\newtheorem{theorem}{Theorem}
\newtheorem{lemma}{Lemma}[section]
\newtheorem{proposition}[lemma]{Proposition}
\newtheorem{corollary}[lemma]{Corollary}
\theoremstyle{definition}
\newtheorem{definition}[lemma]{Definition}
\theoremstyle{remark}
\newtheorem{remark}{Remark}[section]
\newtheorem{example}[remark]{Example}
\newtheorem{discussion}[remark]{Discussion}
\newtheorem{convention}[remark]{Convention}
\newtheorem{observation}[remark]{Observation}
\newtheorem{question}[remark]{Question}
\numberwithin{equation}{section}
\newcommand{\Z}{\mathbb{Z}}
\newcommand{\K}{\mathbb{K}}
\newcommand{\x}{\mathbf{x}}
\newcommand{\bt}{\mathbf{t}}
\newcommand{\balpha}{\bm{\alpha}}
\newcommand{\bbeta}{\bm{\beta}}
\newcommand{\bgamma}{\bm{\gamma}}
\newcommand{\bT}{\mathbf{T}}
\newcommand{\calI}{\mathcal{I}}
\newcommand{\calB}{\mathcal{B}}
\newcommand{\blgsort}{\text{Borel sort}}
\DeclareMathOperator{\gens}{gens}
\newcommand{\grevlex}{\text{\footnotesize{grevlex}}}
\newcommand{\lex}{\text{\footnotesize{lex}}}
\newcommand{\glex}{\text{\footnotesize{glex}}}
\newcommand{\up}{\text{\small{\textsc{up}}}}
\newcommand{\lw}{\text{\small{\textsc{down}}}}
\newcommand{\lft}{\text{\small{\textsc{left}}}}
\DeclareMathOperator{\bs}{\textsc{BorelSort}}
\DeclareMathOperator{\Borel}{Borel}
\begin{document}

\title{Koszul multi-Rees algebras of principal $L$-Borel Ideals}

\author{Michael DiPasquale}
\address{Department of Mathematics, Colorado State University}
\email{michael.dipasquale@colostate.edu}

\author{Babak Jabbar Nezhad$^1$}
\address{Istanbul, Turkey}
\email{babak.jab@gmail.com}
%Need to clarify name connection to Babak Jabarnejad

\subjclass[2020]{Primary 13A30,13P10, 05E40; Secondary 13H10}
\thanks{$\phantom{j}^1$ Babak Jabbar Nezhad has also published under the name Babak Jabarnejad~\cite{Jb18}}

\begin{abstract}
	Given a monomial $m$ in a polynomial ring and a subset $L$ of the variables of the polynomial ring, the principal $L$-Borel ideal generated by $m$ is the ideal generated by all monomials which can be obtained from $m$ by successively replacing variables of $m$ by those which are in $L$ and have smaller index.  Given a collection $\mathcal{I}=\{I_1,\ldots,I_r\}$ where $I_i$ is $L_i$-Borel for $i=1,\ldots,r$ (where the subsets $L_1,\ldots,L_r$ may be different for each ideal), we prove in essence that if the bipartite incidence graph among the subsets $L_1,\ldots,L_r$ is chordal bipartite, then the defining equations of the multi-Rees algebra of $\mathcal{I}$ has a Gr\"obner basis of quadrics with squarefree lead terms under lexicographic order.  Thus the multi-Rees algebra of such a collection of ideals is Koszul, Cohen-Macaulay, and normal.  This significantly generalizes a theorem of Ohsugi and Hibi on Koszul bipartite graphs.  As a corollary we obtain that the multi-Rees algebra of a collection of principal Borel ideals is Koszul.  To prove our main result we use a fiber-wise Gr\"obner basis criterion for the kernel of a toric map and we introduce a modification of Sturmfels' sorting algorithm.
	%This concept is a generalization of the Rees algebra of a principal Borel ideal, where to get the family of ideals, we do Borel moves individually on different subsets of a fixed family of variables. We show that if certain associated graph of these ideals is chordal bipartite, then these multi-Rees algebras are normal, Cohen-Macaulay and Koszul. To do this we introduce an algorithm and using this algorithm we show that an explicit family of binomials form a Gr\"{o}bner basis with lexicographic order for defining equations of the multi-Rees algebra. This generalizes results of De Negri, Ohsugi-Hibi and Sosa.
\end{abstract}

\maketitle

%Possibilities for reviewers
%Emanuela De Negri, Hidefumi Ohsugi, J\"urgen Herzog, Takayuki Hibi, Jonah Blasiak, Jay Schweig

%Introduction
\section{Introduction}
The Rees algebra of an ideal is a central object of study in commutative algebra.  Geometrically, the Rees algebra of an ideal in a polynomial ring is the coordinate ring of the blowup of projective space along the scheme defined by the ideal. Algebraically, the Rees algebra of an ideal encodes the behavior of all its powers simultaneously.  Similarly, the \textit{multi-Rees} algebra of a family of ideals $I_1,\ldots, I_r$ of a polynomial ring $R$ encodes (geometrically) the coordinate ring of the blowup along the subschemes defined by $I_1,\ldots,I_r$ and (algebraically) the behaviour of all products that can be formed among the ideals $I_1,\ldots,I_r$.  The multi-Rees algebra of $I_1,\ldots,I_r$ is a special case of the Rees algebra of a module (see~\cite{SimisUlrichVasconcelos03} and~\cite{EisenbudHunekeUlrich03}).  Concretely, the multi-Rees algebra of the ideals $I_1,\ldots,I_r$ in a polynomial ring $R=\K[x_1,\ldots,x_n]$ is defined as:
\[
R[\calI\bt]=R[I_1t_1,\ldots,I_rt_r]:=\oplus_{a_1+\cdots+a_r\ge 0} I_1^{a_1}\cdots I_r^{a_r}t_1^{a_1}\cdots t_r^{a_r},
\]
where $\bt=\{t_1,\ldots,t_r\}$ are auxiliary variables.  A central problem for Rees and multi-Rees algebras is to describe their defining equations -- that is, to find a polynomial ring $S$ and an ideal $J=J_{R[\calI\bt]}\subset S$ so that $S/J\cong R[\calI\bt]$.  While this problem has been studied mostly for Rees algebras, there is a growing literature on the defining equations of multi-Rees algebras~\cite{R99,LP14,Sosa14,Jb18,BC17b,CLS19}.  
%We remark that very often the defining ideals of the \textit{fiber ring} of the Rees algebra or the \textit{multi-fiber ring} of the multi-Rees algebra are just as interesting; in many cases of interest these good properties pass from the fiber or multi-fiber ring to the the Rees or multi-Rees algebra.  In particular, if $I$ is an ideal of $R$ generated in a single degree, then the fiber ring of the Rees algebra of $I$ is the sub-algebra of $R$ generated by the generators of $I$.  If $I$ is a monomial ideal, then this is called the \textit{toric ring} of $I$.

In this paper we study the Koszul property of the multi-Rees algebra of certain Borel ideals (these are also called \textit{strongly stable} ideals in the literature).  A graded ring $R\cong \bigoplus_{i=0}^\infty R_i$ over a field $\K=R_0$ with irrelevant ideal $R_+=\bigoplus_{i=1}^\infty R_i$ is Koszul if $R$ has a linear resolution over its residue field $R/R_+\cong \K$.  Koszul rings have good homological properties which closely mirror polynomial rings; see~\cite{CDR13} for a survey.  One way to establish that a ring is Koszul is to show that it is presented as a quotient $S/J$ of a polynomial ring $S$ where the ideal $J$ has a Gr\"obner basis of quadrics~\cite[Section~3.1]{CDR13}.  

Our main result in this paper, Theorem~\ref{thm:MultiSink}, is that the defining equations of the multi-Rees algebra of certain collections of what we call \textit{principal $L$-Borel} ideals have a Gr\"obner basis of quadrics with squarefree lead terms.  In~\cite[Page~3]{BC17}, Bruns and Conca write that it is `very likely' that the multi-fiber ring of the multi-Rees algebra of principal Borel ideals is defined by a Gr\"obner basis of quadrics.  This is a particular case of our main result -- see Corollary~\ref{cor:PrincipalBorel}.

Our proof of Theorem~\ref{thm:MultiSink} involves several ingredients.  The first is a fiber-wise criterion for a set of binomials to either generate or form a Gr\"obner basis for the kernel of a toric map.  Variations on this criterion first appear in~\cite{Blasiak08} and are further developed in~\cite{Schweig11,DFMSS19} for particular toric maps.  In Section~\ref{sec:ToricMaps} we give a careful statement and proof of this criterion for arbitrary toric maps, which we then use throughout the paper.  The second ingredient is a modification of Sturmfels' sorting algorithm~\cite[Chapter~14]{S96}.  In~\cite{DN99} De Negri shows that the resulting \textit{sorting order} yields a Gr\"obner basis of quadrics for the defining equations of the toric ring of a principal Borel ideal.  This argument is difficult to extend to multi-Rees algebras because the sorting order is sensitive to the ideal chosen; see~\cite{Sosa14} where Sosa uses the sorting order to establish the Koszul property for the multi-Rees algebra of certain principal Borel ideals.  We give a modified sorting algorithm in Section~\ref{sec:Principal} (Algorithm~\ref{alg:BS}) which produces the \textit{lexicographically} least monomial in each monomial fiber of the toric map associated to a principal Borel ideal.  Using the fiber-wise Gr\"obner basis criterion developed in Section~\ref{sec:ToricMaps}, we conclude that the defining equations of the toric ring of a principal Borel ideal has a Gr\"obner basis of quadrics with respect to \textit{lexicographic} order.  In Section~\ref{sec:MultiRees} we build on this to show that the multi-Rees algebra of certain collections of principal $L$-Borel ideals have a Gr\"obner basis of quadrics with respect to lexicographic order also.

We now explain what we mean by principal $L$-Borel ideals and give the idea for which collections of principal $L$-Borel ideals have a Koszul multi-Rees algebra.
%More precisely, Algorithm~\ref{alg:BS} produces the lexicographically smallest monomial in the fiber of the associated toric map in a particular multi-degree.
We have taken the notation of an $L$-Borel ideal from~\cite{FMS13}, where $Q$-Borel ideals are introduced for a partially ordered set $Q$ on the underlying variables of the polynomial ring.  An ideal $I\subset R=\K[x_1,\ldots,x_n]$ is Borel (also \textit{strongly stable} in the literature) if, whenever $m\in I$ is a monomial and $x_j$ divides $m$, $\frac{x_i}{x_j}m\in I$ for any $i<j$.  We call $\frac{x_i}{x_j}m$ a \textit{Borel move} on $m$.  The ideal $I$ is \textit{principal Borel} if its generators can be obtained from Borel moves on a single monomial, which we call the Borel generator of $I$.  Now suppose that $L$ is a \textit{linear ordering} on a subset of $\{x_1,\ldots,x_n\}$ which respects the usual lexicographic ordering of the variables.  For instance, if $n=5$, we could write $x_1>_L x_3>_L x_5$, while $x_2$ and $x_4$ are not compared to any other variable by $L$.  Then a monomial ideal $I$ is $L$-Borel if, for any monomial $m\in I$ so that $x_j$ divides $m$, and any variable $x_i$ so that $x_i>_L x_j$, $\frac{x_i}{x_j}m\in I$.  In this case we call $\frac{x_i}{x_j}m$ an $L$-Borel move.  The ideal $I$ is a principal $L$-Borel ideal if its generators are obtained by $L$-Borel moves from a single monomial, which we call the $L$-Borel generator of $I$.  For example, if $L$ is the linear ordering on $\{x_1,\ldots,x_5\}$ considered above, $\langle x_1,x_3,x_5\rangle$ is a principal $L$-Borel ideal with $L$-Borel generator $x_5$.

Now suppose that we are given a collection of monomial ideals $\calI=\{I_1,\ldots,I_r\}$ so that $I_i$ is principal $L_i$-Borel with respect to a linear poset $L_i$ for $i=1,\ldots,r$. ($L_1,\ldots, L_r$ are not necessarily the same.)  We define a bipartite incidence graph $G(\calI)$ associated to $\calI$ as follows.  The vertices of $G(\calI)$ are the variables $x_1,\ldots,x_n$ of the polynomial ring and the auxiliary variables $t_1,\ldots,t_r$ of the multi-Rees algebra, and $t_i$ is connected by an edge to $x_j$ if and only if $x_j$ is comparable to another variable by the linear poset $L_j$ (our definition for $G(\calI)$ in Section~\ref{sec:LBorel} is slightly more nuanced, but this is sufficient for now).  This graph records which Borel moves are \textit{allowable} for each ideal of $\calI$.  In essence, our main result (Theorem~\ref{thm:MultiSink}) is that the multi-Rees algebra $R[\calI\bt]$ is Koszul if the graph $G(\calI)$ is chordal bipartite (the actual criterion is slightly more complicated -- we direct the reader to Sections~\ref{sec:LBorel} and~\ref{sec:MultiRees} for the details).  More precisely, Theorem~\ref{thm:MultiSink} shows that Algorithm~\ref{alg:BS} can be used successively in a greedy fashion (according to an appropriate ordering of the vertices of the incidence graph $G(\mathcal{I})$) to produce the lexicographically least monomial in the fiber over a monomial in the toric map naturally associated to the multi-Rees algebra of $\calI$.  Our main result is inspired by (and significantly extends) the result of Ohsugi and Hibi that the toric ring of the edge ideal of a graph is Koszul if the graph is chordal bipartite~\cite{OH99}.

%It is natural to suspect that algebras which are Koszul, Cohen-Macaulay, and normal are presented by an ideal which has a Gr\"obner basis of quadrics with squarefree leading terms.  This is precisely what we show.  In order to obtain this Gr\"obner basis we return to the Rees algebra of a principal Borel ideal and show that its Rees algebra has a Gr\"obner basis of quadrics in \textit{lexicographic} order.  In~\cite{DN99}, De Negri uses Sturmfel's \textit{sorting order} from~\cite{S96} -- in general the sorting order depends on the ideal and coincides with neither lexicographic nor reverse lexicographic order.  We introduce a modified sorting algorithm, called $\bs$, which produces leading terms under lexicographic instead of sorting order.  This generalizes in a straightforward manner to multi-Rees algebras.

The paper is organized as follows.  In Section~\ref{sec:ToricMaps} we formalize a criterion originating in~\cite{Blasiak08} and appearing in~\cite{Schweig11,DFMSS19} for a set of binomials to be a generating set (respectively, Gr\"obner basis) for the kernel of a toric map.  We then describe the multi-Rees algebra of monomial ideals as a toric map.  In Section~\ref{sec:Borel} we recall and prove some relevant properties of principal Borel ideals.  We then proceed in Section~\ref{sec:Principal} to describe the $\bs$ algorithm and prove that it produces squarefree quadratic leading terms in lexicographic order, recovering De Negri's result~\cite{DN99} using lexicographic order instead of sorting order.  In Section~\ref{sec:LBorel} we introduce principal $L$-Borel ideals and define the notion of an $\mathsf{L}$-free ordering of a collection of principal $L$-Borel ideals (this is one way to encode the chordal bipartite property of the incidence graph).  In Section~\ref{sec:MultiRees} we prove our main result - that an $\mathsf{L}$-free collection of principal $L$-Borel ideals has a Koszul multi-Rees algebra.  We conclude in Section~\ref{sec:Conclusion} with some related remarks and questions for further research.  We close this introduction with a simple example illustrating our main result.

\begin{example}\label{ex:intro}
	Consider the polynomial ring $R=\K[x_1,x_2,x_3,x_4]$.  Let $L_1=L_2$ be the linear poset ordering $\{x_3,x_4\}$ with respect to decreasing subscripts (so $x_3>_{L_1} x_4$) and let $L_3$ and $L_4$ be the linear posets likewise ordering $\{x_2,x_3,x_4\}$ and $\{x_1,x_2,x_3\}$.  Put $\calI=\{I_1,I_2,I_3,I_4\}$ where $I_1=\langle x_3^3,x_3^2x_4,x_3x_4^2\rangle$, $I_2=\langle x_3^2,x_3x_4\rangle$, $I_3=\langle x_2^3,x_2^2x_3,x_2^2x_4,x_2x_3^2,x_2x_3x_4\rangle$, and $I_4=\langle x_1^2,x_1x_2,x_1x_3,x_2^2,x_2x_3,x_3^2\rangle$.  Then $I_i$ is principal $L_i$-Borel for $i=1,2,3,4$ with principal $L_i$-Borel generators $x_3x_4^2$, $x_3x_4$, $x_2x_3x_4$, and $x_3^2$, respectively.  The bipartite incidence graph $G(\calI)$ is shown in Figure~\ref{fig:bipartiteincidencegraph}.  The bi-adjacency matrix of $G(\calI)$ is
	\[
	\kbordermatrix{ & t_1 & t_2 & t_3 & t_4 \\
		x_1 & 0 & 0 & 0 & 1\\ 
		x_2 & 0 & 0 & 1 & 1\\
		x_3 & 1 & 1 & 1 & 1\\
		x_4 & 1 & 1 & 1 & 0
	}
	\]
	Since the bi-adjacency matrix has no sub-matrix of the form $\left[ \begin{array}{cc} 1 & 0\\ 1 & 1\end{array} \right]$ (we will say $\calI$ is $\mathsf{L}$-free in Section~\ref{sec:MultiRees}), it follows that $G(\calI)$ is chordal bipartite (see Theorem~\ref{thm:ChordalBipartite}).  Hence Theorem~\ref{thm:MultiSink} implies that the multi-Rees algebra $R[\calI\bt]$ is Koszul.
	\begin{figure}
		\centering
		\begin{tikzpicture}
		\node (x1) at (0,4) {$x_1$};
		\node (x2) at (0,3) {$x_2$};
		\node (x3) at (0,2) {$x_3$};
		\node (x4) at (0,1) {$x_4$};
		
		\node (t1) at (2,4) {$t_1$};
		\node (t2) at (2,3) {$t_2$};
		\node (t3) at (2,2) {$t_3$};
		\node (t4) at (2,1) {$t_4$};
		
		\draw (t1)--(x3) (t1)--(x4) (t2)--(x3) (t2)--(x4) (t3)--(x2) (t3)--(x3) (t3)--(x4) (t4)--(x1) (t4)--(x2) (t4)--(x3);
		\end{tikzpicture}
		\caption{The bipartite incidence graph for Example~\ref{ex:intro}}
		\label{fig:bipartiteincidencegraph}
	\end{figure}
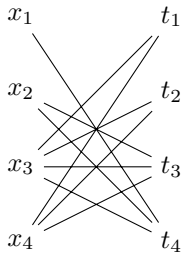
\end{example}

\section{Toric maps and toric fiber graphs}\label{sec:ToricMaps}
In this section we develop a Gr\"obner basis criterion for the kernel of a toric map which has appeared in~\cite{Blasiak08,Schweig11,DFMSS19} for particular choices of toric map.  Throughout we write $R$ for the polynomial ring $\K[x_1,\ldots,x_n]$ over a field $\K$.  If $I$ is a monomial ideal, we write $\gens(I)$ for the unique set of minimal generators of $I$.  Standard references for toric maps and toric ideals are~\cite{MS05} and~\cite{HHO18}.

\begin{definition}
	Let $G$ be a finite collection of monomials of positive degree in the polynomial ring $R$.  We write $\K[G]$ for the subring of $R$ generated by the monomials of $G$, which we call the \textit{toric ring} of $G$.  Let $S$ be the polynomial ring $S:=\K[T_m:m\in G]$ with an indeterminate for every monomial of $G$.  The \textit{toric map} associated to $G$ is the ring map $\phi_G: S\to R$ defined on variables by $\phi_G(T_m)=m$.  The image of $\phi_G$ is clearly $\K[G]$.  We write $J_{\K[G]}$ for the kernel of $\phi_G$; we call this the \textit{toric ideal} of $G$.  This is the defining ideal of the subring $\K[G]$ presented as a quotient of $S$.
	
	If $I$ is a monomial ideal of $R$ we write $\K[I]$ for $\K[\gens(I)]$; we call $\K[I]$ the toric ring of $I$.  In this case we write $\phi_I$ instead of $\phi_{\gens(I)}$ for the toric map.  Likewise we write $J_{\K[I]}$ for the kernel of $\phi_I$; we also call $J_{\K[I]}$ the toric ideal of $I$.
\end{definition}

\begin{remark}
	For toric ideals it is standard to replace $R$ by the Laurent polynomial ring and allow $G$ to have monomials with negative exponents (see~\cite[Chapter~3]{HHO18}).  We will not consider toric ideals in this generality.  Alternatively, we consider only toric ideals of affine monoids whose convex hull is \textit{pointed}.  See~\cite[Chapter~8]{MS05} for more details.
\end{remark}

Let $G$ be a finite collection of monomials of positive degree in the polynomial ring $R$.  Given $\bgamma=(\gamma_m)\in\Z_{\ge 0}^{G}$, we write $\bT^{\bgamma}$ for $\prod_{m\in G}T_m^{\gamma_m}$, where $T_m$ is a variable of the polynomial ring $S=\K[T_m:m\in G]$.  We put a multigrading on $S$ by the monomials of $\K[G]$ as follows.  If $\mu\in \K[G]$, then
\[
S_\mu=\mbox{span}_\K\{\bT^{\bgamma}: \phi_G(\bT^{\bgamma})=\mu\}.
\]

%Let $r=\sum_{i=1}^k \#G_i$, which is the number of variables in $\bT$.  For a monomial $\mu=\x^{\balpha}\bt^{\bbeta}\in \K[\x,\bt]$, we define a multi-grading on $\K[\x,\bT]$ by the monomials of $\K[\x,\bt]$ as follows:
%\[
%\K[\x,\bT]_\mu=\mbox{span}_\K\{\x^\alpha\bT^\gamma: \phi(\x^\alpha\bT^\gamma)=\mu\}.
%\]
We say an $S$-module $M$ is \textit{graded} by $\K[G]$ if $M$ decomposes as a direct sum of $\K$-vector spaces as
\[
M=\bigoplus_{\mu\in\K[G]} M_\mu
\]
and, for any $\mu,\nu\in \K[G]$, if $m\in M_\mu$ and $f\in S_{\nu}$, then $f\cdot m\in M_{\mu\nu}$ (we write this as a product since $\mu$ and $\nu$ are monomials).  It is well-known that the toric ideal $J=J_{K[G]}$ is graded in this way (see~\cite[Chapter~8]{MS05}), and moreover we can precisely identify the graded pieces of $J$: if $\mu\in \K[G]$ is a monomial, then
\[
\begin{array}{rl}
J_\mu & =\mbox{span}_\K\{\bT^{\bgamma_1}-\bT^{\bgamma_2}: \bT^{\bgamma_1},\bT^{\bgamma_2}\in S_\mu\}\\[10 pt]
& =\mbox{span}_\K\{\bT^{\bgamma_1}-\bT^{\bgamma_2}: \phi_G(\bT^{\bgamma_1})=\phi_G(\bT^{\bgamma_2})=\mu\}.
\end{array}
\]
That is, $J$ is a binomial prime ideal, also known as a toric ideal.  See~\cite[Chapter~7]{MS05} or~\cite[Chapter~3]{HHO18} for additional details.

We now introduce a combinatorial device from~\cite{Blasiak08} and~\cite{Schweig11} (see also~\cite{DFMSS19}), which we call the \textit{fiber graph} of the toric map $\phi_G$ at the monomial $\mu$.

\begin{definition}\label{def:FiberGraph}
	Let $G$ be a finite collection of monomials of positive degree in the polynomial ring $R$, $J_{\K[G]}$ the toric ideal of $G$, and $\calB\subset J_{\K[G]}$ a finite collection of binomials.  The \textit{fiber graph of $\phi_G$ at $\mu$ with respect to $\calB$} is the graph $\Gamma_{\mu,\calB}$ whose vertices are monomials $\bT^{\bgamma}\in S_\mu$ with an edge connecting $\bT^{\bgamma},\bT^{\bgamma'}\in S_\mu$ if $\bT^{\bgamma}-\bT^{\bgamma'}$ is divisible by a binomial from $\calB$.
	
	If, moreover, $\prec$ is a monomial order on $S$, then $\vec{\Gamma}_{\mu,\calB}$ is the graph $\Gamma_{\mu,\calB}$ with edges directed from the \textit{larger} monomial to the \textit{smaller}.  That is, if $\bT^{\bgamma},\bT^{\bgamma'}\in S_\mu$ are connected by an edge in $\Gamma_{\mu,\calB}$ and $\bT^{\bgamma'}\prec\bT^{\bgamma}$, then we get the directed edge $\bT^{\bgamma}\to \bT^{\bgamma'}$.
\end{definition}

\begin{remark}
	We suppress the collection $G$ of monomials of $R$ in the notation for $\Gamma_{\mu,\calB}$, assuming that the underlying toric map is understood from context.
\end{remark}

The significance of these fiber graphs comes from the following proposition.  See also~\cite[Proposition~4.5]{DFMSS19}.

\begin{proposition}\label{prop:Graph}
	Let $\phi_G:S\to R$ be a toric map and $\calB$ a collection of binomials from the toric ideal $J=J_{\K[G]}$.
	%If every nonempty graph $\Gamma_{\mu,\calB}$ is connected then the binomials in $\calB$ generate $J=J_{\K[G]}$.  Furthermore, if $\K[\x,\bT]$ is equipped with a monomial order $\prec$ and every nonempty graph $\vec{\Gamma}_{\mu,\calB}$ has a unique sink, then the binomials of $\calB$ form a Gr\"obner basis of $T(\calI)$.
	The following are equivalent:
	\begin{enumerate}
		\item The binomials in $\calB$ generate $J$.
		\item The graph $\Gamma_{\mu,\calB}$ is either empty or connected for every $\mu\in R$.
	\end{enumerate}
	If $S$ is equipped with a monomial order $\prec$, then the following are equivalent:
	\begin{enumerate}
		\item The binomials in $\calB$ form a Gr\"obner basis for $J$ under $\prec$.
		\item The directed graph $\vec{\Gamma}_{\mu,\calB}$ is either empty or has a unique sink for every $\mu\in R$.
	\end{enumerate}
\end{proposition}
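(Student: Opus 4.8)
The plan is to reduce both equivalences to the $\K[G]$-grading on $J=J_{\K[G]}$ and to test everything one multidegree $\mu$ at a time. Recall from the discussion above that $J=\bigoplus_\mu J_\mu$ with $J_\mu$ the $\K$-span of the binomials $\bT^{\bgamma_1}-\bT^{\bgamma_2}$ for $\bT^{\bgamma_1},\bT^{\bgamma_2}\in S_\mu$; equivalently $J_\mu=\{\sum_i a_i\bT^{\bgamma_i}\in S_\mu:\sum_i a_i=0\}$, the ``zero total augmentation'' subspace of the $\K$-vector space $S_\mu$. Every binomial of $\calB$, being an element of the toric ideal, has its two monomials in the same $\K[G]$-degree and so is $\K[G]$-homogeneous; hence $\langle\calB\rangle$ is a graded ideal and $\langle\calB\rangle=J$ iff $\langle\calB\rangle_\mu=J_\mu$ for all $\mu$, and likewise $\calB$ is a Gr\"obner basis iff $\langle\operatorname{in}_\prec(\calB)\rangle=\operatorname{in}_\prec(J)$, which can again be tested degree by degree. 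For $\mu\in R$ not of the form $\phi_G(\bT^{\bgamma})$ the space $S_\mu$ is zero and both graphs are empty, so only the remaining $\mu$ matter.

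For the first equivalence I would compare $\langle\calB\rangle_\mu$ with the subspace $W_\mu\subseteq S_\mu$ spanned by the differences $\bT^{\bgamma}-\bT^{\bgamma'}$ with $\bT^{\bgamma},\bT^{\bgamma'}$ in a common connected component of $\Gamma_{\mu,\calB}$. On one hand $\langle\calB\rangle_\mu$ is spanned by the products $m\cdot b$ with $m$ a monomial of $S$ and $b=\bT^{\balpha}-\bT^{\bbeta}\in\calB$ such that $\phi_G(m\bT^{\balpha})=\mu$; the two monomials $m\bT^{\balpha},m\bT^{\bbeta}$ then span an edge of $\Gamma_{\mu,\calB}$, so $mb\in W_\mu$ and $\langle\calB\rangle_\mu\subseteq W_\mu$. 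On the other hand a path in $\Gamma_{\mu,\calB}$ from $\bT^{\bgamma}$ to $\bT^{\bgamma'}$ gives a telescoping expression of $\bT^{\bgamma}-\bT^{\bgamma'}$ as a sum of $\K[G]$-multiples of binomials of $\calB$, so $W_\mu\subseteq\langle\calB\rangle_\mu$; hence $\langle\calB\rangle_\mu=W_\mu$. Finally $W_\mu$ is exactly the set of elements of $S_\mu$ whose augmentation vanishes on each connected component of $\Gamma_{\mu,\calB}$, whereas $J_\mu$ asks only that the total augmentation vanish; the two coincide precisely when $\Gamma_{\mu,\calB}$ has at most one component, i.e.\ is empty or connected. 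Quantifying over $\mu$ gives the equivalence.

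For the Gr\"obner equivalence I would first establish two local statements. Since $\prec$ is a term order, an out-edge $\bT^{\bgamma}\to\bT^{\bgamma'}$ of $\vec{\Gamma}_{\mu,\calB}$ forces $\operatorname{in}_\prec(b)\mid\bT^{\bgamma}$ for the binomial $b\in\calB$ dividing $\bT^{\bgamma}-\bT^{\bgamma'}$ (take leading terms in $\bT^{\bgamma}-\bT^{\bgamma'}=q\cdot b$); conversely if $\operatorname{in}_\prec(b)\mid\bT^{\bgamma}$, writing $\bT^{\bgamma}=m\operatorname{in}_\prec(b)$ produces an out-edge of $\bT^{\bgamma}$ in $\vec{\Gamma}_{\mu,\calB}$ with $\mu=\phi_G(\bT^{\bgamma})$. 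Thus a monomial of $S_\mu$ is a sink of $\vec{\Gamma}_{\mu,\calB}$ iff it lies outside $\langle\operatorname{in}_\prec(\calB)\rangle$. Next, from the augmentation description a nonzero element of $J_\mu$ has at least two terms, so its $\prec$-leading monomial is never the $\prec$-least element of $S_\mu$; hence $\operatorname{in}_\prec(J)\cap S_\mu$ is exactly the set of non-$\prec$-minimal monomials of $S_\mu$, and the single monomial of $S_\mu$ outside $\operatorname{in}_\prec(J)$ is its $\prec$-minimum, which is automatically a sink of $\vec{\Gamma}_{\mu,\calB}$ (it has no smaller vertex in its fiber). Combining the two: $\langle\operatorname{in}_\prec(\calB)\rangle=\operatorname{in}_\prec(J)$ holds iff for every $\mu$ the sinks of $\vec{\Gamma}_{\mu,\calB}$ coincide with the monomials of $S_\mu$ outside $\operatorname{in}_\prec(J)$, i.e.\ iff the $\prec$-minimum of $S_\mu$ is its only sink; since that minimum is always a sink, this is precisely the assertion that $\vec{\Gamma}_{\mu,\calB}$ is empty or has a unique sink.

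The graded decompositions and the telescoping computation are routine bookkeeping; the step with genuine content --- and the one I expect to be the main point to get right --- is the fiberwise identification of $\operatorname{in}_\prec(J)$ with the non-$\prec$-minimal monomials, which simultaneously uses that $\prec$ is a term order and the explicit form of $J_\mu$. Once that, together with the parallel description ``sinks $=$ complement of $\langle\operatorname{in}_\prec(\calB)\rangle$'' (and its undirected analogue for the first part), is in hand, both equivalences follow formally.
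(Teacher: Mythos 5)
Your proof is correct and, for the first equivalence, takes a genuinely different route from the paper.  For \emph{``$\calB$ generates $J$ $\Rightarrow$ $\Gamma_{\mu,\calB}$ connected''} the paper expands a binomial $\bT^{\bgamma}-\bT^{\bgamma'}$ as $\sum_i\epsilon_i m_i(A_i-B_i)$ with distinct summands (citing \cite[Lemma~3.8]{HHO18}), interprets the summands as edges of a subgraph $H$, and observes that $\bT^{\bgamma}$ and $\bT^{\bgamma'}$ are the only odd-degree vertices of $H$, hence lie in a common component.  You instead identify $\langle\calB\rangle_\mu=W_\mu$ as the kernel of the componentwise augmentation and $J_\mu$ as the kernel of the total augmentation, so that the inclusion $W_\mu\subseteq J_\mu$ is an equality precisely when there is at most one component; this replaces the combinatorial parity argument by a clean dimension count and dispenses with the appeal to \cite{HHO18}.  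For the Gr\"obner equivalence your argument is in the same spirit as the paper's --- both boil down to ``sinks of $\vec\Gamma_{\mu,\calB}$ are exactly the monomials of $S_\mu$ not in $\langle\operatorname{in}_\prec(\calB)\rangle$, and $\operatorname{in}_\prec(J)_\mu$ misses exactly the $\prec$-minimum'' --- but your organization is tighter: by working entirely with the graded pieces of the two monomial ideals, you avoid the paper's auxiliary path-chasing/acyclicity argument showing that the unique sink is reachable from every vertex, which the paper uses to certify the sink is the $\prec$-least monomial but which your description of $\operatorname{in}_\prec(J)_\mu$ makes unnecessary.  One small point worth spelling out in a final write-up: in the local statement ``an out-edge forces $\operatorname{in}_\prec(b)\mid\bT^{\bgamma}$,'' you implicitly use that if a difference of two monomials is divisible by a binomial $b=A-B$ then the quotient is $\pm$ a monomial and the sign is pinned down by $\prec$; this is routine but deserves a sentence.
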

\begin{proof}
	%First suppose that the binomials in $\calB$ generate $T(\calI)$, and suppose that $\gamma,\gamma'\in T(\calI)_\mu$.  Then $\gamma-\gamma'\in T(\calI)$ and hence
	%\[
	%\gamma-\gamma'=\sum_{i=1}^k f_i(A_i-B_i),
	%\]
	%where $A_i-B_i\in\calB$ and $f_i\in K[\x,\bT]$ is a polynomial for $i=1,\ldots,k$.  Expanding each polynomial $f_i$ as a sum of monomials, notice that there is an integer $k'$, possibly different $k$, so that
	%\[
	%\gamma-\gamma'=\sum_{i=1}^{k'} m_i(A_i-B_i),
	%\]
	%where each $m_i$ is a monomial with leading coefficient of one.  Clearly we may assume that $m_iA_i,m_iB_i\in K[\x,\bT]_\mu$ for each $i=1,\ldots,k'$.  Thus each term in the sum corresponds to an edge in $\Gamma_{\mu,\calB}$.
	First, assume every nonempty graph $\Gamma_{\mu,\calB}$ is connected.  Since $J$ is graded by $\K[G]$, it suffices to show that every binomial of the form $\bT^{\bgamma}-\bT^{\bgamma'}$, where $\bT^{\bgamma},\bT^{\bgamma'}\in S_\mu$, can be written as a combination of binomials from $\calB$.  Since $\Gamma_{\mu,\calB}$ is connected, there is a path $P$ from $\bT^{\bgamma}$ to $\bT^{\bgamma'}$ in $\Gamma_{\mu,\calB}$.  That is,
	\[
	\bT^{\bgamma}-\bT^{\bgamma'}=\sum_{i=1}^k (\bT^{\bgamma_i}-\bT^{\bgamma'_i}),
	\]
	where $\bT^{\bgamma_i}$ and $\bT^{\bgamma'_i}$ are endpoints of the $i^{\mbox{\footnotesize{th}}}$ edge in this path (thus $\bT^{\bgamma'_i}=\bT^{\bgamma_{i+1}}$ for $i=1,\ldots,k-1$).  By definition of $\Gamma_{\mu,\calB}$, $\bT^{\bgamma_i}-\bT^{\bgamma'_i}=\epsilon_i m_i(A_i-B_i)$ for some $\epsilon_i\in\{-1,1\}$, some binomial $A_i-B_i\in\calB$, and some monomial $m_i\in R$.  So $\bT^{\bgamma}-\bT^{\bgamma'}=\sum_{i=1}^k \epsilon_i m_i(A_i-B_i)$, as desired.
	
	Now assume that the set of binomials $\calB$ generates $J$.  Let $\bT^{\bgamma}-\bT^{\bgamma'}\in J_\mu$, so $\bT^{\bgamma},\bT^{\bgamma'}$ are vertices of $\Gamma_{\mu,\calB}$. Then, by~\cite[Lemma~3.8]{HHO18},
	\begin{equation}\label{eq:binomexp}
	\bT^{\bgamma}-\bT^{\bgamma'}=\sum_{i=1}^k \epsilon_i m_i(A_i-B_i),
	\end{equation}
	where $\epsilon_i\in\{-1,1\}$, $m_i$ is a monomial in $S$, $A_i-B_i\in\calB$ for $1\le i\le k$, and $m_i(A_i-B_i)\neq m_j(A_j-B_j) \mbox{ for any } 1\le i,j\le k$.  It is clear that $m_iA_i$, $m_iB_i$ are adjacent vertices of $\Gamma_{\mu,\calB}$ for $i=1,\ldots,k$.  Thus each term in the sum on the left-hand side of~\eqref{eq:binomexp} yields an edge of $\Gamma_{\calB,\mu}$, and no edge is repeated since $m_i(A_i-B_i)\neq m_j(A_j-B_j) \mbox{ for any } 1\le i,j\le k$.  We consider the subgraph $H$ consisting of all of these edges.  Since all of the monomials $m_iA_i,m_iB_i$ must cancel in~\eqref{eq:binomexp}, every vertex of $H$ except $\bT^{\bgamma}$ and $\bT^{\bgamma'}$ must have even degree.  Since $\bT^{\gamma}$ and $\bT^{\bgamma'}$ are the only vertices of $H$ with odd degree, they must be in the same connected component of $H$.  Thus $\Gamma_{\mu,\calB}$ is connected.
	
	Now suppose that $S$ is equipped with a monomial order $\prec$ and $\vec{\Gamma}_{\mu,\calB}$ has a unique sink for every $\mu$ (whenever $\vec{\Gamma}_{\mu,\calB}$ is non-empty).  We claim $\calB$ is a Gr\"obner basis for $J$.  Fix a monomial $\mu\in R$.  Suppose $\bT^{\gamma'}$ is the unique sink of $\Gamma_{\mu,\calB}$.  We first claim every monomial $\bT^{\bgamma}\neq \bT^{\bgamma'}$ is connected to $\bT^{\gamma'}$ by a directed path.  To do this, we inductively build a path starting at $\bT^{\bgamma}=\bT^{\bgamma_1}$ as follows.  Since $\bT^{\bgamma}$ is not a sink, we can choose an outgoing edge of $\bT^{\bgamma}$; call the terminal vertex of this edge $\bT^{\bgamma_2}$.  If $\bT^{\bgamma_2}=\bT^{\bgamma'}$, we are done.  Otherwise, $\bT^{\bgamma_2}$ is not a sink, so we pick an outgoing edge of $\bT^{\bgamma_2}$ and call the terminal vertex of this edge $\bT^{\bgamma_3}$.  Continuing in this way we will either terminate at the sink $\bT^{\bgamma'}$ or necessarily repeat a vertex at some point.  If we repeat a vertex, we get a directed cycle, which is impossible since $\prec$ is a monomial order (hence $\vec{\Gamma}_{\mu,\calB}$ is directed acyclic).  So this process must result in a path terminating at the unique sink.  This also implies that the unique sink $\bT^{\bgamma'}$ is the smallest monomial in $S_\mu$ under $\prec$.
	
	Now, to prove that $\calB$ is a Gr\"obner basis of $J$, it suffices to prove that the lead term of every polynomial $f\in J$ is divisible by the lead term of some binomial in $\calB$.  Since $J$ is graded, it suffices to prove that the lead term of every binomial $\bT^{\bgamma}-\bT^{\gamma'}\in J_\mu$ is divisible by the lead term of some binomial in $\calB$.  From the definition of $\vec{\Gamma}_{\mu,\calB}$, the initial vertex of every directed edge corresponds to a lead term of a binomial.  The condition that $\vec{\Gamma}_{\mu,\calB}$ has a unique sink means that every monomial $\bT^{\bgamma}\in\vec{\Gamma}_{\mu,\calB}$ which is different from the unique sink is a lead term of some binomial.  Also, the argument in the previous paragraph shows that the unique sink will never be the lead term of any binomial.  In other words, the lead term ideal of $J$ in degree $\mu$ consists of every vertex of $\vec{\Gamma}_{\mu,\calB}$ besides the unique sink.  Now we show that every such monomial is divisible by the lead term of some binomial in $\calB$.  So suppose that $\bT^{\bgamma}\in \vec{\Gamma}_{\mu,\calB}$ is not the unique sink of $\vec{\Gamma}_{\mu,\calB}$.  Since it is not the unique sink, there is a directed edge $\bT^{\bgamma}\to\bT^{\bgamma'}$ in $\vec{\Gamma}_{\mu,\calB}$, hence $\bT^{\bgamma'}\prec \bT^{\bgamma}$ and $\bT^{\bgamma}-\bT^{\bgamma'}=m(A-B)\in J_\mu$ for some binomial $A-B\in\calB$ and monomial $m\in S$, where we assume $B\prec A$.  Since $B\prec A$ and $\prec$ is a monomial order, $mB\prec mA$, so $mA=\bT^{\bgamma}$.  So $\bT^{\bgamma}$ is divisible by the lead term of a binomial in $\calB$, proving the claim.
	
	Finally, suppose that the binomials in $\calB$ form a Gr\"obner basis for $J$.  Suppose for a contradiction that for some $\mu$, $\vec{\Gamma}_{\mu,\calB}$ is non-empty and does not have a unique sink.  That is, suppose that $\bT^{\bgamma},\bT^{\bgamma'}\in R_\mu$ and both are sinks in $\vec{\Gamma}_{\mu,\calB}$.  Since both are sinks, it follows that neither of $\bT^{\bgamma},\bT^{\bgamma'}$ is divisible by the lead term of any binomial in $\calB$.  But then the binomial $\bT^{\bgamma}-\bT^{\bgamma'}\in J_\mu$ and its lead term is not divisible by any binomial of $\calB$, contradicting that $\calB$ is a Gr\"obner basis of $J$.
\end{proof}

\subsection{Multi-Rees algebras}\label{ss:multirees}
In this section we review the toric maps which are the main focus of this paper.  A robust development of the details this section may be found in~\cite{CLS19}, so we will be brief.
%the multi-Rees algebra of a collection of monomial ideals and its quotient, the multi-fiber ring.
Let $\K$ be a field.  Write $\x$ for the set of variables $\{x_1,\ldots,x_n\}$ and $\K[\x]$ for the polynomial ring $\K[x_1,\ldots,x_n]$.  If $\balpha=(\alpha_1,\ldots,\alpha_n)\in\Z_{\ge 0}^n$, we write $\x^{\balpha}$ for $x_1^{\alpha_1}\cdots x_n^{\alpha_n}$.

Let $\calI=\{I_1,\ldots,I_r\}$ be monomial ideals in $\K[\x]$.  The multi-Rees algebra of $\calI$ is the following subring of $\K[\x][t_1,\ldots,t_r]$:
\[
R[\calI\bt]=R[I_1t_1,\ldots,I_rt_r]=\bigoplus\limits_{a_1,\ldots,a_k\ge 0} I_1^{a_1}\cdots I_r^{a_r}t_1^{a_1}\cdots t_r^{a_r}.
\]
In case $\calI=\{I\}$, a single ideal, then we write $R[It]$ instead of $R[\calI\bt]$; this is the Rees algebra of $I$.

In keeping with our notation for the variables $x_1,\ldots,x_n$, we write $\bt$ for the set of variables $\{t_1,\ldots,t_r\}$ and if $\bbeta\in\Z_{\ge 0}^k$ then we write $\bt^{\bbeta}$ for the monomial $t_1^{\beta_1}\cdots t_r^{\beta_r}$.  We also write $\K[\x,\bt]$ for the polynomial ring $\K[\x][t_1,\ldots,t_r]=\K[x_1,\ldots,x_n,t_1,\ldots,t_r]$.

Let $G_1,\ldots,G_r$ be minimal sets of generators for $I_1,\ldots,I_r$. Then clearly
\[
R[\calI\bt]=\K[x_1,\ldots,x_n,\{\x^{\balpha} t_k: \x^{\balpha}\in G_k,1\le k\le r\}]
\]
We create a variable $T_{\x^\alpha t_k}$ for each monomial $\x^{\balpha}t_k$, where $\x^{\balpha}\in G_k$ and $1\le k\le r$, and write $\bT$ for the set of all such variables.  We then present the multi-Rees algebra $R[\calI\bt]$ by the map
\[
\phi:\K[\x,\bT]\to \K[\x,\bt]
\]
defined as $\phi(x_i)=x_i$ for all $x_i\in\x$ and $\phi(T_{\x^{\balpha}t_i})=x^{\balpha}t_i$ for all $\x^{\balpha}t_i$ with $\x^{\balpha}\in G_i$.  Clearly this is a toric map.  We will be concerned with the \textit{defining equations} of $R[\calI\bt]$, which is the toric ideal $\ker(\phi)=J_{R[\calI\bt]}$.  We also call this the \textit{multi-Rees} ideal of $\calI$, or simply the \textit{Rees ideal} of $I$ if $\calI$ consists of the single ideal $I$.

%It is clear that the image of $\phi$ is exactly the multi-Rees algebra $R[\calI\bt]$.

Closely related to the multi-Rees algebra is the \textit{multi-fiber} ring of $R[\calI\bt]$.  Write $\mathfrak{m}$ for the ideal $\langle x_1,\ldots,x_n\rangle\subset \K[\x]$.  Then the multi-fiber ring of $R[\calI\bt]$ is $R[\calI\bt]/\mathfrak{m}R[\calI\bt]$ (viewing $R[\calI\bt]$ as a $\K[\x]$-module).  If the monomial ideals $\calI=\{I_1,\ldots,I_k\}$ are each generated in a single degree, then we have an isomorphism
\[
R[\calI\bt]/\mathfrak{m}R[\calI\bt]\cong \K[\x^{\balpha} t_k: \x^{\balpha}\in G_k, 1\le k\le r].
\]
We denote the ring $\K[\x^{\balpha} t_k: \x^{\balpha}\in G_k, 1\le k\le r]$ by $\K[\calI\bt]$.  If $\calI=\{I\}$ consists of a monomial ideal generated in a single degree, then
\[
R[It]/\mathfrak{m}R[It]\cong \K[\calI\bt]\cong \K[I],
\]
the toric ring of $I$.

\section{Borel Ideals}\label{sec:Borel}

In this section we collect some notation and results regarding Borel moves and Borel ideals.  Most of these can be found in the literature (c.f.~\cite{DN99,DFMSS19}).  What we refer to as a \textit{Borel ideal} is often called a \textit{strongly stable} ideal.  A Borel ideal is the same as a \textit{Borel-fixed} ideal in characteristic $0$, although this correspondence breaks down in higher characteristic.  Throughout we consider the polynomial ring $\K[\x]=\K[x_1,\ldots,x_n]$.

\begin{definition}\label{def:BorelDefinitions}
	If $m$ is a monomial in $\K[\x]$ which is divisible by $x_j$ and $i<j$, a \textit{Borel move} (on $m$) consists of replacing $m$ by $\frac{x_i}{x_j}m$.  If instead $m$ is divisible by $x_i$, a \textit{reverse Borel move} (on $m$) consists of replacing $m$ by $\frac{x_j}{x_i}m$.  If $\mathcal{S}\subset \K[\x]$ is a collection of monomials, we say it is \textit{Borel closed} if any Borel move on a monomial in $\mathcal{S}$ is also in $\mathcal{S}$.  If $\mathcal{S}$ is not Borel closed, we write $\Borel(\mathcal{S})$ for the smallest Borel closed set of monomials containing $\mathcal{S}$.  Clearly $\Borel(\mathcal{S})$ consists of all monomials which can be obtained from monomials of $\mathcal{S}$ via Borel moves.  If $\mathcal{S}$ is Borel closed, we call $\mathcal{S}$ a \textit{principal Borel set} if $\mathcal{S}=\Borel(M)$ for a single monomial $M$.  If $I$ is a monomial ideal, we call it \textit{principal Borel} if it is generated by a principal Borel set.
\end{definition}

\begin{remark}
	See~\cite{FMS11} for more on the Borel generators of a Borel ideal, and the wealth of information that can be obtained from this set.
\end{remark}

\begin{definition}\label{def:BorelOrder}
	We define the \textit{Borel (partial) order}, denoted $<_{\Borel}$, on the monomials of $\K[\x]$ by $m<_{\Borel}m'$ if $m'$ can be obtained from $m$ by a sequence of Borel moves.  \textit{Any} monomial order $\prec$ on $\K[\x]$ which satisfies $x_1\succ x_2\succ \cdots \succ x_n$ is a refinement of the Borel order.  In other words, if $m <_{\Borel}m'$, then $m\prec m'$.
\end{definition}

\begin{remark}
	We caution the reader that the Borel partial order is often given in the literature as the reverse of how we have presented it in Definition~\ref{def:BorelOrder} (c.f~\cite{FMS13,DFMSS19}).  We have made the choice in Definition~\ref{def:BorelOrder} so that the Borel partial order is compatible with monomial orders.
\end{remark}

\begin{remark}
	We will denote by $\prec_{\lex},\prec_{\glex},$ and $\prec_{\grevlex}$ the \textit{lexicographic},\textit{graded lexicographic}, and \textit{graded reverse lexicographic} monomial orders.
\end{remark}

\begin{example}
	We illustrate how the graded reverse lexicographic and lexicographic monomial orders on $\K[x_1,x_2,x_3,x_4]$ refine the Borel order. 
	%Take the monomial $M=x_4^3$. Then $\Borel(M)$ consists of all monomials of degree three, but not all monomials of degree three are related via the Borel order.
	First, if monomials are comparable in the Borel order then they are compared in the same way by any monomial order.  For example, $x_1x_2x_3<_{\Borel}x_1^2x_2$.  Notice that $x_1x_2x_3\prec_{\grevlex}x_1^2x_2$, and $x_1x_2x_3\prec_{\lex} x_1^2x_2$.  On the other hand, monomials which are incomparable under the Borel order may be compared in different ways by different monomial orders.  For instance, the monomials $x_2^2x_3$ and $x_1^2x_4$ are incomparable in the Borel order since neither can be obtained from the other by Borel moves.  Notice that $x_1^2x_4\prec_{\grevlex}x_2^2x_3$ while $x_2^2x_3\prec_{\lex} x_1^2x_4$.
\end{example}

We will use a few more results about principal Borel ideals.

\begin{definition}\label{def:CumulativeExponents}
	Suppose $m=x_1^{a_1}\cdots x_n^{a_n}\in \K[\x]$.  We define
	\[
	\sigma_i(m)=a_i+a_{i+1}+\cdots+a_n.
	\]
\end{definition}

\begin{lemma}\label{lem:BorelMembership}
	Suppose $m,M$ are monomials of the same degree in $\K[\x]$.  Then $m\in \Borel(M)$ if and only if $\sigma_i(m)\le \sigma_i(M)$ for $i=1,\ldots,n$.
\end{lemma}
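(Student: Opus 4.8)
The plan is to reduce everything to understanding how a single Borel move changes the suffix-sums $\sigma_i$. Set $\sigma_{n+1}(m):=0$; then $\sigma_i(m)=a_i+\sigma_{i+1}(m)$, so the vector $(\sigma_1(m),\dots,\sigma_n(m))$ is non-increasing, it recovers the exponent vector of $m$ via $a_i=\sigma_i(m)-\sigma_{i+1}(m)$ (in particular two monomials with equal $\sigma$-vectors are equal), and $\sigma_1(m)=\deg m$. A direct count shows that if $m'=\frac{x_i}{x_j}m$ is a Borel move (so $i<j$ and $x_j\mid m$), then $\sigma_k(m')=\sigma_k(m)$ for $k\le i$ or $k>j$, while $\sigma_k(m')=\sigma_k(m)-1$ for $i<k\le j$; in particular no Borel move increases any $\sigma_k$. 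The forward implication is then immediate: if $m\in\Borel(M)$, then $m$ is obtained from $M$ by a sequence of Borel moves, each of which leaves every $\sigma_k$ fixed or decreases it, so $\sigma_i(m)\le\sigma_i(M)$ for all $i$.

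For the converse I would induct on the nonnegative integer $\delta:=\sum_{i=1}^n\bigl(\sigma_i(M)-\sigma_i(m)\bigr)$. If $\delta=0$ then all the $\sigma_i$ agree, so $m=M\in\Borel(M)$. If $\delta>0$, let $j$ be the largest index with $\sigma_j(m)<\sigma_j(M)$; since $\sigma_1(m)=\deg m=\deg M=\sigma_1(M)$ we have $j\ge 2$, and since $\sigma_j(m)<\sigma_1(m)$ the monomial $m$ is divisible by some variable of index $<j$. Let $i<j$ be the largest index with $x_i\mid m$; then $x_k\nmid m$ for $i<k<j$, hence $\sigma_k(m)=\sigma_j(m)$ for every $k$ with $i<k\le j$.

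Now perform the reverse Borel move $m':=\frac{x_j}{x_i}m$. Since $m=\frac{x_i}{x_j}m'$ is a Borel move on $m'$, the count above gives $\sigma_k(m')=\sigma_k(m)$ for $k\le i$ or $k>j$, and $\sigma_k(m')=\sigma_k(m)+1=\sigma_j(m)+1$ for $i<k\le j$; for such $k$ we get $\sigma_j(m)+1\le\sigma_j(M)\le\sigma_k(M)$, using $\sigma_j(m)<\sigma_j(M)$ and that the $\sigma$-vector of $M$ is non-increasing with $k\le j$. Thus $\sigma_k(m')\le\sigma_k(M)$ for all $k$, and $\delta(m',M)=\delta-(j-i)<\delta$, so by the inductive hypothesis $m'\in\Borel(M)$; since $m$ arises from $m'$ via the Borel move $\frac{x_i}{x_j}$, we conclude $m\in\Borel(m')\subseteq\Borel(M)$.

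The only delicate point is the choice of the pair $(i,j)$ for the reverse move: an arbitrary reverse Borel move on $m$ could raise some $\sigma_k(m)$ past $\sigma_k(M)$ and break the induction. Choosing $j$ to be the last index at which $m$ is strictly deficient and $i$ the largest supporting index of $m$ below $j$ forces $\sigma_k(m)$ to be constant, equal to $\sigma_j(m)$, across the entire interval $i<k\le j$ that the move affects, which together with monotonicity of the $\sigma$-vector of $M$ is exactly what makes the single move safe. The rest is routine bookkeeping.
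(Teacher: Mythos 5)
Your argument is correct and complete. The paper itself does not prove this lemma; it simply cites \cite[Lemma~1.3]{DN99}, so there is no internal proof to compare against. Your proof supplies the standard argument one would expect to find: the precise accounting of how a single Borel move $m\mapsto\frac{x_i}{x_j}m$ (with $i<j$) alters the suffix sums — decreasing $\sigma_k$ by $1$ exactly for $i<k\le j$ and leaving the rest fixed — immediately gives the forward implication, and the converse is handled by a clean descent on $\delta=\sum_i(\sigma_i(M)-\sigma_i(m))$. The one genuinely delicate point, which you correctly isolate and justify, is that the reverse move must be chosen so that $\sigma_k$ is constant on the affected window $(i,j]$: taking $j$ to be the last deficient index and $i$ the largest index $<j$ supporting $m$ forces $a_\ell=0$ for $i<\ell<j$, so $\sigma_k(m)=\sigma_j(m)$ on that window and the raised values stay bounded by $\sigma_j(M)\le\sigma_k(M)$ by monotonicity of the $\sigma$-vector of $M$. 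The base case is also handled properly via the observation that the $\sigma$-vector determines the exponent vector. In short, this is a valid self-contained proof of a statement the paper defers to the literature.
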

\begin{proof}
	This is part of~\cite[Lemma~1.3]{DN99}.
\end{proof}

\begin{observation}\label{lem:ReverseBorelMoveCriterion}
	If $m\in \Borel(M)$ and $\sigma_j(m)<\sigma_j(M)$, then there exists an index $i<j$ so that $\frac{x_j}{x_i}m\in\Borel(M)$.  Equivalently, if $m\in\Borel(M)$ and $m\neq M$, then there is a reverse Borel move on $m$ that is also in $\Borel(M)$.
\end{observation}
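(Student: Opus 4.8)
The plan is to reduce everything to the numerical criterion of Lemma~\ref{lem:BorelMembership}. Write $m=x_1^{a_1}\cdots x_n^{a_n}$ and $M=x_1^{b_1}\cdots x_n^{b_n}$; since $m\in\Borel(M)$, the two monomials have the same degree and $\sigma_k(m)\le\sigma_k(M)$ for all $k$. The first step is to record how the vector $(\sigma_k)$ transforms under a reverse Borel move: if $i<j$ and $x_i$ divides $m$, then $m':=\frac{x_j}{x_i}m$ satisfies $\sigma_k(m')=\sigma_k(m)$ for $k\le i$ and for $k>j$, while $\sigma_k(m')=\sigma_k(m)+1$ for $i<k\le j$ (this is a one-line bookkeeping check, since a factor $x_i$ is removed and a factor $x_j$ is inserted). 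Combining this with Lemma~\ref{lem:BorelMembership} and the already-known inequalities $\sigma_k(m)\le\sigma_k(M)$, one gets the clean equivalence: $m'\in\Borel(M)$ if and only if $\sigma_k(m)<\sigma_k(M)$ for every $k$ with $i<k\le j$.

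With this in hand, to prove the first assertion suppose $\sigma_j(m)<\sigma_j(M)$. Since $\sigma_1(m)=\deg(m)=\deg(M)=\sigma_1(M)$, the set $\{k<j:\sigma_k(m)=\sigma_k(M)\}$ is nonempty; let $i$ be its largest element. By maximality of $i$, every index $k$ with $i<k\le j$ has $\sigma_k(m)\ne\sigma_k(M)$, hence $\sigma_k(m)<\sigma_k(M)$; by the equivalence above this is exactly what is needed to conclude $\frac{x_j}{x_i}m\in\Borel(M)$, once we check that $x_i$ divides $m$. That last point is the only real computation: from $\sigma_i(m)=\sigma_i(M)$ and $\sigma_{i+1}(m)<\sigma_{i+1}(M)$ (note $i+1\le j$) we get
\[
a_i=\sigma_i(m)-\sigma_{i+1}(m)=\sigma_i(M)-\sigma_{i+1}(m)>\sigma_i(M)-\sigma_{i+1}(M)=b_i\ge 0,
\]
so $a_i\ge 1$ and the reverse Borel move $\frac{x_j}{x_i}m$ is legitimate. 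This proves the first statement, with the explicit witness $i=\max\{k<j:\sigma_k(m)=\sigma_k(M)\}$.

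Finally, the ``equivalently'' clause is immediate: if $m\in\Borel(M)$ with $m\ne M$, then the vectors $(\sigma_k(m))$ and $(\sigma_k(M))$ cannot coincide in every coordinate --- equality everywhere would force $a_k=\sigma_k(m)-\sigma_{k+1}(m)=\sigma_k(M)-\sigma_{k+1}(M)=b_k$ for all $k$, i.e.\ $m=M$ --- so Lemma~\ref{lem:BorelMembership} yields an index $j$ with $\sigma_j(m)<\sigma_j(M)$, and the first part produces a reverse Borel move on $m$ lying in $\Borel(M)$. There is no genuinely hard step here; the only thing one must not overlook is verifying the divisibility condition $x_i\mid m$ for the chosen index $i$, which is handled by the short inequality $a_i>b_i\ge 0$ displayed above.
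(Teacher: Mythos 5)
Your proof is correct. The paper states this result as an \emph{Observation} and gives no proof at all, evidently treating it as a routine consequence of Lemma~\ref{lem:BorelMembership}; your argument supplies exactly the kind of reasoning the authors are leaving implicit. The bookkeeping for how $\sigma_k$ transforms under a reverse Borel move, the resulting clean characterization of when $\frac{x_j}{x_i}m\in\Borel(M)$, and the choice $i=\max\{k<j:\sigma_k(m)=\sigma_k(M)\}$ are all sound, and you correctly identify and dispatch the one non-obvious point, namely that $x_i$ actually divides $m$. One micro-detail worth noting explicitly: the set $\{k<j:\sigma_k(m)=\sigma_k(M)\}$ is guaranteed to contain $k=1$ only when $j>1$; but $j=1$ is impossible under the hypothesis since $\sigma_1(m)=\deg m=\deg M=\sigma_1(M)$ would contradict $\sigma_j(m)<\sigma_j(M)$, so this is harmless.
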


Since $<_{\Borel}$ is a partial order, an arbitrary set of monomials of the same degree may not have a unique maximal or minimal element under the Borel order.  The following lemma, which is a variation on~\cite[Lemma~5.1]{DFMSS19}, gives one instance where unique minimal elements exist under the Borel order.  Due to its centrality in our arguments we give a detailed proof.

\begin{lemma}\label{lem:BorelMultidegreeDivision}
	Let $M,\mu\in \K[\x]$ be monomials.  If there is a monomial in $\Borel(M)$ dividing $\mu$, then there is a unique minimal monomial $M'$ under the Borel partial order so that $M'\in\Borel(M)$ and $M'$ divides $\mu$.  In other words, if $m$ is any monomial in $\Borel(M)$ which divides $\mu$ then $m\in\Borel(M')$.  Moreover, suppose $m\neq M'$ and let $j$ be the maximal index so that $\sigma_j(m)<\sigma_j(M')$.  Then there is an index $i<j$ so that $\frac{x_j}{x_i}m\in \Borel(M)$ and $\frac{x_j}{x_i}m$ divides $\mu$.
\end{lemma}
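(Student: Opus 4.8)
The plan is to build the minimal monomial $M'$ directly as a kind of "greedy lexicographic minimum" inside the set $D = \{m \in \Borel(M) : m \mid \mu\}$, and then verify it has the claimed universal property. First I would observe that $D$ is nonempty by hypothesis, and that every element of $D$ has the same degree $d = \deg M$. For each $i$, the quantity $\sigma_i(m) = a_i + \cdots + a_n$ (with $m = x_1^{a_1}\cdots x_n^{a_n}$) is bounded above by $\sigma_i(M)$ for $m \in D$ (Lemma~\ref{lem:BorelMembership}) and also controlled from above by $\mu$ through divisibility; on the other hand $\sigma_1(m) = d$ is constant. The idea is to minimize $\sigma_n(m)$ over $m \in D$, then among minimizers minimize $\sigma_{n-1}(m)$, and so on down to $\sigma_1$ (which is forced). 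I would argue this nested minimization has a unique solution $M'$: if $m, m' \in D$ both realize the minimal value of $\sigma_n, \sigma_{n-1}, \ldots$ at every stage, then $\sigma_i(m) = \sigma_i(m')$ for all $i$, hence $m = m'$ since the $\sigma_i$ determine the exponent vector. The key point is to check $M' \in D$, i.e.\ that the set of simultaneous minimizers is nonempty — this follows because at each stage we are intersecting $D$ with finitely many further constraints and $D$ is finite and nonempty, so the process terminates at a nonempty set.

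Next I would show $M'$ is the unique minimal element of $D$ under $<_{\Borel}$ and that it lies below every element of $D$, i.e.\ $m \in D \implies m \in \Borel(M')$. By Lemma~\ref{lem:BorelMembership} it suffices to show $\sigma_i(m) \le \sigma_i(M')$ for all $i$, given $m \in D$. Suppose not, and let $j$ be an index with $\sigma_j(m) > \sigma_j(M')$; I would take $j$ maximal with this property, so that $\sigma_k(m) \le \sigma_k(M')$ for all $k > j$. The strategy is to produce from $m$, using reverse Borel moves that stay inside $D$, a monomial $m''$ with $\sigma_k(m'') = \sigma_k(m)$ for $k > j$ (so $\sigma_j$ still exceeds that of $M'$) but with $\sigma_{j}(m'')$ no larger and $\sigma_\ell(m'')$ matching $\sigma_\ell(M')$ for $\ell > j$ — contradicting minimality of $M'$ at stage $j$. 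Concretely, I would use Observation~\ref{lem:ReverseBorelMoveCriterion}: since $\sigma_k(M') \le \sigma_k(M)$ and, where the two differ, one can perform a reverse Borel move on $M'$ landing in $\Borel(M)$; I want to instead massage $m$ so that its $\sigma$-profile for indices $> j$ agrees with $M'$'s while keeping $m \in \Borel(M)$ and $m \mid \mu$. This comparison argument — moving mass between variables via reverse Borel moves while preserving divisibility by $\mu$ — is the technical heart and the step I expect to be the main obstacle; the delicate part is that a reverse Borel move $m \mapsto \frac{x_j}{x_i}m$ requires $x_i \mid m$ and, to stay dividing $\mu$, requires the exponent of $x_j$ in $\mu$ to exceed its exponent in $m$, so I must track exponents of $\mu$ carefully, not just the $\sigma_i(M)$ bounds.

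For the "moreover" clause: given $m \in D$ with $m \ne M'$, since $m \in \Borel(M')$ (just established) and $m \ne M'$, there is some index with $\sigma_j(m) < \sigma_j(M')$; take $j$ maximal among these. By Observation~\ref{lem:ReverseBorelMoveCriterion} applied to $m \in \Borel(M')$, there is an index $i < j$ with $\frac{x_j}{x_i}m \in \Borel(M') \subseteq \Borel(M)$. It remains to see $\frac{x_j}{x_i}m \mid \mu$. Here I would use the maximality of $j$: for all $k > j$ we have $\sigma_k(m) \ge \sigma_k(M')$, but also $\sigma_k(m) \le \sigma_k(M')$ since $m \in \Borel(M')$, so $\sigma_k(m) = \sigma_k(M')$ for all $k > j$, which pins down the exponent of $x_\ell$ in $m$ to equal that in $M'$ for every $\ell > j$. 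In particular the exponent of $x_j$ in $M'$ equals $\sigma_j(M') - \sigma_{j+1}(M')$ while in $m$ it is $\sigma_j(m) - \sigma_{j+1}(m) = \sigma_j(m) - \sigma_{j+1}(M') < \sigma_j(M') - \sigma_{j+1}(M')$, so $m$ has strictly smaller $x_j$-exponent than $M'$; since $M' \mid \mu$, the $x_j$-exponent of $\mu$ is at least that of $M'$, hence strictly larger than that of $m$, so raising the $x_j$-exponent of $m$ by one still divides $\mu$, and lowering the $x_i$-exponent only helps — thus $\frac{x_j}{x_i}m \mid \mu$. This completes the argument.
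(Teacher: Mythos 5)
There are two issues with your proposal, one concrete error and one genuine gap you yourself flag as unresolved.

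First, the direction of your greedy construction is backwards. Under the paper's conventions, a Borel move $m \mapsto \tfrac{x_i}{x_j}m$ (with $i<j$) \emph{decreases} each $\sigma_k$ for $i<k\le j$, and $m <_{\Borel} m'$ is defined so that $m'$ is the result of Borel moves on $m$. Thus the monomials that are \emph{small} in the Borel partial order are the ones with \emph{large} $\sigma$-values, and the $M'$ the lemma seeks (the one with $\sigma_i(m)\le\sigma_i(M')$ for every $m\in D$) is the coordinatewise maximizer of $\sigma$, not the minimizer. Your "minimize $\sigma_n$, then $\sigma_{n-1}$, \dots" recipe produces the maximal element of $D$ in Borel order, not the minimal one. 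Your later bookkeeping ($j$ maximal with $\sigma_j(m)>\sigma_j(M')$, wanting to contradict "minimality at stage $j$") is consistent with that inverted reading and wouldn't yield a contradiction even on its own terms, since $\sigma_j(m)>\sigma_j(M')$ is perfectly compatible with $\sigma_j(M')$ having been chosen small.

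Second, and more seriously, even after flipping min to max, the step you identify as "the technical heart" really is the gap, and the strategy you sketch for it goes in the wrong direction. You propose to massage $m$ by reverse Borel moves inside $D$ until its $\sigma$-profile for indices $>j$ matches $M'$'s, and then compare at index $j$ — but the $m$-side modifications don't threaten the optimality of $M'$ in the greedy construction, and it's unclear how to control divisibility by $\mu$ along the way (as you note). The paper's move is the opposite and much cleaner: leave $m$ alone, take \emph{any} minimal $M'\in D$, and if some $m\in D$ has $\sigma_j(m)>\sigma_j(M')$ (with $j$ maximal), then $\sigma_j(M')<\sigma_j(m)\le\sigma_j(M)$, so Observation~\ref{lem:ReverseBorelMoveCriterion} gives a reverse Borel move $M''=\tfrac{x_j}{x_i}M'\in\Borel(M)$; the maximality of $j$ forces the $x_j$-exponent of $m$ (hence of $\mu$) to strictly exceed that of $M'$, so $M''\mid\mu$, and $M''<_{\Borel}M'$ contradicts minimality. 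Modifying $M'$ directly produces the needed smaller element of $D$; modifying $m$ does not.

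Your treatment of the "moreover" clause is correct and matches the paper: you use $m\in\Borel(M')$, take $j$ maximal with $\sigma_j(m)<\sigma_j(M')$, invoke Observation~\ref{lem:ReverseBorelMoveCriterion}, and deduce divisibility of $\tfrac{x_j}{x_i}m$ by $\mu$ from the forced equality of high-index exponents of $m$ and $M'$. If you replay exactly that style of argument (Observation~\ref{lem:ReverseBorelMoveCriterion} applied at the extremal index, divisibility tracked via the exponent of $x_j$) on the first half — with the reverse Borel move applied to $M'$ — the gap closes and you recover the paper's proof.
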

\begin{proof}
	Assume that the set of monomials in $\Borel(M)$ which divide $\mu$ is non-empty.  Let $M'$ be a minimal element of this set under Borel order.  Now suppose that $m\in\Borel(M)$ and $m$ divides $\mu$.  For a contradiction, suppose that $m\notin\Borel(M')$.
	
	By Lemma~\ref{lem:BorelMembership} there is a maximal index $1\le j\le n$ so that $\sigma_j(m)>\sigma_j(M')$.  Since $\sigma_j(M)\ge \sigma_j(m)>\sigma_j(M')$, by Observation~\ref{lem:ReverseBorelMoveCriterion} there is an index $i<j$ so that $M''=\frac{x_j}{x_i}M'\in \Borel(M)$.  Clearly $M''<_{Borel} M'$.  We show that $M''$ divides $\mu$.  Since $\sigma_j(m)>\sigma_j(M')$, and this is the maximal index for which $\sigma_k(m)$ is greater than $\sigma_k(M')$, it follows that the exponent of $x_j$ in $m$ is strictly greater than it is in $M'$.  Since $m$ divides $\mu$, it follows that the exponent of $x_j$ in $\mu$ is also strictly greater than the exponent of $x_j$ in $M'$.  Thus $x_jM'$ divides $\mu$, so clearly $M''$ divides $\mu$ as well.  Hence $M'$ is not a minimal monomial under Borel order in $\Borel(M)$ dividing $\mu$, a contradiction.
	
	Now suppose that $m\in\Borel(M)$, $m$ divides $\mu$, and $m\neq M'$, where $M'$ is the unique minimal monomial in $\Borel(M)$ under Borel order which divides $\mu$ (by the above paragraph, this monomial exists).  Since $m\in\Borel(M')$ and $m\neq M'$, by Lemma~\ref{lem:BorelMembership}, there is a maximal index $1\le j\le n$ so that $\sigma_j(m)<\sigma_j(M')$.  Thus by Observation~\ref{lem:ReverseBorelMoveCriterion}, there is an index $i<j$ so that $\frac{x_j}{x_i}m\in\Borel(M')$.  We show that $\frac{x_j}{x_i}m$ divides $\mu$.  Since $\sigma_j(m)<\sigma_j(M')$, and this is the maximal index for which $\sigma_k(m)$ is less than $\sigma_k(M')$, it follows that the exponent of $x_j$ in $M'$ is strictly greater than it is in $m$.  Since $M'$ divides $\mu$, it follows that the exponent of $x_j$ in $\mu$ is also strictly greater than the exponent of $x_j$ in $m$.  Thus $x_jm$ divides $\mu$, so clearly $\frac{x_j}{x_i}m$ divides $\mu$ as well.
\end{proof}

\begin{example}
	Consider the monomial $M=x_2^2x_4$ in the polynomial ring\\ $\K[x_1,x_2,x_3,x_4]$.  Then
	\[
	\Borel(M)=\{x_1^3, x_1^2x_2, x_1x_2^2, x_2^3, x_1^2x_3, x_1x_2x_3, x_2^2x_3,
	x_1^2x_4, x_1x_2x_4, x_2^2x_4\}.
	\]
	If $\mu=x_3x_4^2$ there is no monomial in $\Borel(M)$ dividing $\mu$.  On the other hand, if $\mu=x_1^2x_2x_3$, the monomials in $\Borel(x_2^2x_4)$ which divide $\mu$ are $x_1^2x_2$, $x_1^2x_3$, and $x_1x_2x_3$.  Clearly 
	$M'=x_1x_2x_3$ is the unique minimal monomial under Borel partial order among those which divide $\mu$.  Furthermore $x_1^2x_2$ and $x_1^2x_3$ are both in $\Borel(x_1x_2x_3)$, and each has a reverse Borel move transforming it into $x_1x_2x_3$.
	%First of all, we should mention that in the lemma above, the assumption of existence of a monomial in $\Borel(M)$ which divides $\mu$ is necessary. For example consider the polynomial ring $\K[x_1,x_2,x_3,x_4]$. Let $\mu=$. As we see there is no monomial in the 
	%dividing $\mu$. But if we consider $\Borel(x_2^2x_4)$, $\mu=x_1^2x_2x_3$, then  which divides $\mu$ and it is minimal under Borel partial order and unique. 
\end{example}

Given a monomial $M$, we use the following lemma to determine if a monomial $\mu\in \K[\x]$ has a factorization whose factors belong to $\Borel(M)$.

\begin{lemma}\label{lem:BorelFactorization}
Let $M,N\in \K[\x]$.  Then $N$ factors as $N=M_1\cdots M_k$, where $M_1,\ldots,M_k\in\Borel(M)$, if and only if $N\in\Borel(M^k)$.
\end{lemma}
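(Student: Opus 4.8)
The plan is to prove the two implications separately. The forward implication follows at once from the additivity of the cumulative‑exponent functions $\sigma_i$ together with Lemma~\ref{lem:BorelMembership}, and the reverse implication I would obtain by inducting on the number of Borel moves needed to pass from $M^k$ to $N$.

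For the forward direction, suppose $N=M_1\cdots M_k$ with $M_j\in\Borel(M)$ for each $j$. Borel moves preserve total degree, so $\deg M_j=\deg M$ and hence $\deg N=k\deg M=\deg M^k$. Since $\sigma_i$ is additive on products of monomials, $\sigma_i(N)=\sum_{j=1}^k\sigma_i(M_j)$, and Lemma~\ref{lem:BorelMembership} applied to each $M_j\in\Borel(M)$ gives $\sigma_i(M_j)\le\sigma_i(M)$, whence $\sigma_i(N)\le k\sigma_i(M)=\sigma_i(M^k)$ for every $i$. Applying Lemma~\ref{lem:BorelMembership} once more, now to the pair $N$ and $M^k$ (which have equal degree), yields $N\in\Borel(M^k)$.

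For the reverse direction, I would use that $N\in\Borel(M^k)$ means there is a chain of Borel moves $M^k=N_0\to N_1\to\cdots\to N_p=N$, and induct on $p$. When $p=0$ the factorization $N=M^k=M\cdots M$ works. For the inductive step, suppose $N_{p-1}$ admits a factorization $N_{p-1}=M_1'\cdots M_k'$ with each $M_j'\in\Borel(M)$, and that $N$ is obtained from $N_{p-1}$ by the Borel move replacing one occurrence of $x_b$ by $x_a$ with $a<b$, so $N=(x_a/x_b)N_{p-1}$. Since $x_b$ divides $N_{p-1}=\prod_j M_j'$, it divides some factor $M_\ell'$; replacing $M_\ell'$ by $M_\ell:=(x_a/x_b)M_\ell'$ — which is a Borel move on $M_\ell'$, so $M_\ell\in\Borel(M)$ because $\Borel(M)$ is Borel closed — and leaving the other factors unchanged gives $N=(x_a/x_b)N_{p-1}=M_1\cdots M_k$ with every factor in $\Borel(M)$, completing the induction.

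I expect the main temptation (and the main pitfall) would be to prove the reverse direction by peeling off a single factor $M_1\in\Borel(M)$ with $M_1\mid N$ and $N/M_1\in\Borel(M^{k-1})$ and then inducting on $k$, producing $M_1$ via Lemma~\ref{lem:BorelMultidegreeDivision}. Translating the requirements through Lemma~\ref{lem:BorelMembership}, one needs a divisor $m$ of $N$ of degree $\deg M$ with $\max\!\big(0,\sigma_i(N)-(k-1)\sigma_i(M)\big)\le\sigma_i(m)\le\sigma_i(M)$ for all $i$ simultaneously; the Borel‑extremal choices of such $m$ need not meet both bounds at once, so this route forces a somewhat delicate feasibility argument for a monotone staircase of partial sums. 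The Borel‑move induction above sidesteps this entirely, which is why I would organize the proof that way.
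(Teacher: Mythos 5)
Your proof is correct, and the reverse direction coincides exactly with the paper's: induct on the length of a chain of Borel moves from $M^k$ to $N$, and at the inductive step transfer the final move $\frac{x_a}{x_b}$ to whichever factor of the inherited factorization is divisible by $x_b$. The forward direction, however, takes a different (and slightly less economical) route. You pass through the numerical criterion of Lemma~\ref{lem:BorelMembership} together with the additivity of the $\sigma_i$'s, deducing $\sigma_i(N)\le k\sigma_i(M)=\sigma_i(M^k)$ for all $i$. The paper instead argues constructively and more directly: each $M_j$ is obtained from $M$ by a sequence of Borel moves, and applying those same moves to the $j$-th factor of $M^k=M\cdot M\cdots M$ exhibits $N$ as a Borel descendant of $M^k$, with no appeal to Lemma~\ref{lem:BorelMembership} at all. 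Both are valid one-liners; the paper's keeps the whole lemma purely at the level of Borel moves, which matches the flavor of the reverse direction, while yours trades that for a quick check via the $\sigma_i$ characterization. Your closing remark about the pitfall of trying to peel off one factor at a time via Lemma~\ref{lem:BorelMultidegreeDivision} is well taken: that route does run into exactly the feasibility constraints you describe, and the move-counting induction is indeed the clean way to avoid them.
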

\begin{proof}
	First suppose that $N=M_1\ldots M_k$, where $M_1,\ldots,M_k\in\Borel(M)$.  Then $M_1,\ldots,M_k$ can each be obtained from $M$ by a sequence of Borel moves.  Applying the same Borel moves to each factor in the $k$-fold product $M^k=M\cdot M\cdots M$ yields that $N\in\Borel(M^k)$.  
	
	Now suppose that $N\in\Borel(M^k)$.  We show that $N=M_1\cdots M_k$ for some $M_1,\ldots,M_k\in\Borel(M)$ by induction on the number of Borel moves necessary to obtain $N$ from $M^k$.  For the base case, if $N=M^k$ then we can simply take $M_1=\cdots=M_k=M$.  Now suppose that $N$ can be obtained from $M^k$ in $\ell\ge 1$ Borel moves.  Then there is some $N'\in\Borel(M^k)$ so that $N=\frac{x_i}{x_j}N'$ (that is, $N$ is obtained from $N'$ by a Borel move), and $N'$ can be obtained from $M^k$ in $\ell-1$ Borel moves.  By induction, $N'=M'_1\cdots M'_k$ for some $M'_1,\ldots,M'_k\in\Borel(M)$.  Since $x_j$ divides $N'$, $x_j$ must divide one of the factors $M'_1,\ldots,M'_k$, without loss of generality suppose $x_j$ divides $M'_1$.  Put $M_1=\frac{x_i}{x_j}M'_1$, $M_2=M'_2,\ldots, M_k=M'_k$.  Clearly $M_1,\ldots,M_k\in\Borel(M)$ and $\mu=M_1\cdots M_k$.  This completes the induction.
\end{proof}

We will use the following refinement of Lemma~\ref{lem:BorelMultidegreeDivision} for factorizations of monomials in $\Borel(M^k)$.

\begin{lemma}\label{lem:BorelFactorizationMultidegreeDivision}
Let $M,\mu\in \K[\x]$ be monomials and $k\ge 1$ an integer.  If there is a monomial in $\Borel(M^k)$ dividing $\mu$, then let $P$ be the minimal monomial under Borel order in $\Borel(M^k)$ dividing $\mu$ guaranteed by Lemma~\ref{lem:BorelMultidegreeDivision}.  Suppose $P'\in\Borel(M^k)$ factors as $P'=P'_1\cdots P'_k$.  If $P'\neq P$, then there are indices $1\le i<j\le n$ and $1\le \ell\le k$ so that
\begin{enumerate}
\item $\frac{x_j}{x_i}P'_\ell\in \Borel(M)$
\item $\frac{x_j}{x_i}P'$ divides $\mu$
\item $\frac{x_j}{x_i}P'\in\Borel(M^k)$
\end{enumerate}
\end{lemma}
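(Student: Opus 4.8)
The plan is to reduce to the single-ideal statement Lemma~\ref{lem:BorelMultidegreeDivision}, applied with $M^k$ in place of $M$, and then to \emph{localize} to one factor the reverse Borel move that lemma produces. First I would note that, since the monomial $P'$ divides $\mu$ and lies in $\Borel(M^k)$, while $P$ is the minimal monomial of $\Borel(M^k)$ dividing $\mu$ under Borel order, Lemma~\ref{lem:BorelMultidegreeDivision} gives $P'\in\Borel(P)$; as $P'\neq P$, Lemma~\ref{lem:BorelMembership} produces a maximal index $j$ with $\sigma_j(P')<\sigma_j(P)$. Arguing exactly as in the last paragraph of the proof of Lemma~\ref{lem:BorelMultidegreeDivision}, maximality of $j$ shows that the exponent of $x_j$ in $P$ strictly exceeds its exponent in $P'$, and since $P$ divides $\mu$ this forces $x_jP'$ to divide $\mu$. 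This is the divisibility room that will yield conclusion~(2).

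The heart of the argument is the claim that some factor $P'_\ell$ satisfies $\sigma_j(P'_\ell)<\sigma_j(M)$. Since each $P'_m$ lies in $\Borel(M)$, Lemma~\ref{lem:BorelMembership} gives $\sigma_j(P'_m)\le\sigma_j(M)$ for every $m$; if all of these inequalities were equalities then, by additivity of $\sigma_j$ over products,
\[
\sigma_j(P')=\sum_{m=1}^{k}\sigma_j(P'_m)=k\,\sigma_j(M)=\sigma_j(M^k)\ge\sigma_j(P)>\sigma_j(P'),
\]
a contradiction. Granting the claim, Observation~\ref{lem:ReverseBorelMoveCriterion} applied to $P'_\ell\in\Borel(M)$ at the index $j$ yields an index $i<j$ with $\frac{x_j}{x_i}P'_\ell\in\Borel(M)$, which is conclusion~(1). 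Because $\frac{x_j}{x_i}P'_\ell$ is a monomial we must have $x_i\mid P'_\ell$, hence $x_i\mid P'$, so $\frac{x_j}{x_i}P'$ is a monomial dividing $x_jP'$ and therefore dividing $\mu$; this is conclusion~(2). Finally $\frac{x_j}{x_i}P'=\bigl(\frac{x_j}{x_i}P'_\ell\bigr)\prod_{m\neq\ell}P'_m$ is a product of $k$ monomials each lying in $\Borel(M)$, so Lemma~\ref{lem:BorelFactorization} gives $\frac{x_j}{x_i}P'\in\Borel(M^k)$, which is conclusion~(3).

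The main obstacle is precisely the counting claim isolating a single factor $P'_\ell$ with $\sigma_j(P'_\ell)<\sigma_j(M)$: this is what guarantees that the reverse Borel move $\frac{x_j}{x_i}$ can be carried out \emph{inside} one factor without leaving $\Borel(M)$, rather than being smeared over several factors, which would wreck the factorization structure needed for~(3). Everything else is a transcription of the cumulative-exponent bookkeeping already done in Lemmas~\ref{lem:BorelMembership} and~\ref{lem:BorelMultidegreeDivision} and Observation~\ref{lem:ReverseBorelMoveCriterion}; the one point to record carefully is that the index $i$ produced by Observation~\ref{lem:ReverseBorelMoveCriterion} automatically divides $P'_\ell$, since otherwise $\frac{x_j}{x_i}P'_\ell$ would not be a monomial, and this divisibility is exactly what makes the localized move legitimate and the argument self-contained.
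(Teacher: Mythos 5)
Your proof is correct and follows essentially the same route as the paper's: identify the maximal index $j$ with $\sigma_j(P')<\sigma_j(P)$, use a counting/averaging argument on $\sigma_j(P'_1)+\cdots+\sigma_j(P'_k)=\sigma_j(P')<k\sigma_j(M)$ to isolate a factor $P'_\ell$ with $\sigma_j(P'_\ell)<\sigma_j(M)$, apply Observation~\ref{lem:ReverseBorelMoveCriterion} to that factor, and close with Lemma~\ref{lem:BorelFactorization}. The only cosmetic differences are that you phrase the existence of $\ell$ as a proof by contradiction rather than directly, and you spell out the implicit appeal to Lemma~\ref{lem:BorelMultidegreeDivision} for $P'\in\Borel(P)$, which the paper leaves tacit.
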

\begin{proof}
Let $j$ ($1\le j\le n$) be the maximal index so that $\sigma_j(P')<\sigma_j(P)$.  Since $P\in\Borel(M^k)$, $\sigma_j(P)\le k\sigma_j(M)$.  Since $\sigma_j(P'_1)+\cdots+\sigma_j(P'_k)=\sigma_j(P')<k\sigma_j(M)$, there exists an index $\ell$ so that $\sigma_j(P'_\ell)<\sigma_j(M)$.  Thus by Observation~\ref{lem:ReverseBorelMoveCriterion} there is some index $i<j$ so that $\frac{x_j}{x_i}P'_\ell\in\Borel(M)$.  Furthermore, since $j$ is the largest index so that $\sigma_j(P')<\sigma_j(P)$, the exponent of $x_j$ in $P'$ is strictly less than the exponent of $x_j$ in $P$ and hence also strictly less than the exponent of $x_j$ in $\mu$, since $P$ divides $\mu$.  As $P'$ also divides $\mu$ it follows that $x_jP'$ divides $\mu$, so clearly $\frac{x_j}{x_i}P'$ divides $\mu$.  Furthermore, $\frac{x_j}{x_i}P'$ factors as $(\frac{x_j}{x_i}P'_\ell)\prod_{t\neq \ell} P'_t$; since all of these factors are in $\Borel(M)$, it follows that $\frac{x_j}{x_i}P'\in\Borel(M^k)$ by Lemma~\ref{lem:BorelFactorization}.
%Hence by Lemma~\ref{lem:BorelMultidegreeDivision}, $\frac{x_j}{x_i}P'\in\Borel(P)$.
\end{proof}

\section{The toric ring of a principal Borel ideal}\label{sec:Principal}

De Negri shows in~\cite{DN99} that Sturmfels' concept of \textit{sorting} from~\cite{S96} yields a Gr\"obner basis of quadrics for the toric ideal of a principal Borel ideal.  The monomial order yielding this Gr\"obner basis of quadrics is called the \textit{sorting order}, and may be different from both graded reverse lexicographic order and lexicographic order on $\K[\bT]$.  The sorting order depends, a priori, on the ideal one starts with.  This makes it difficult to extend sorting to the multi-Rees algebra, where we have to consider multiple ideals simultaneously.  See~\cite{Sosa14} where this approach is taken, yielding some partial results for multi-Rees algebras of principal Borel ideals.

In this section we show that a modification of Sturmfels' sorting procedure from~\cite{S96} yields a Gr\"obner basis of quadrics under \textit{lexicographic} order.  Using this we show that the Rees algebra of a principal Borel ideal has a quadratic squarefree initial ideal under lexicographic order.  In the following sections we apply this to multi-Rees algebras.

Let $I$ be an ideal of the polynomial ring $\K[\x]$, and fix the \textit{graded reverse lexicographic} (grevlex) order on $\K[\x]$.
%For two monomials $m,n\in\K[\x]$ we write $m\prec_{\grevlex}n$ if $m$ is smaller than $n$ in grevlex order.  
Let $G$ be a minimal set of generators for $I$; then the toric ring of $I$ is $\K[I]=\K[m:m\in G]$.  As in Section~\ref{sec:ToricMaps}, we consider the map
\[
\K[T_m:m\in G]=\K[\bT]\xrightarrow{\phi} \K[I]=\K[m:m\in G],
\]
where $\K[\bT]$ is the polynomial ring with an indeterminate corresponding to every generator of $I$, $\K[I]$ is the sub-algebra of the polynomial ring $\K[\x]$ generated by the elements of $G$, and the map is defined on indeterminates by $\phi(T_m)=m$.  The grevlex order on $\K[\x]$ induces an ordering on the variables $\bT$ by $T_n\prec T_m$ if and only if $n\prec_{\grevlex} m$ in $\K[\x]$.  On top of this ordering of the variables of $\K[\bT]$ we will put the \textit{lexicographic} (lex) ordering on $\K[\bT]$.  For two monomials $\bT^{\bgamma},\bT^{\bgamma'}$ in $\K[\bT]$ we write $\bT^{\gamma}\prec_{\lex} \bT^{\gamma'}$ if $\bT^{\gamma}$ is smaller than $\bT^{\gamma'}$ in lex order.

We consider the toric ideal $J=J_{\K[I]}$.  Let $\mu\in\K[I]$.  The graded component $J_\mu$ is spanned by differences of the form $\prod_{i=1}^k T_{m_i}-\prod_{i=1}^k T_{n_i}$, where $\prod_{i=1}^k m_i=\prod_{i=1}^k n_i=\mu$ and $m_1,\ldots,m_k,n_1,\ldots,n_k\in \gens(I)=\Borel(M)$ for some monomial $M$.  By Lemma~\ref{lem:BorelFactorization}, $\mu$ is the product of $k$ monomials from $\Borel(M)$ if and only if $\mu\in\Borel(M^k)$.

\begin{definition}\label{def:BorelSort}
	Suppose $M$ is a monomial and $\mu\in\Borel(M^k)$.  Then the \blgsort{} of $\mu$ is the factorization of $\mu$ into $k$ factors from $\Borel(M)$ which is produced by the recursive Algorithm~\ref{alg:BS}.  We write this factorization as the \textit{ordered} list
	\[
	\bs(M,\mu):=\{\mu_1,\ldots,\mu_k\},
	\]
	where $\mu=\mu_1\cdots \mu_k$.
	%where $M_1\ge_{\text{grevlex}} M_2\ge_{\text{grevlex}}\cdots \ge_{\text{grevlex}}M_k$ and $M_1\cdots M_k=\mu$, defined by the recursive Algorithm~\ref{alg:BS}.  We verify the claimed properties of $\bs(\mu)$ in Lemma~\ref{lem:BorelSortProperties}.
\end{definition}

Given a monomial $\mu\in\Borel(M^k)$, we will see in Theorem~\ref{thm:LexOrderReesAlgebraPrincipleBorel} that the factorization produced by $\bs(M,\mu)$ is the smallest factorization of $\mu$ under lexicographic order whose factors are in $\Borel(M)$.  We first give an example and verify some basic properties of $\bs(M,\mu)$ which follow from Algorithm~\ref{alg:BS}.

\begin{remark}
As in~\cite{S96} and~\cite{DN99}, an effective way to visualize the factorization $\mu=\prod_{i=1}^k \mu_i$ is to consider $\mu_1,\ldots,\mu_k$ as the $k$ rows of a $k\times d$ tableaux, where $d$ is the degree of $\mu$.  Each entry of the tableaux is filled with a variable of the underlying polynomial ring, and the product of the variables in each row is $\mu_i$.  We illustrate this in Example~\ref{ex:BS}.
\end{remark}

\begin{remark}
An implementation of Algorithm~\ref{alg:BS} in the computer algebra system Macaulay2~\cite{M2} can be found under the Research tab on the first author's website, along with a script to verify Example~\ref{ex:BS}.
\end{remark}

\begin{algorithm}
	\KwIn{A monomial $M$ of degree $d$ and a monomial $\mu\in \Borel(M^k)$}
	\KwOut{A list $\{\mu_1,\ldots,\mu_k\}$ of monomials, the \blgsort{} of $\mu$}
	%	 $M_1>_{\text{grevlex}} M_2>_{\text{grevlex}}\cdots >_{\text{grevlex}}M_k$ of $k$ monomials in $\Borel(M)$, the 
	%\If{$k=1$}{\KwRet{\{$\mu$\}}}
	\If{$\mu=x_i^{dk}$ for some $1\le i\le n$}{
		\For{$1\le i\le k$}{$\mu_i\leftarrow x_i^d$}
	}
	\Else{
		$x_s\leftarrow$ variable with largest index dividing $\mu$\;
		$A\leftarrow$ exponent of $x_s$ in $\mu$\;
		$q\leftarrow$ quotient when $A$ is divided by $k$\;
		$r\leftarrow$ remainder when $A$ is divided by $k$\;
		\If{$r>0$}{
			$M_\up\leftarrow$ $\dfrac{1}{x_s^q}\cdot ($least monomial in $\Borel(M)$ under Borel order not divisible by $x_i$ for any $i>s$ and whose exponent on $x_s$ is $q)$\;
			$\mu_\up\leftarrow$ least monomial in $\Borel(M_\up^{k-r})$ under Borel order dividing $\mu$\;
			$M_\lw\leftarrow$ $\dfrac{1}{x_s^{q+1}}\cdot ($least monomial in $\Borel(M)$ under Borel order not divisible by $x_i$ for any $i>s$ and whose exponent on $x_s$ is $q+1)$\;
			$\mu_\lw\leftarrow$ $\mu/(\mu_\up\cdot x_s^A)$\;
			$\{U_1,\ldots,U_{k-r}\}\leftarrow\bs(M_\up,\mu_\up)$\;
			$\{D_1,\ldots,D_r\}\leftarrow \bs(M_\lw,\mu_\lw)$\;
			\For{$1\le i\le k-r$}{$\mu_i\leftarrow x_s^q\cdot U_i$}
			\For{$k-r< i\le k$}{$\mu_i\leftarrow x_s^{q+1}\cdot D_{i-k+r}$}
		}
		\Else{
			$M_\lft\leftarrow$ $\dfrac{1}{x_s^q}\cdot ($least monomial in $\Borel(M)$ under Borel order not divisible by $x_i$ for any $i>s$ and whose exponent on $x_s$ is $q)$\;
			$\mu_\lft\leftarrow\mu/x_s^A$\;
			$\{L_1,\ldots,L_k\}\leftarrow\bs(M_\lft,\mu_\lft)$\;
			\For{$1\le i\le k$}{
				$\mu_i\leftarrow x_s^q\cdot L_i$
			}
		}
	}
	\KwRet{\{$\mu_1,\ldots,\mu_k$\}}
	\caption{$\bs$\label{alg:BS}}
\end{algorithm}

\begin{example}[Illustration of Algorithm~\ref{alg:BS}]\label{ex:BS}
	Let $S=\K[x_0,x_1,x_2,x_3,x_4]$, $M=x_1x_3^2x_4^2$, and $\mu=x_0^2x_1^5x_2^{13}x_3^7x_4^3\in\Borel(M^6)$.  A computation in Macaulay2~\cite{M2} (see the script under the research tab on the first author's website) indicates that there are $4,742$ factorizations of $\mu$ into a product of six factors, each of which belongs to $\Borel(M)$.  As we will see in Theorem~\ref{thm:LexOrderReesAlgebraPrincipleBorel}, Algorithm~\ref{alg:BS} picks out the factorization which is minimal with respect to the lexicographic order on these factorizations (considered as ordered tuples).
	
	To visualize the factorization $\bs(M,\mu)=\{\mu_1,\ldots,\mu_6\}$ produced by Algorithm~\ref{alg:BS}, we fix a $6\times 5$ tableaux which we will fill with the variables of $\mu$ (see the top diagram in Figure~\ref{fig:BS}).  In the first level of recursion in Algorithm~\ref{alg:BS}, the variable with largest index dividing $\mu$ is $x_4$, which appears with exponent $3$ in $\mu$, hence $A=3$, $q=0$, and $r=3$.  At the end of the $r>0$ branch, which we will follow next, we see that $\mu_1,\mu_2,$ and $\mu_3$ do not get an $x_4$ while $\mu_3,\mu_4,$ and $\mu_5$ each receive a single $x_4$.  So we fill in the last entry of each of these rows with an $x_4$.  We split the remaining portion of the $6\times 5$ tableaux into two blocks - a $3\times 5$ `upper' tableaux and a $3\times 4$ `lower' tableaux (outlined in red and blue, respectively, in the top diagram of Figure~\ref{fig:BS}).
	
	We now follow the $r>0$ branch.  The least monomial under Borel order in $\Borel(M)$ not divisible by $x_4$ is $x_1x_3^4$, so $M_\up=x_1x_3^4$.  Now the least monomial in $\Borel(M_\up^{k-r})=\Borel(x_1^3x_3^{12})$ which divides $\mu$ is $\mu_\up=x_1^3x_2^5x_3^7$.  This latter monomial is the one which we will use to fill the `upper' $3\times 5$ tableaux (outlined in red in the top diagram of Figure~\ref{fig:BS}).  Thus we return to the beginning of the algorithm with the monomials $M_\up=x_1x_3^4$ and $\mu_\up=x_1^3x_2^5x_3^7$ in order to fill this upper tableaux.
	
	The remaining monomial $\mu_\lw=\mu/(\mu_\up x_4^3)=x_0^2x_1^2x_2^8$ is what we will use to fill the `lower' $3\times 4$ tableaux (outlined in blue in the top diagram of Figure~\ref{fig:BS}).  We again return to the beginning of the algorithm with the monomials $M_\lw=x_1x_3^2$ and $\mu_\lw=x_0^2x_1^2x_2^8$ in order to fill this lower tableaux.
	
	The recursion to fill the entire $6\times 5$ tableaux has three levels; the monomials $M_\up,\mu_\up,M_\lw,\mu_\lw$ or $M_\lft,\mu_\lft$ are shown for each level in a tree structure in the bottom diagram of Figure~\ref{fig:BS} (this should be read right to left).  The corresponding subdivisions of the tableaux are shown in the top diagram of Figure~\ref{fig:BS}.  The red, blue, and green outlines indicate (respectively) $\up,\lw,$ and $\lft$ recursive calls to Algorithm~\ref{alg:BS}.
	
	At the completion of Algorithm~\ref{alg:BS}, we can read off the monomials $\mu_1,\ldots,\mu_6$ by taking the product of the variables in the corresponding rows of the top diagram in Figure~\ref{fig:BS}.  We end up with $\mu_1=\mu_2=x_1x_2^2x_3^2,\mu_3=x_1x_2x_3^3,\mu_4=x_1^2x_2^2x_4,$ and $\mu_5=\mu_6=x_0x_2^3x_4$.  Thus
	\begin{multline*}
	\bs(x_1x_3^2x_4^2,x_0^2x_1^5x_2^{13}x_3^7x_4^3)= \\ \{x_1x_2^2x_3^2,x_1x_2^2x_3^2,x_1x_2x_3^3,x_1^2x_2^2x_4,x_0x_2^3x_4,x_0x_2^3x_4\}.
	\end{multline*}
	\begin{figure}
		\begin{tikzpicture}
		\foreach \x in {1,2,...,5}
		\foreach \y in {1,...,6}
		{
			\draw (\x,\y) +(-.5,-.5) rectangle ++(.5,.5);
		}
		\foreach \y in {1,2,3}
		{
			\node at (5,\y) {$x_4$};
		}
		\foreach \x in {4,5}
		\foreach \y in {4,5,6}
		{
			\node at (\x,\y) {$x_3$};
		}
		\node at (3,4){$x_3$};
		\foreach \x in {2,3}
		\foreach \y in {5,6}
		{
			\node at (\x,\y){$x_2$};
		}
		\foreach \x in {3,4}
		\foreach \y in {1,2,3}
		{
			\node at (\x,\y){$x_2$};
		}
		\foreach \y in {3,...,6}
		{
			\node at (1,\y) {$x_1$};
		}
		\node at (2,4){$x_2$};
		\node at (2,3){$x_1$};
		\foreach \y in {1,2}
		{
			\node at (2,\y){$x_2$};
		}
		\foreach \y in {1,2}
		{
			\node at (1,\y){$x_0$};
		}
		\foreach \y in {1,...,6}
		{
			\node at (0,7-\y) {$\mu_\y$};
		}
		\def\eps{.025}
		\def\fs{1.5}
		\def\fb{2.5}
		%First cycles
		\draw[very thick,red] (.5,3.5+\eps)--(5.5,3.5+\eps)--(5.5,6.5)--(.5,6.5)--cycle;
		\draw[very thick,blue] (.5,.5)--(4.5,.5)--(4.5,3.5-\eps)--(.5,3.5-\eps)--cycle;
		%Second cycles up
		\draw[very thick,red] (.5+\fs*\eps,4.5+\eps)--(3.5,4.5+\eps)--(3.5,6.5-\fs*\eps)--(.5+\fs*\eps,6.5-\fs*\eps)--cycle;
		\draw[very thick,blue] (.5+\fs*\eps,3.5+\fb*\eps)--(2.5,3.5+\fb*\eps)--(2.5,4.5-\eps)--(.5+\fs*\eps,4.5-\eps)--cycle;
		%Third cycles up
		\draw[very thick, green] (.5+\fb*\eps,4.5+\fb*\eps)--(1.5,4.5+\fb*\eps)--(1.5,6.5-\fb*\eps)--(.5+\fb*\eps,6.5-\fb*\eps)--cycle;
		\draw[very thick, green] (.5+\fb*\eps,3.5+1.6*\fb*\eps)--(1.5,3.5+1.6*\fb*\eps)--(1.5,4.5-\fb*\eps)--(.5+\fb*\eps,4.5-\fb*\eps)--cycle;
		%\draw[very thick, green] ()--cycle;
		%Second cycles down
		\draw[very thick,red] (.5+\fs*\eps,2.5+\eps)--(2.5,2.5+\eps)--(2.5,3.5-\fb*\eps)--(.5+\fs*\eps,3.5-\fb*\eps)--cycle;
		\draw[very thick, blue] (.5+\fs*\eps,.5+\fs*\eps)--(1.5,.5+\fs*\eps)--(1.5,2.5-\eps)--(.5+\fs*\eps,2.5-\eps)--cycle;
		\end{tikzpicture}
		
		\vspace{20 pt}
		
		\begin{tikzpicture}
		\tikzstyle{level 1}=[sibling distance=100 pt]
		\tikzstyle{level 2}=[sibling distance=50 pt]
		\node[rectangle, draw] {\parbox{75 pt}{$M=x_1x_3^2x_4^2$ \\[3 pt] $\mu=x_0^2x_1^5x_2^{13}x_3^7x_4^3$}}
		[level distance=70 pt]
		[grow=left]
		child {node[rectangle,draw=red,very thick] {\parbox{50 pt}{$M=x_1x_3^4$ \\[3 pt] $\mu=x_1^3x_2^5x_3^{7}$}}
			child {node[rectangle,draw=red,very thick] {\parbox{45 pt}{$M=x_1x_2^2$ \\[3 pt] $\mu=x_1^2x_2^4$}}
				child {node[rectangle,draw=green,very thick] {\parbox{35 pt}{$M=x_1$\\[3 pt] $\mu=x_1^2$}}}
			}
			child {node[rectangle,draw=blue,very thick] {\parbox{45 pt}{$M=x_1x_2$ \\[3 pt] $\mu=x_1x_2$}}
				child {node[rectangle,draw=green,very thick] {\parbox{35 pt}{$M=x_1$\\[3 pt] $\mu=x_1$}}}
			}
		}
		child {node[rectangle,draw=blue,very thick] {\parbox{50 pt}{$M=x_1x_3^3$ \\[3 pt] $\mu=x_0^2x_1^2x_2^{8}$}}
			child {node[rectangle,draw=red,very thick] {\parbox{35 pt}{$M=x_1^2$\\[3 pt] $\mu=x_1^2$}}}
			child {node[rectangle,draw=blue,very thick] {\parbox{35 pt}{$M=x_1$\\[3 pt] $\mu=x_0^2$}}}
		};
		\end{tikzpicture}
		\caption{Illustration of Algorithm~\ref{alg:BS} in Example~\ref{ex:BS}; this should be read right to left.  The red, blue, and green outlines indicate (respectively) $\up,\lw,$ and $\lft$ recursive calls to Algorithm~\ref{alg:BS}.}\label{fig:BS}
	\end{figure}
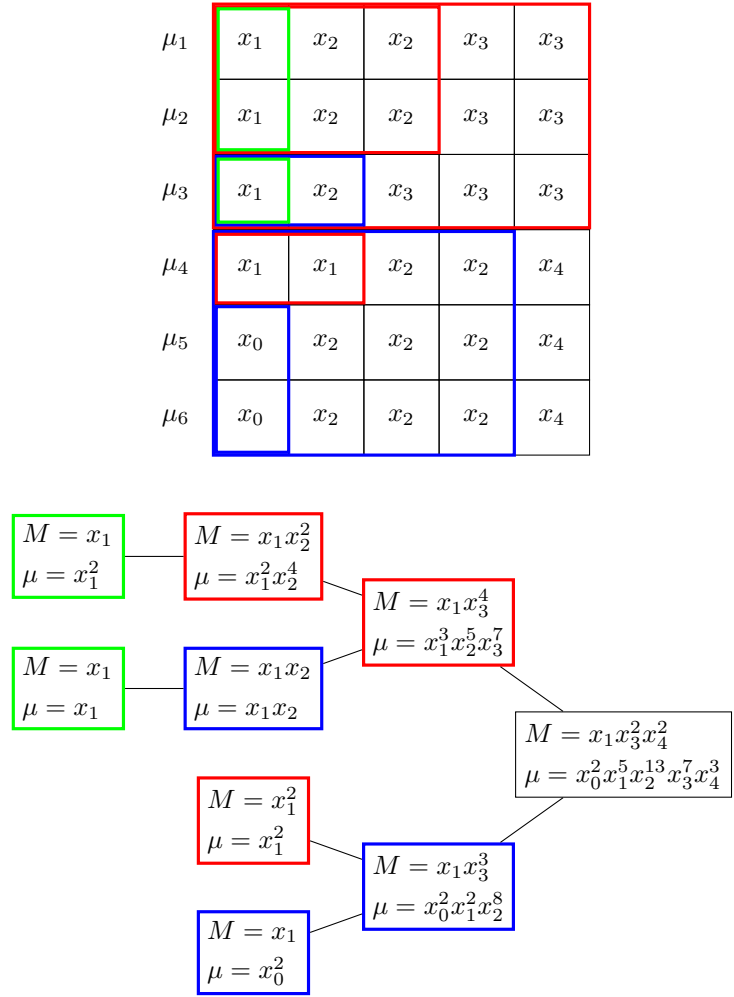

\end{example}

The following lemma shows that $M_\up,M_\lw,$ and $M_\lft$ are well-defined in Algorithm~\ref{alg:BS}.  That $\mu_\up$ is well-defined follows from Lemma~\ref{lem:BorelMultidegreeDivision}.

\begin{lemma}\label{lem:SelectingUpperAndLowerMonomials}
	Let $M,\mu\in \K[\x]$, and suppose $\mu\in \Borel(M^k)$.  Suppose that $x_s$ is the variable of largest index dividing $\mu$, $x_s$ appears in $\mu$ with exponent $A$, and $s>1$.  Put $A=qk+r$, where $q\in\mathbb{Z}_{\ge 0}$ and $0\le r<k$.  There is a unique minimal monomial $\gamma$ in $\Borel(M)$, under Borel order, satisfying
	\begin{enumerate}
		\item $x_i$ does not divide $\gamma$ for any $i>s$
		\item $x_s$ appears with exponent $q$ in $\gamma$ (if $r=0$) or with exponent $q+1$ in $\gamma$ (if $r>0$)
	\end{enumerate}
\end{lemma}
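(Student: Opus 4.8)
The plan is to produce the monomial $\gamma$ explicitly and then show it is the unique Borel-minimal element of the set $\mathcal{S}$ of monomials in $\Borel(M)$ satisfying (1) and (2). First I would reduce to a statement purely about cumulative exponents via Lemma~\ref{lem:BorelMembership}: a monomial $m=x_1^{b_1}\cdots x_n^{b_n}$ of degree $d$ lies in $\Borel(M)$ if and only if $\sigma_i(m)\le\sigma_i(M)$ for all $i$. Let $c$ denote the prescribed exponent on $x_s$ (so $c=q$ if $r=0$ and $c=q+1$ if $r>0$; note $c\le\sigma_s(M)$ because $A=\sigma_s(\mu)\le k\,\sigma_s(M)$ forces $q\le\sigma_s(M)$, and when $r>0$ the strict inequality $q<\sigma_s(M)$ holds, so $q+1\le\sigma_s(M)$). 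Condition (1) says $b_i=0$ for $i>s$, i.e. $\sigma_{s+1}(m)=0$ and $\sigma_s(m)=b_s=c$. So $\mathcal{S}$ consists exactly of monomials $m$ of degree $d$ supported on $x_1,\ldots,x_s$ with $x_s$-exponent equal to $c$, and with $\sigma_i(m)\le\sigma_i(M)$ for $i=1,\ldots,s-1$ (the constraints for $i\ge s$ being automatic: $\sigma_s(m)=c\le\sigma_s(M)$ and $\sigma_i(m)=0$ for $i>s$).

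Next I would check $\mathcal{S}\neq\emptyset$ and construct the candidate minimum. The degree left to distribute among $x_1,\ldots,x_{s-1}$ is $d-c$. Since making cumulative exponents \emph{smaller} corresponds to moving mass to \emph{smaller}-index variables (a Borel move $x_i/x_j$ with $i<j$ strictly decreases $\sigma_k$ for $i\le k\le j-1$ and leaves the others fixed, hence strictly \emph{decreases} a monomial in Borel order in the sense of Definition~\ref{def:BorelOrder}), the Borel-least element of $\mathcal{S}$ should pile as much as possible onto the lowest-index variables subject to the caps $\sigma_i(\cdot)\le\sigma_i(M)$. Concretely, define $\gamma$ greedily from $i=s-1$ downward: set the partial cumulative sums to be as small as the caps allow while still being able to fit the remaining degree below. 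The clean way to say it: let $\gamma$ be the unique monomial supported on $x_1,\ldots,x_s$ with $x_s$-exponent $c$ whose cumulative exponents are $\sigma_i(\gamma)=\min\bigl(\sigma_i(M),\,d-\textstyle\sum_{j<i}(\text{already forced mass})\bigr)$ — more precisely one shows there is a unique such $m\in\mathcal{S}$ that is coordinatewise-minimal in the vector $(\sigma_1(m),\ldots,\sigma_{s-1}(m))$, because the set of achievable cumulative-exponent vectors is a lattice polytope's integer points closed under coordinatewise min (each coordinate constraint $0\le\sigma_i\le\sigma_i(M)$ and the monotonicity $\sigma_1\ge\sigma_2\ge\cdots$ together with the fixed value $\sigma_s=c$ and total $\sigma_1=d$ are all preserved under coordinatewise minimum of two feasible vectors). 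Nonemptiness follows because the all-caps-active choice $\sigma_i=\min(\sigma_i(M),d)$ needs checking to be feasible; alternatively $M$ itself, after the reverse-Borel adjustments forced by (1) and (2), witnesses a member — but the cleanest argument is: the vector with $\sigma_i=\sigma_i(M)$ for $i\le s-1$ is feasible iff $\sigma_{s-1}(M)\ge c$, which holds since $\sigma_{s-1}(M)\ge\sigma_s(M)\ge c$, and $\sigma_1(M)=\deg M=d$, so this is exactly one feasible point, giving $\mathcal{S}\neq\emptyset$.

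Finally I would prove uniqueness and minimality together: if $m,m'\in\mathcal{S}$, then the coordinatewise minimum $v_i=\min(\sigma_i(m),\sigma_i(m'))$ satisfies all the defining constraints (each is a one-sided bound or the fixed values $v_s=c$, $v_1=d$, plus the chain condition $v_i\ge v_{i+1}$ which is preserved under min of two chains), so $v$ is the cumulative-exponent vector of some $m''\in\mathcal{S}$ with $m''\le_{\Borel} m$ and $m''\le_{\Borel}m'$ by Lemma~\ref{lem:BorelMembership} (recall $m''\le_{\Borel}m$ iff $\sigma_i(m'')\le\sigma_i(m)$ for all $i$, which is the content of Lemma~\ref{lem:BorelMembership}). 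Hence $\mathcal{S}$ is a down-directed subposet of a finite poset, so it has a unique minimal element, which is $\gamma$. I expect the main obstacle to be the bookkeeping in verifying that coordinatewise minimum of two feasible cumulative-exponent vectors is again feasible \emph{and} is realized by an honest monomial of degree $d$ (one must confirm $v_1=d$ is retained and that $v$ is nonincreasing, i.e. corresponds to nonnegative exponents $\sigma_i(m'')-\sigma_{i+1}(m'')\ge 0$); everything else is a direct translation through Lemma~\ref{lem:BorelMembership}. A minor subtlety worth isolating as a sub-claim is that $c\le\sigma_s(M)$ with strict inequality when $r>0$, which is exactly what makes condition (2) compatible with membership in $\Borel(M)$; this is where the hypothesis $\mu\in\Borel(M^k)$ enters, via $A=\sigma_s(\mu)\le\sigma_s(M^k)=k\,\sigma_s(M)$.
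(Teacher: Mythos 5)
The central issue is a consistent reversal of the direction of the Borel partial order, which causes your main argument to aim at the wrong monomial. By Definition~\ref{def:BorelOrder}, $m <_{\Borel} m'$ means $m'$ is \emph{obtained from} $m$ by Borel moves; and a Borel move $m\mapsto\frac{x_i}{x_j}m$ with $i<j$ \emph{decreases} $\sigma_k$ for $i<k\le j$ (not $i\le k\le j-1$ as you write). Hence $m<_{\Borel}m'$ implies $\sigma_i(m')\le\sigma_i(m)$: Borel-\emph{smaller} corresponds to \emph{larger} cumulative exponents, and Lemma~\ref{lem:BorelMembership} says $M\le_{\Borel}m$ iff $\sigma_i(m)\le\sigma_i(M)$, not the other way around. (You can confirm the direction on the paper's own example $x_1x_2x_3<_{\Borel}x_1^2x_2$, where $\sigma(x_1x_2x_3)=(3,2,1)\ge(3,1,0)=\sigma(x_1^2x_2)$.) Your parenthetical ``recall $m''\le_{\Borel}m$ iff $\sigma_i(m'')\le\sigma_i(m)$'' and your claim that a Borel move ``strictly decreases a monomial in Borel order'' are both the opposite of what the paper's conventions say, and so is the heuristic ``pile as much as possible onto the lowest-index variables.''

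The consequence is not cosmetic. Taking coordinatewise \emph{min} of the $\sigma$-vectors of $m,m'\in\mathcal{S}$ produces an $m''$ that is Borel-\emph{larger} than both, so your directedness argument proves that $\mathcal{S}$ has a unique Borel-maximal element, not a unique minimal one; and the candidate it points to, namely $\sigma_i=c$ for $2\le i\le s$ (realized by $x_1^{d-c}x_s^c$), is exactly the wrong extreme of $\mathcal{S}$, not the $\gamma$ of the lemma. The correct $\gamma$ has the \emph{largest} feasible cumulative exponents: $\sigma_i(\gamma)=\sigma_i(M)$ for $i<s$, $\sigma_s(\gamma)=c$, and $\sigma_j(\gamma)=0$ for $j>s$ --- which is precisely the vector you write down correctly in your nonemptiness paragraph, where you only label it a ``feasible point.'' That vector \emph{is} the paper's construction, and once you notice that every $\delta\in\mathcal{S}$ satisfies $\sigma_i(\delta)\le\sigma_i(\gamma)$ coordinatewise (by Lemma~\ref{lem:BorelMembership} and conditions (1),(2)), you get $\gamma\le_{\Borel}\delta$ directly, so $\gamma$ is the unique minimum with no lattice/directedness machinery needed. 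Your lattice idea is salvageable by flipping min to max throughout, but as written the proof establishes the wrong statement.
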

\begin{proof}
	Since $\mu\in \Borel(M^k)$, it follows that $qk+r=A=\sigma_s(\mu)\le \sigma_s(M^k)=k\sigma_s(M)$.  Hence if $r=0$, $q\le\sigma_s(M)$, and if $r>0$, $q+1\le \sigma_s(M)$.  Now define $\gamma$ by
	\begin{enumerate}
		\item $\sigma_j(\gamma)=0$ for $j>s$
		\item $\sigma_s(\gamma)=q$ (if $r=0$) or $\sigma_s(\gamma)=q+1$ (if $r>0$)
		\item $\sigma_i(\gamma)=\sigma_i(M)$ for $i<s$.
	\end{enumerate}
	Explicitly, $\gamma$ is not divisible by $x_i$ for $i>s$, the exponent of $x_s$ in $\gamma$ is $q$ (if $r=0$) or $q+1$ (if $r>0$), the exponent of $x_{s-1}$ in $\gamma$ is $\sigma_{s-1}(M)-q$ (if $r=0$) or $\sigma_{s-1}(M)-q-1$ (if $r>0$), and the exponent of $x_i$ ($i<s-2$) in $\gamma$ is the same as the exponent of $x_i$ in $M$.  The condition $s>1$ is necessary -- if $s=1$ we will not reach (3) and $\gamma$ will not have the correct degree.  By Lemma~\ref{lem:BorelMembership}, $\gamma\in\Borel(M)$.  Clearly $\gamma$ is minimal under Borel order with respect to the desired properties.
\end{proof}

%\begin{procedure}[\blgsort]

\begin{lemma}\label{lem:BorelSortProperties}
	Let $M$ be a monomial of degree $d$ and $\mu\in\Borel(M^k)$.  Then Algorithm~\ref{alg:BS} terminates on this input, returning $\bs(M,\mu)=\{\mu_1,\ldots,\mu_k\}$ with the properties:
	\begin{enumerate}
		\item\label{p:1} $\mu_1\cdots \mu_k=\mu$
		\item\label{p:2} $\mu_1\succeq_\grevlex\cdots\succeq_\grevlex \mu_k$
		\item\label{p:3} $\mu_i\in\Borel(M)$ for $i=1,\ldots,k$
	\end{enumerate}
\end{lemma}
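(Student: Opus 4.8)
The plan is to prove termination together with all three properties simultaneously by strong induction on $\deg\mu$, ranging over all triples $(M,k,\mu)$ with $\mu\in\Borel(M^k)$. This is legitimate — and at the same time yields termination — because every recursive call in Algorithm~\ref{alg:BS} is made on a monomial of strictly smaller degree: in the $r>0$ branch $\deg\mu_\up=(k-r)(d-q)\le(k-1)d<kd=\deg\mu$ and $\deg\mu_\lw<\deg\mu$ (as $A\ge 1$), while in the $r=0$ branch $A=qk$ with $q\ge 1$, so $\deg\mu_\lft=\deg\mu-qk<\deg\mu$. For the base case $\mu=x_\ell^{dk}$ (which includes $\deg\mu=0$), the output is $\mu_i=x_\ell^d$ for every $i$; then (1) and (2) are immediate, and (3) holds because $\mu\in\Borel(M^k)$ forces $\sigma_\ell(M)=d$ (hence $\sigma_i(x_\ell^d)\le\sigma_i(M)$ for all $i$), so $x_\ell^d\in\Borel(M)$ by Lemma~\ref{lem:BorelMembership}.

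For the inductive step we may assume $\mu$ is not a pure power of a single variable, so the largest-index variable $x_s$ dividing $\mu$ has $s>1$ (if $s=1$, then $\mu\in\Borel(M^k)$ forces $\mu=x_1^{dk}$, the base case). The first task is to confirm that every recursive call receives a valid input. The monomials $M_\up,M_\lw,M_\lft$ exist by Lemma~\ref{lem:SelectingUpperAndLowerMonomials}, and a one-line estimate from $\sigma_s(\mu)=A\le k\sigma_s(M)$ gives $q<d$ in the $r>0$ branch (otherwise $r=0$), so $\deg M_\up=\deg M_\lw=d-q\ge 1$. That $\mu_\lft=\mu/x_s^A$ lies in $\Borel(M_\lft^k)$, and that $\mu_\lw=\mu/(\mu_\up x_s^A)$ lies in $\Borel(M_\lw^r)$, are each verified through Lemma~\ref{lem:BorelMembership} by substituting the cumulative exponents $\sigma_i(M_\lft)=\sigma_i(M)-q$ and $\sigma_i(M_\lw)=\sigma_i(M)-q-1$ (for $i<s$) supplied by Lemma~\ref{lem:SelectingUpperAndLowerMonomials}; after cancellation every such inequality collapses to $\sigma_i(\mu)\le k\sigma_i(M)$, which holds since $\mu\in\Borel(M^k)$. (The $\mu_\lw$ check additionally invokes Lemma~\ref{lem:BorelMultidegreeDivision}: the Borel-minimal divisor $\mu_\up$ of $\mu$ in $\Borel(M_\up^{k-r})$ simultaneously attains the maximal cumulative exponents among all such divisors, which supplies the lower bound on $\sigma_i(\mu_\up)$ that is needed.)

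I expect the genuine obstacle to be the well-definedness of $\mu_\up$ itself, i.e.\ that $\Borel(M_\up^{k-r})$ actually contains a monomial dividing $\mu$; once this is known, the minimal such monomial exists by Lemma~\ref{lem:BorelMultidegreeDivision}. I would produce such a monomial $N$ explicitly: scan the variables $x_1,x_2,\dots$ in increasing order and put into $N$ all of the copies of $x_j$ ($j<s$) occurring in $\mu$, stopping once $\deg N$ reaches $(k-r)(d-q)=\deg M_\up^{k-r}$; there is room since $\deg(\mu/x_s^A)=kd-A=k(d-q)-r\ge(k-r)(d-q)$. Then $N\mid\mu$ by construction, and for each $j<s$ one computes $\sigma_j(N)=(k-r)(d-q)-(kd-\sigma_j(\mu))$ whenever this is positive, which is $\le(k-r)(\sigma_j(M)-q)=\sigma_j(M_\up^{k-r})$ exactly because $r\bigl(d-\sigma_j(M)\bigr)\ge 0$; hence $N\in\Borel(M_\up^{k-r})$ by Lemma~\ref{lem:BorelMembership}.

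Finally, writing $\{U_i\}=\bs(M_\up,\mu_\up)$, $\{D_i\}=\bs(M_\lw,\mu_\lw)$ (resp.\ $\{L_i\}=\bs(M_\lft,\mu_\lft)$) for the recursive outputs, which satisfy (1)--(3) by the induction hypothesis, I assemble the three properties for $\bs(M,\mu)$. Property (1) is the bookkeeping identity $(k-r)q+r(q+1)=A$, which yields $\prod_i\mu_i=x_s^{A}\mu_\up\mu_\lw=\mu$. For (3), $x_s^{q}M_\up$ and $x_s^{q+1}M_\lw$ lie in $\Borel(M)$ by the very construction of $M_\up,M_\lw$; since each $U_i$ is obtained from $M_\up$ by Borel moves on variables of index $<s$, applying those same moves to $x_s^{q}M_\up$ gives $\mu_i=x_s^{q}U_i\in\Borel(M)$, and likewise for the $D_i$ (the $r=0$ case is identical). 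For (2), within a single block the inequality is inherited from the induction hypothesis after multiplying through by the shared power of $x_s$; the one new comparison, $\mu_{k-r}\succeq_\grevlex\mu_{k-r+1}$ across the two blocks, holds because these two monomials have the same degree $d$, carry exponent $0$ on every variable of index exceeding $s$, and have $x_s$-exponents $q$ and $q+1$ respectively, so the last nonzero entry of their exponent difference is $-1<0$. This completes the induction.
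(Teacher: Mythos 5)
Your overall approach --- strong induction on a complexity measure of $\mu$, a base case when $\mu$ is a pure power, and an assembly of (1)--(3) from the recursive outputs --- mirrors the paper's proof; the change from the paper's ``number of variables dividing $\mu$'' to $\deg\mu$ as the induction variable is a valid and equally effective variant.  You are also right that there is a well-definedness issue that the paper itself passes over without comment: before the induction hypothesis can be invoked on the recursive calls one must know that there exists a monomial in $\Borel(M_\up^{k-r})$ dividing $\mu$ (so that $\mu_\up$ is defined), and that $\mu_\lft\in\Borel(M_\lft^k)$ and $\mu_\lw\in\Borel(M_\lw^r)$.  Your greedy witness $N$ correctly settles the first of these, and the $\sigma$-cancellation argument for $\mu_\lft$ is correct.

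However, the $\mu_\lw$ verification has a genuine gap.  Expanding $\sigma_j(\mu_\lw)\le r\,\sigma_j(M_\lw)$ via $\sigma_j(M_\lw)=\sigma_j(M)-q-1$ gives
\[
\sigma_j(\mu)-\sigma_j(\mu_\up)\;\le\;r\,\sigma_j(M)+q(k-r),
\]
which does \emph{not} ``collapse to $\sigma_j(\mu)\le k\,\sigma_j(M)$'': the unknown $\sigma_j(\mu_\up)$ remains, and your parenthetical appeal to ``the lower bound on $\sigma_i(\mu_\up)$ that is needed'' is not supplied.  The only lower bound you have produced is $\sigma_j(\mu_\up)\ge\sigma_j(N)$ from the greedy witness, and that is quantitatively insufficient: plugging in $\sigma_j(N)=\max\bigl(0,(k-r)(d-q)-kd+\sigma_j(\mu)\bigr)$ reduces the required inequality to $d\le\sigma_j(M)$, which fails for any $j$ with a variable of smaller index occurring in $M$.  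Concretely, with $M=x_1x_3^2$, $k=2$, $\mu=x_1^3x_2^2x_3$ one gets $N=x_1^3$ and $\sigma_2(N)=0$, while $\sigma_2(\mu_\up)\ge 1$ is actually required (here $\mu_\up=x_1x_2^2$ with $\sigma_2=2$, so the conclusion happens to be true, but your estimate does not reach it).  A correct way to supply the bound is to first \emph{balance} a factorization $\mu=m_1\cdots m_k$ (with $m_i\in\Borel(M)$, which exists by Lemma~\ref{lem:BorelFactorization}) so that each $x_s$-exponent is $q$ or $q+1$: whenever two exponents differ by at least~$2$, one Borel move together with one reverse Borel move on the offending pair re-balances while staying inside $\Borel(M)$ (taking $x_\ell$ to be the largest index $<s$ dividing the smaller factor makes the reverse move legal).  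Writing $\nu_\up$, $\nu_\lw$ for the products of the stripped-down factors, one gets $\nu_\up\in\Borel(M_\up^{k-r})$ dividing $\mu$ (re-proving existence of $\mu_\up$), $\nu_\lw\in\Borel(M_\lw^r)$, and $\mu/x_s^A=\nu_\up\nu_\lw=\mu_\up\mu_\lw$; since $\sigma_j(\nu_\up)\le\sigma_j(\mu_\up)$ by minimality of $\mu_\up$, one concludes $\sigma_j(\mu_\lw)=\sigma_j(\nu_\up)+\sigma_j(\nu_\lw)-\sigma_j(\mu_\up)\le\sigma_j(\nu_\lw)\le r\,\sigma_j(M_\lw)$, as required.
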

\begin{proof}
	Algorithm~\ref{alg:BS} terminates because the number of variables in $\mu$ is reduced by one during each step of the recursion, which will eventually trigger the leading `if' statement.
	
	We prove \eqref{p:1},~\eqref{p:2} and~\eqref{p:3} by induction on the number of variables dividing $\mu$.  If $\mu$ is only divisible by a single variable, so $\mu=x_i^{dk}$ for some $1\le i\le n$, then $\mu_1=\mu_2=\cdots=\mu_k=x_i^d$.  Statements~\eqref{p:1} and~\eqref{p:2} are clear, so we need only show that $x_i^d\in\Borel(M)$.  By Lemma~\ref{lem:BorelMembership}, $\sigma_i(\mu)=dk\le \sigma_i(M^k)$.  Since this is as large as $\sigma_i(\mu)$ can be for a monomial of degree $dk$, $\sigma_i(M^k)=dk$.  It follows that $\sigma_i(x_i^d)=\sigma_i(M)=d$.  Moreover, $\sigma_j(x_i^d)=d=\sigma_j(M)$ for any $j<i$ and $\sigma_j(x_i^d)=0\le \sigma_j(M)$ for any $j>i$.  It follows from Lemma~\ref{lem:BorelMembership} that $x_i^d\in \Borel(M)$.
	
	Now suppose that $k>1$ and $\mu$ is divisible by more than one variable.  Let $s$ be the largest index of a variable dividing $\mu$, let $A$ be the exponent of $x_s$ in $\mu$, and put $A=qk+r$ (as in Algorithm~\ref{alg:BS}).  First we show~\eqref{p:1}.  If $r>0$, then by induction on the number of variables, $U_1\cdots U_{k-r}=\mu_\up$ and $D_1\cdots D_r=\mu_\lw$.  Since $\mu_i=x_s^q\cdot U_i$ for $1\le i\le k-r$ and $\mu_i=x_s^{q+1}\cdot D_{i-k+r}$ for $k-r< i\le k$,
	\[
	\mu_1\ldots \mu_k=x_s^{qk+r}\mu_\up\mu_\lw=\mu_\lw(x_s^A\cdot \mu_\up)=\mu.
	\]
	If $r=0$ then by induction $L_1\cdots L_k=\mu_\lft$.  So $\mu_1\cdots \mu_k=x_s^{qk}\mu_\lft=\mu$.
	
	Now we show~\eqref{p:2}.  If $r>0$, then by induction $U_1\succeq_\grevlex \cdots \succeq_\grevlex U_{k-r}$ and $D_1\succeq_\grevlex\cdots\succeq_\grevlex D_r$.  Thus $\mu_1\succeq_\grevlex\cdots\succeq_\grevlex \mu_{k-r}$ and $\mu_{k-r+1}\succeq_\grevlex\cdots \succeq_\grevlex \mu_{k}$.  Moreover, the least variable dividing $\mu_{k-r}$, namely $x_s$, has exponent $q$ in $\mu_{k-r}$ and exponent $q+1$ in $\mu_{k-r+1}$; hence $\mu_{k-r}\succeq_\grevlex \mu_{k-r+1}$.  Now suppose $r=0$.  By induction $L_1\succeq_{\grevlex}\cdots \succeq_\grevlex L_k$.  Hence $\mu_1\succeq_{\grevlex}\cdots\succeq_\grevlex \mu_k$.
	
	Finally we show~\eqref{p:3}.  If $r>0$, then by induction $U_i\in\Borel(M_\up)$ for $i=1,\ldots,k-r$ and $D_i\in \Borel(M_\lw)$ for $i=1,\ldots,r$.  Since $x_s^qM_\up\in \Borel(M)$ (by its definition in Algorithm~\ref{alg:BS}) it follows that $\mu_i=x_s^q\cdot U_i\in\Borel(M)$ for $i=1,\ldots,k-r$.  Likewise, $x_s^{q+1}M_\lw\in\Borel(M)$ (again by its definition in Algorithm~\ref{alg:BS}) so it follows that $\mu_i=x_s^{q+1}D_{i-k+r}\in\Borel(M)$ for $i=k-r+1,\ldots,k$.  If $r=0$ then $L_1,\ldots,L_k\in\Borel(M_\lw)$ by induction.  Since $x_s^qM_\lw\in\Borel(M)$ (by its definition in Algorithm~\ref{alg:BS}), it follows that $\mu_i=x_s^qL_i\in\Borel(M)$ for $i=1,\ldots,k$.
\end{proof}

Now we return to the toric ring.  Let $M\in \K[\x]$ be a fixed monomial, $I=\langle\Borel(M)\rangle\subset \K[\x]$, $\mu\in \K[\x]$, $\bT=\{T_m:m\in\Borel(M)\}$, and $\phi:\K[\bT]\to \K[\x]$ be defined by $\phi(T_m)=m$.  Order the variables of $\K[\bT]$ by $T_{m}\succ T_{m'}$ if $m\succ_{\grevlex} m'$.  Let $\prec$ be the lexicographic monomial order on $\K[\bT]$ with respect to this ordering of the variables.  Set 
\begin{equation}\label{eq:ReesAlgebraGB}
\calB(M)=\{T_m T_n-T_{\frac{x_i}{x_j}m}T_{\frac{x_j}{x_i}n}:x_j\mid m,x_i\mid n,\mbox{ and }\frac{x_i}{x_j}m,n\in\Borel(M)\}.
\end{equation}
Let $\vec{\Gamma}_{\mu,\calB}$ be the directed graph associated to $\mu$ and $\calB$ (see Definition~\ref{def:FiberGraph}).  Notice that by Lemma~\ref{lem:BorelFactorization}, $\Gamma_{\mu,\calB}$ is empty unless $\mu\in\Borel(M^k)$ for some $k\ge 1$.

\begin{definition}\label{def:LexSink}
	Fix monomials $M\in \K[\x],\mu\in\Borel(M^k)$, and the toric map $\phi:\K[T_m:m\in\Borel(M)]\to \K[\x]$ defined by $\phi(T_m)=m$.  Let $\bs(M,\mu)=\{\mu_1,\ldots,\mu_k\}$.  We define $\bT^\mu_{\min}=\prod_{i=1}^k T_{\mu_i}$.
\end{definition}

\begin{theorem}\label{thm:LexOrderReesAlgebraPrincipleBorel}
	Fix monomials $M\in \K[\x],\mu\in\Borel(M^k)$, and the toric map $\phi:\K[T_m:m\in\Borel(M)]\to \K[\x]$ defined by $\phi(T_m)=m$.  We put the monomial order $\prec_\grevlex$ on $\K[\x]$.  This induces the order $T_m\succ T_n$ if $m\succ_\grevlex n$ on the variables of $\K[\bT]=\K[T_m:m\in\Borel(M)]$.  On top of this ordering of the variables we put the lexicographic monomial order $\prec_\lex$ on $\K[\bT]$.  Then $\bT^\mu_{\min}$ is the unique sink in $\vec{\Gamma}_{\mu,\calB}$, where $\calB=\calB(M)$ is the set in~\eqref{eq:ReesAlgebraGB}.  In particular, $\calB$ is a Gr\"obner basis for $J_{\K[I]}$ with respect to $\prec_\lex$, where $I=\langle \Borel(M)\rangle$.  Additionally, the set of quadrics
	\[
	%\{T_mT_n-\bT^{\mu}_{\min}:m,n\in\Borel(M),\mu=mn\}
	\{T_mT_n-T_{\mu_1}T_{\mu_2}:m,n\in\Borel(M),\bs(M,mn)=\{\mu_1,\mu_2\}\}
	\]
	is also a Gr\"obner basis for $J_{\K[I]}$ with respect to $\prec_\lex$.
\end{theorem}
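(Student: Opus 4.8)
The plan is to invoke Proposition~\ref{prop:Graph}: it suffices to show that for every monomial $\mu$ the directed fiber graph $\vec{\Gamma}_{\mu,\calB}$ (with $\calB=\calB(M)$) is either empty or has a unique sink, and to identify that sink as $\bT^\mu_{\min}$. By Lemma~\ref{lem:BorelFactorization} the graph is empty unless $\mu\in\Borel(M^k)$ for some $k$, so fix such a $k$. I would argue by induction on the number of variables dividing $\mu$, paralleling the recursion in Algorithm~\ref{alg:BS}. Throughout I will use one elementary observation: if $x_s$ is the largest-index variable dividing $\mu$, then every monomial dividing $\mu$ lies in $\K[x_1,\dots,x_s]$, and $\prec_\grevlex$ compares two such monomials \emph{first} by the exponent of $x_s$, a smaller exponent yielding the grevlex-larger monomial and hence the larger $\bT$-variable; consequently the $\prec_\lex$-leading variable of a product $\prod_i T_{m_i}$ is $T_{m_j}$ for whichever factor $m_j$ carries the fewest copies of $x_s$ (ties broken lower down).

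The heart of the argument is the claim: \emph{if $\mu=m_1\cdots m_k$ with each $m_i\in\Borel(M)$ and $\prod_i T_{m_i}\neq\bT^\mu_{\min}$, then $\prod_i T_{m_i}$ has an outgoing edge in $\vec{\Gamma}_{\mu,\calB}$.} Since $\prec_\lex$ is a monomial order, $\vec{\Gamma}_{\mu,\calB}$ is acyclic, so once the claim holds a downhill walk from any vertex terminates, necessarily at $\bT^\mu_{\min}$; thus $\bT^\mu_{\min}$ is the unique sink, and (as in the proof of Proposition~\ref{prop:Graph}) it is the $\prec_\lex$-least element of $S_\mu$. To prove the claim, write $A=qk+r$ ($0\le r<k$) for the exponent of $x_s$ in $\mu$ and put $e_i=\deg_{x_s}(m_i)$, so $\sigma_s(m_i)=e_i$. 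If the multiset $(e_1,\dots,e_k)$ is \emph{not} the balanced multiset $(q^{\,k-r},(q+1)^{\,r})$ produced by Algorithm~\ref{alg:BS}, an averaging argument (together with $\mu\in\Borel(M^k)$) produces indices $a,b$ with $e_a\ge e_b+2$ and $e_b<\sigma_s(M)$. Then Observation~\ref{lem:ReverseBorelMoveCriterion} gives $p<s$ with $x_p\mid m_b$ and $\tfrac{x_s}{x_p}m_b\in\Borel(M)$; since $\tfrac{x_p}{x_s}m_a\in\Borel(M)$ is a Borel move, the binomial $T_{m_a}T_{m_b}-T_{\frac{x_p}{x_s}m_a}T_{\frac{x_s}{x_p}m_b}$ lies in $\calB(M)$. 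Among the four monomials involved the exponents of $x_s$ are $e_a,e_b,e_a-1,e_b+1$, and as $e_a-1\ge e_b+1>e_b$ the monomial $m_b$ uniquely minimizes this exponent; hence $T_{m_b}$ is the $\prec_\lex$-top $\bT$-variable involved and its multiplicity drops, so this binomial gives an outgoing edge.

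If instead the $e_i$ are balanced, I divide $x_s^q$ out of the ``upper'' factors (those with $e_i=q$) and $x_s^{q+1}$ out of the ``lower'' factors; using Lemmas~\ref{lem:BorelMembership} and~\ref{lem:BorelFactorization} together with the explicit monomials of Lemma~\ref{lem:SelectingUpperAndLowerMonomials}, this yields a factorization of some $\nu_\up\mid\mu$ into $k-r$ elements of $\Borel(M_\up)$ and of $\nu_\lw$ into $r$ elements of $\Borel(M_\lw)$, with $\nu_\up\nu_\lw=\mu/x_s^A$. If $\nu_\up$ is not the Borel-least monomial of $\Borel(M_\up^{\,k-r})$ dividing $\mu$ (the quantity $\mu_\up$ of Algorithm~\ref{alg:BS}), then Lemma~\ref{lem:BorelFactorizationMultidegreeDivision} supplies a reverse-Borel modification of one upper factor whose extra variable is furnished by a lower factor; the combined move is a $\calB(M)$-exchange, and because the modified upper factor still carries only $q$ copies of $x_s$ it stays grevlex-largest, so the exchange is downhill. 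Otherwise $\nu_\up=\mu_\up$ (which forces $\nu_\lw=\mu_\lw$), and since $\prod_iT_{m_i}\neq\bT^\mu_{\min}$ at least one of the two sub-factorizations is not the output of Algorithm~\ref{alg:BS}; by the inductive hypothesis it has an outgoing edge, and reinserting the common power of $x_s$ into each factor turns this into a $\calB(M)$-exchange, still downhill because multiplying by a fixed monomial preserves grevlex (hence $\prec_\lex$) comparisons. The case $r=0$ is identical via the ``left'' branch. The main obstacle is exactly this case analysis: the exchanges are forced on us by Observation~\ref{lem:ReverseBorelMoveCriterion} and Lemma~\ref{lem:BorelFactorizationMultidegreeDivision}, and the real work is verifying that each one strictly lowers $\prec_\lex$ — which is where the ``fewer copies of $x_s$ $\Rightarrow$ grevlex-larger'' observation does the heavy lifting.

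This proves $\bT^\mu_{\min}$ is the unique sink of every $\vec{\Gamma}_{\mu,\calB}$, so $\calB(M)$ is a Gröbner basis for $J_{\K[I]}$ under $\prec_\lex$. For the final assertion, I would note that the outgoing edge found in the claim is always a single Borel exchange $T_{m_i}T_{m_j}-T_{m_i'}T_{m_j'}$, so $T_{m_i}T_{m_j}$ fails to be the $\prec_\lex$-least element of $S_{m_im_j}$, i.e.\ the pair $\{m_i,m_j\}$ is not its own $\bs$-output. Combined with the $k=2$ case already proved, this shows $\prod_iT_{m_i}=\bT^\mu_{\min}$ if and only if every pair of its factors is sorted ($\bs(M,m_im_j)=\{m_i,m_j\}$): one direction is the contrapositive just noted, and for the other, an unsorted pair in $\bT^\mu_{\min}$ would, upon replacing $T_{m_i}T_{m_j}$ by $T_{\mu_1}T_{\mu_2}$ with $\bs(M,m_im_j)=\{\mu_1,\mu_2\}$, produce a strictly $\prec_\lex$-smaller element of $S_\mu$, contradicting minimality of $\bT^\mu_{\min}$. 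Hence in the fiber graph taken with respect to the sorted quadrics $\{T_mT_n-T_{\mu_1}T_{\mu_2}:\bs(M,mn)=\{\mu_1,\mu_2\}\}$, the vertex $\bT^\mu_{\min}$ is still a sink while every other vertex still has an outgoing edge (coming from an unsorted pair), so by Proposition~\ref{prop:Graph} the sorted quadrics are a Gröbner basis for $J_{\K[I]}$ under $\prec_\lex$ as well.
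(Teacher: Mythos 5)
Your proposal is correct and follows essentially the same approach as the paper: induction on the number of variables dividing $\mu$, first showing that a non-sink has unbalanced $x_s$-exponents or mismatched $\mu^N_\up$ (both via Observation~\ref{lem:ReverseBorelMoveCriterion} and Lemma~\ref{lem:BorelFactorizationMultidegreeDivision}), then reducing via the $\up/\lw/\lft$ split of Algorithm~\ref{alg:BS} and applying the inductive hypothesis, exactly as the paper does through Discussion~\ref{disc:1}. The only cosmetic deviation is the last step, where the paper notes that $\calB(M)$ and the sorted quadrics span the same $\K$-vector space, while you re-derive the unique-sink property directly for the sorted-quadric fiber graph; both are valid one-line arguments once the $k=2$ case is in hand.
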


\begin{discussion}\label{disc:1}
	In the proof of Theorem~\ref{thm:LexOrderReesAlgebraPrincipleBorel} we will use the notation of Algorithm~\ref{alg:BS} to break apart the various inductive steps.  In order to streamline the proof, we discuss here how the fiber graphs of the different monomials in Algorithm~\ref{alg:BS} interact with each other.
	
	For fixed monomials $M$ and $\mu\in\Borel(M^k)$, we let $x_s$,$A$,$q$, and $r$ be as in Algorithm~\ref{alg:BS}.  If $r>0$ we have the monomials $M_\up,\mu_\up,M_\lw,$ and $\mu_\lw$ and if $r=0$ we have the monomials $M_\lft$ and $\mu_\lft$ as in Algorithm~\ref{alg:BS}.
	
	If $r=0$ we write $\bT_\lft$ for $\{T_\ell:\ell\in\Borel(M_\lft)$ and we define the injective map $\phi_\lft:\K[\bT_\lft]_{\mu_\lft}\to \K[\bT]_\mu$ by
	\[
	\phi_\lft(\prod_{i=1}^k T_{\ell_i})=\prod_{i=1}^k T_{\ell_ix_s^q},
	\]
	extended linearly.  Under this map, it follows from Definition~\ref{def:LexSink} and Algorithm~\ref{alg:BS} that $\phi_\lft(\bT^{\mu_\lft}_{\min})=\bT^\mu_{\min}$.
	
	Let $\calB_\lft=\calB(\mu_\lft)$ be defined as in~\eqref{eq:ReesAlgebraGB}.  Associated to the map $\phi_\lft$ we can identify $\vec{\Gamma}_{\mu_\lft,\calB_\lft}$ with a directed subgraph of $\vec{\Gamma}_{\mu,\calB}$ which we will denote by $\vec{\Gamma}_\lft$.  To obtain $\vec{\Gamma}_\lft$ simply apply the map $\phi_\lft$ to the vertices of $\vec{\Gamma}_{\mu\lft,\calB_\lft}$ and connect two vertices with an edge in $\vec{\Gamma}_\lft$ if their preimages are connected by an edge.  We claim $\vec{\Gamma}_\lft$ is a subgraph of $\vec{\Gamma}_{\mu,\calB}$; to see this, notice that if
	\[
	B=T_\ell T_{\ell'}-T_{\frac{x_i}{x_j}\ell}T_{\frac{x_j}{x_i}\ell'}\in\calB_\lft,
	\]
	then $\phi_\lft(B)\in\calB$.  Thus a directed edge between vertices of $\vec{\Gamma}_{\mu_\lft,\calB_\lft}$ gets sent by $\phi_\lft$ to a directed edge between vertices of $\vec{\Gamma}_{\mu,\calB}$.
	
	We now define analagous maps if $r>0$.  We write $\bT_\up$ and $\bT_\lw$ for the sets of variables $\{T_m:m\in\Borel(M_\up)\}$ and $\{T_n:n\in\Borel(M_\lw)\},$ respectively.  We define an injective map $\phi_{\up,\lw}:\K[\bT_\up]_{\mu_\up}\times \K[\bT_\lw]_{\mu_\lw}\to \K[\bT]_\mu$ by 
	\[
	\phi_{\up,\lw}(\prod_{i=1}^{k-r} T_{m_i},\prod_{i=k-r+1}^k T_{n_i})=\prod_{i=1}^{k-r} T_{m_ix_s^{q}}\prod_{i=k-r+1}^{k} T_{n_ix_s^{q+1}},
	\]
	extended linearly.  Under this map, it follows from Definition~\ref{def:LexSink} and Algorithm~\ref{alg:BS} that $\phi_{\up,\lw}(\bT^{\mu_\up}_{\min},\bT^{\mu_\lw}_{\min})=\bT^\mu_{\min}$.
	
	Let $\calB_\up=\calB(M_\up)$ and $\calB_\lw=\calB(M_\lw)$ be as in~\eqref{eq:ReesAlgebraGB}.  Just as in the case $r=0$, we can use the map $\phi_{\up,\lw}$ to identify the Cartesian product $\vec{\Gamma}_{\mu_\up,\calB_\up}\times \vec{\Gamma}_{\mu_\lw,\calB_\lw}$ with a subgraph of $\Gamma_{\mu,\calB}$, which we denote as $\vec{\Gamma}_{\up,\lw}$.
\end{discussion}

\begin{proof}[Proof of Theorem~\ref{thm:LexOrderReesAlgebraPrincipleBorel}]
	We will show by induction on the number of variables dividing $\mu$ that $\bT^\mu_{\min}$ is the unique sink in $\vec{\Gamma}_{\mu,\calB}$.  First suppose that $\mu$ is only divisible by a single variable.  Then $\bT^\mu_{\min}$ is the unique monomial in $\K[\bT]$ so that $\phi(\bT^\mu_{\min})=\mu$, hence it is of course the unique sink.
	
	Now suppose that $\mu$ is divisible by more than one variable and let $x_s$ be the variable of largest index dividing $\mu$, $A$ the exponent of $x_s$ in $\mu$, and set $A=qk+r$ as in Algorithm~\ref{alg:BS}.  Further suppose that $N=\{N_1,\ldots,N_k\}$, $N_1\succeq_\grevlex\cdots \succeq_\grevlex N_k$, and $\bT_N=\prod_{i=1}^k T_{N_i}$ satisfies $\phi(\bT_N)=\mu$ and $\bT_N$ is a sink.  We will show that $\bT_N=\bT^\mu_{\min}$.
	
	First suppose that there exist indices $i<j$ so that $x_s$ appears in $N_i$ with exponent $n_i$, $x_s$ appears in $N_j$ with exponent $n_j$, and $n_j-n_i\ge 2$.  Now $N_j$ divides $\mu$, $\sigma_s(N_j)>\sigma_s(N_i)$, and hence $\sigma_s(M')>\sigma_s(N_i)$, where $M'$ is the unique minimal monomial in $\Borel(M)$ under Borel order which divides $\mu$.  Clearly $s$ is the largest possible index where we can have $\sigma_s(N_i)<\sigma_s(M')$, as $x_s$ is the variable of largest index dividing $\mu$.  It follows from Lemma~\ref{lem:BorelMultidegreeDivision} that $N_i$ must be divisible by some variable $x_h$ where $h<s$ and $\frac{x_s}{x_h}N_i$ divides $\mu$ and is in $\Borel(M)$.  Let
	\[
	\bT_{N'}=T_{\frac{x_s}{x_h}N_i}T_{\frac{x_h}{x_s}N_j}\prod_{u\neq i,j}T_{N_u}.
	\]
	Clearly $\frac{x_s}{x_h}N_i\preceq_\grevlex N_i$ and $\frac{x_h}{x_s}N_j\preceq_\grevlex N_i$ since the exponent of $x_s$ in $N_i$ is smaller in both cases.  It follows that
	\[
	\bT_{N'}\prec \bT_{N},
	\]
	hence the directed edge $\bT_N\to\bT_{N'}$ appears in $\vec{\Gamma}_{\mu,\calB}$ and $\bT_N$ is not a sink.  Thus if $\bT_N$ is a sink, we may assume that the exponents of $x_s$ in $N_i$ for $i=1,\ldots,k$ differ by at most one.  We show that the exponents must be either $q$ or $q+1$.  Suppose there exists an index $i$ so that the exponent of $x_s$ in $N_i$ is $n_i$ and $n_i<q$.  Then, by the pigeonhole principle, there is an index $j$ so that the exponent of $x_s$ in $N_j$, call it $n_j$, satisfies $n_j> q$.  Since $n_j-n_i\ge 2$, $\bT_N$ is not a sink.  Similarly, suppose there is an index $j$ so that $n_j>q+1$.  Then there is again an index $i$ so that $n_i<q+1$ and $\bT_N$ is not a sink.  So the exponent of $x_s$ must be either $q$ or $q+1$ in each $N_i$.
	
	If $r=0$, we may thus assume that the exponent of $x_s$ in $N_i$ is $q$ for $1\le i\le k$.  As in Discussion~\ref{disc:1}, let $\bT_\lft$ be the set of variables $\{T_m:m\in\Borel(M_\lft)\}$.  By induction on the number of variables, it follows that $\bT^{\mu_\lft}_{\min}\in K[\bT_\lft]_{\mu_\lft}$ is the unique sink of $\vec{\Gamma}_{\mu_\lft,\calB_{\lft}}$.  As described in Discussion~\ref{disc:1}, we may use the map
	\[
	\phi_\lft:K[\bT_\lft]_{\mu_\lft}\to K[\bT]_\mu
	\]
	to identify $\vec{\Gamma}_{\mu_\lft,\calB_{\lft}}$ with a directed subgraph of $\vec{\Gamma}_{\mu,\calB}$ which we denote by $\vec{\Gamma}_\lft$.  Since $T_N$ is a sink of $\vec{\Gamma}_{\mu,\calB}$ by assumption, it must also be a sink of $\vec{\Gamma}_\lft$.  However, $\vec{\Gamma}_\lft$ has a unique sink by induction, namely $\phi_\lft(\bT^{\mu_\lft}_{\min})$, which we observed in Discussion~\ref{disc:1} is the same as $\bT^\mu_{\min}$.  Thus $\bT_N=\phi_\lft(\bT^{\mu_\lft}_{\min})=\bT^\mu_{\min}$, as desired.
	
	If $r>0$, we may assume that the exponent of $x_s$ in $N_i$ is $q$ for $i=1,\ldots,k-r$ and $q+1$ for $i=k-r+1,\ldots,k$.  Let $N'_i=\frac{N_i}{x_s^q}$ for $i=1,\ldots,k-r$ and $N'_i=\frac{N_i}{x_s^{q+1}}$ for $i=k-r+1,\ldots,k$.  Put 
	\[
	\mu^N_\up=\prod_{i=1}^{k-r} N'_i,\qquad \mu^N_\lw=\prod_{i=k-r+1}^k N'_i,
	\]
	and let $M_\up,\mu_\up,M_\lw,\mu_\lw$ be as in Algorithm~\ref{alg:BS}.  Notice that $\mu^N_\up$ and $\mu^N_\lw$ are only divisible by variables whose index is \textit{strictly less} than $s$, so $N'_i\in\Borel(M_\up)$ for $i=1,\ldots,k-r$ and $\mu^N_\up\in \Borel(M_\up^{k-r})$.  Recall that $\mu_\up$ is, by definition, the least monomial under Borel order in $\Borel(M_\up^{k-r})$ which divides $\mu$, so $\mu^N_\up\in\Borel(\mu_\up)$ by Lemma~\ref{lem:BorelMultidegreeDivision}.
	%Suppose $\{N'_1,\ldots,N'_{k-r}\}\neq \bs(\mu^N_\up)$, so $T_{N'_1}\cdots T_{N'_{k-r}}\neq \bT^{\mu^N_{\up}}_{\min}$.  By induction on the number of variables, $T_{N'_1}\cdots T_{N'_{k-r}}$ is not a sink of $\vec{\Gamma}_{\mu^N_{\up},\calB(M_\up)}$.  Since $\vec{\Gamma}_{\mu^N_{\up},\calB(M_\up)}$ can be identified as a directed subgraph of $\vec{\Gamma}_{\mu,\calB}$, under the map $T_{n_1}\cdots T_{n_{k-r}}\to T_{n_1x_s^q}\cdots T_{n_{k-r}x_s^q}T_{N_{k-r+1}}\cdots T_{N_{k}}$ (where $T_{n_1}\cdots T_{n_{k-r}}\in \vec{\Gamma}_{\mu^N_{\up},\calB(M_\up)}$).  Thus $\bT_N$ is not a sink either, contrary to assumption.  It follows that we may assume $\{N'_1,\ldots,N'_{k-r}\}= \bs(\mu^N_\up)$.  
	%Insert refinement here
	If $\mu^N_\up\neq \mu_\up$, 
	%then let $j$ be the largest index so that $\sigma_j(\mu^N_\up)<\sigma_j(\mu_\up)$.  Then $\sigma_j(N'_1)+\cdots+\sigma_j(N'_{k-r})=\sigma_j(\mu^N_\up)<\sigma_j(\mu_\up)\le \sigma_j(M^{k-r}_\up)=(k-r)\sigma_j(M_\up)$.
	 then by Lemma~\ref{lem:BorelFactorizationMultidegreeDivision}, there are indices $u<v$ and an index $1\le i\le k-r$ so that
	 \begin{itemize}
	 	\item $\frac{x_v}{x_u}N'_i\in\Borel(M_\up)$,
	 	\item $\frac{x_v}{x_u}\mu^N_\up$ divides $\mu$, and
	 	\item $\frac{x_v}{x_u}\mu^N_\up\in\Borel(M_\up^{k-r})$.
	 \end{itemize}
	 
%	Thus there is an $N'_i$ with $1\le i\le k-r$ so that  and $\frac{x_v}{x_u}N'_i$ divides $\mu$.  We conclude that $\frac{x_v}{x_u}N_i\in\Borel(M)$ and $\frac{x_v}{x_u}N_i$ divides $\mu$.
	
	Since $x_s^A\mu^N_\up\mu^N_\lw=\mu$, it follows that $x_v$ must divide $\mu^N_\lw$.  Hence there is some $N'_j$, $k-r+1<j\le k$, so that $x_v$ divides $N'_j$.  Putting these together, it follows that $\frac{x_v}{x_u}N_i\in\Borel(M),\frac{x_u}{x_v}N_j\in\Borel(M)$.  Put
	\[
	\bT_{N'}=T_{\frac{x_v}{x_u}N_i}T_{\frac{x_u}{x_v}N_j}\prod_{\ell\neq i,j}T_{N_\ell}.
	\]
	Since the exponent of $x_s$ is $q$ in both $N_i$ and $\frac{x_v}{x_u}N_i$ and $q+1$ in both $N_j$ and $\frac{x_u}{x_v}N_j$, we have 
	\[
	N_i\succ_{\grevlex}\frac{x_v}{x_u}N_i\succ_{\grevlex}\frac{x_u}{x_v}N_j\succ_{\grevlex} N_j.
	\]
	It follows that $\bT_{N}\succ \bT_{N'}$, hence the directed edge $\bT_N\to\bT_{N'}$ appears in $\vec{\Gamma}_{\mu,\calB}$ and $\bT_N$ is not a sink.  Thus we may assume $\mu^N_\up=\mu_\up$ and $\mu^N_\lw=\mu_\lw$.
	
	Now, as in Discussion~\ref{disc:1}, let $\bT_\up$ and $\bT_\lw$ be the sets of variables $\{T_m:m\in\Borel(M_\up)\}$ and $\{T_n:n\in\Borel(M_\lw)\}$, respectively.  By induction on the number of variables, it follows that $\bT^{\mu_\up}_{\min}\in K[\bT_\up]_{\mu_\up}$ (respectively $\bT^{\mu_\lw}_{\min}\in K[\bT_\lw]_{\mu_\lw}$) is the unique sink of $\vec{\Gamma}_{\mu_\up,\calB_{\up}}$ (respectively $\vec{\Gamma}_{\mu_\lw,\calB_{\lw}}$). As described in Discussion~\ref{disc:1}, we may use the map
	\[
	\phi_{\up,\lw}:K[\bT_\up]_{\mu_\up}\times K[\bT_\lw]_{\mu_\lw}\to K[\bT]_\mu
	\]
	to identify $\vec{\Gamma}_{\mu_\up,\calB_{\up}}\times \vec{\Gamma}_{\mu_\lw,\calB_\lw}$ with a directed subgraph of $\vec{\Gamma}_{\mu,\calB}$ which we denote by $\vec{\Gamma}_{\up,\lw}$.  Since $T_N$ is a sink of $\vec{\Gamma}_{\mu,\calB}$ by assumption, it must also be a sink of $\vec{\Gamma}_{\up,\lw}$.  However, $\vec{\Gamma}_{\up,\lw}$ has a unique sink by induction, namely $\phi_{\up,\lw}(\bT^{\mu_\up}_{\min},\bT^{\mu_\lw}_{\min})$, which we saw in Discussion~\ref{disc:1} is the same as $\bT^\mu_{\min}$.  Thus $\bT_N=\phi_{\up,\lw}(\bT^{\mu_\up}_{\min},\bT^{\mu_\lw}_{\min})=\bT^\mu_{\min}$, as desired.
	
	Finally, $\calB(M)$ and the set of quadrics
	\[
	%\{T_mT_n-\bT^{\mu}_{\min}:m,n\in\Borel(M),\mu=mn\}
	\{T_mT_n-T_{\mu_1}T_{\mu_2}:m,n\in\Borel(M),\bs(M,mn)=\{\mu_1,\mu_2\}\}
	\]
	have the same span as a $\K$-vector space.  This follows from the fact that $\vec{\Gamma}_{mn,\calB}$ has the unique sink $\bT^{mn}_{\min}$.
\end{proof}

%\begin{enumerate}
%\item If $k=1$ then $\blg(\mu)=\{\mu\}$.
%\item If $\mu=x_i^{dk}$ then $\blg(\mu)=\{x_i^d,x_i^d,\ldots,x_i^d\}$.
%\item If 
%\end{enumerate}
%\end{procedure}

\begin{corollary}[De Negri~\cite{DN99}]
	If $I$ is a principal Borel ideal, then both the toric ring $\K[I]$ and the Rees algebra $R[It]$ are Koszul, Cohen-Macaulay, and normal.  
\end{corollary}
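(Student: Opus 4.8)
The plan is to reduce both statements to the same input: each of $\K[I]$ and $R[It]$ is presented by a binomial (toric) ideal that has a Gr\"obner basis of quadrics with squarefree leading terms. Three standard facts then finish the job: a quadratic Gr\"obner basis makes the ring Koszul (the criterion recalled in \cite[Section~3.1]{CDR13}); a toric ideal with a squarefree initial ideal defines a normal affine semigroup ring (Sturmfels \cite{S96}); and a normal affine semigroup ring is Cohen--Macaulay (Hochster). Write $I=\langle\Borel(M)\rangle$, $d=\deg M$. For the toric ring there is nothing left to prove: Theorem~\ref{thm:LexOrderReesAlgebraPrincipleBorel} exhibits a Gr\"obner basis of $J_{\K[I]}$ under $\prec_\lex$ consisting of the quadrics $T_mT_n-T_{\mu_1}T_{\mu_2}$ with $\bs(M,mn)=\{\mu_1,\mu_2\}$, and in each nontrivial such binomial the leading term is the squarefree monomial $T_mT_n$ (the sink $\bT^{mn}_{\min}=T_{\mu_1}T_{\mu_2}$ is never a leading term). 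Hence the initial ideal of $J_{\K[I]}$ under $\prec_\lex$ is squarefree and generated in degree $2$; since all generators of $\K[I]=\K[\Borel(M)]$ lie in the common degree $d$ its monoid is pointed, so the three facts apply.

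For the Rees algebra, present $R[It]$ as a toric ring via the map $\phi\colon\K[\x,\bT]\to\K[\x,t]$ of Section~\ref{ss:multirees}, with $\phi(x_i)=x_i$ and $\phi(T_m)=mt$ for $m\in\Borel(M)$ (writing $T_m$ for the variable corresponding to $mt$). A monomial in the image of $\phi$ has the shape $\x^{\balpha}t^k$, and a preimage amounts to a factorization $\x^{\balpha}=w\cdot\mu_1\cdots\mu_k$ with $w$ an arbitrary monomial and each $\mu_i\in\Borel(M)$; by Lemma~\ref{lem:BorelFactorization} this is the same datum as a divisor $\nu=\mu_1\cdots\mu_k\in\Borel(M^k)$ of $\x^{\balpha}$ together with a factorization of $\nu$. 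By Lemma~\ref{lem:BorelMultidegreeDivision} the divisors of $\x^{\balpha}$ lying in $\Borel(M^k)$ have a unique Borel-minimal element $P$, and every such divisor lies in $\Borel(P)$; so there is a distinguished preimage $(\x^{\balpha}/P)\cdot\bT^P_{\min}$, where $\bT^P_{\min}$ is built from $\bs(M,P)$ as in Definition~\ref{def:LexSink}. Fix the lexicographic order on $\K[\x,\bT]$ in which every $T_m$ precedes every $x_i$ and the $T_m$ are ordered by $\prec_{\grevlex}$ on the underlying monomials; one checks this distinguished preimage is the lexicographically least element of its fiber.

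Let $\calB$ consist of $\calB(M)$ together with the linear syzygies $\{x_iT_m-x_jT_n : i<j,\ x_im=x_jn,\ m,n\in\Borel(M)\}$; every element of $\calB$ has a squarefree quadratic leading term. The claim is that $\vec{\Gamma}_{\x^{\balpha}t^k,\calB}$ has $(\x^{\balpha}/P)\cdot\bT^P_{\min}$ as its unique sink, so that Proposition~\ref{prop:Graph} yields a quadratic squarefree Gr\"obner basis for $J_{R[It]}$. If a preimage has $\nu\ne P$, apply Lemma~\ref{lem:BorelFactorizationMultidegreeDivision}: it produces indices $i<j$, an index $\ell$, and a reverse Borel move $\mu_\ell\mapsto\frac{x_j}{x_i}\mu_\ell\in\Borel(M)$ with $\frac{x_j}{x_i}\nu\in\Borel(M^k)$ still dividing $\x^{\balpha}$ -- the divisibility forces an $x_j$ into $w$, so the linear syzygy $x_iT_{\frac{x_j}{x_i}\mu_\ell}-x_jT_{\mu_\ell}$ gives a legal directed edge lowering the $T$-part, which dominates the order. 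Repeating, $\nu$ decreases in the finite poset of divisors of $\x^{\balpha}$ in $\Borel(M^k)$ until $\nu=P$; once $\nu=P$, the moves of $\calB(M)$ fix $\nu$ and, by Theorem~\ref{thm:LexOrderReesAlgebraPrincipleBorel} applied fiberwise, drive the factorization of $P$ to $\bs(M,P)$. No move leaves $(\x^{\balpha}/P)\cdot\bT^P_{\min}$: a $\calB(M)$-move would contradict Theorem~\ref{thm:LexOrderReesAlgebraPrincipleBorel}, and a linear-syzygy move would produce a Borel-smaller divisor of $\x^{\balpha}$ in $\Borel(M^k)$, contradicting minimality of $P$. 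So it is the unique sink, and the three facts give that $R[It]$ is Koszul, normal, and Cohen--Macaulay.

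The step I expect to be the main obstacle is the fiberwise analysis for $R[It]$, and within it the bookkeeping of the monomial order: one must choose a lexicographic order on $\K[\x,\bT]$ for which \emph{both} the toric quadrics $\calB(M)$ and the new linear syzygies are decreasing, and for which the Borel-minimal divisor $P$ supplied by Lemma~\ref{lem:BorelFactorizationMultidegreeDivision} is exactly the $T$-part of the lexicographically least preimage -- this is what forces the $T_m$ to be more significant than the $x_i$. Granting the right order, the rest is reuse of Lemmas~\ref{lem:BorelMultidegreeDivision} and~\ref{lem:BorelFactorizationMultidegreeDivision} and Theorem~\ref{thm:LexOrderReesAlgebraPrincipleBorel}. (Alternatively, $R[It]$ is the $r=1$ instance of Theorem~\ref{thm:MultiSink}, since a single ideal is vacuously $\mathsf{L}$-free.)
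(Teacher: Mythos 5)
Your argument for $\K[I]$ coincides with the paper's: invoke Theorem~\ref{thm:LexOrderReesAlgebraPrincipleBorel} for a quadratic squarefree Gr\"obner basis, then cite the standard implications (quadratic GB $\Rightarrow$ Koszul; squarefree initial ideal $\Rightarrow$ normal by Sturmfels; Hochster $\Rightarrow$ Cohen--Macaulay). Your check that the nontrivial quadrics $T_mT_n - T_{\mu_1}T_{\mu_2}$ all have $T_mT_n$ as their lead term and that $T_mT_n$ is squarefree is also correct, using that $\bs(M,m^2) = \{m,m\}$ so the $m=n$ case is trivial.

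For the Rees algebra $R[It]$, however, you take a genuinely different route from the paper. The paper dispatches this part with a one-line citation to Herzog--Hibi--Vladoiu \cite[Theorem~5.1]{HHV05}, which transports Koszul, normal, and Cohen--Macaulay from the fiber ring $\K[I]$ to $R[It]$ for polymatroidal ideals (of which principal Borel ideals are a special case). You instead prove directly that $J_{R[It]}$ itself has a quadratic squarefree Gr\"obner basis, by the same fiber-graph technique used for $\K[I]$: you identify the lex-least preimage $(\x^{\balpha}/P)\cdot\bT^P_{\min}$ over a fiber $\x^{\balpha}t^k$ via the Borel-minimal divisor $P$ of $\x^{\balpha}$ in $\Borel(M^k)$ (Lemma~\ref{lem:BorelMultidegreeDivision}), and then use Lemma~\ref{lem:BorelFactorizationMultidegreeDivision} to show any vertex with $\nu\neq P$ has an outgoing linear-syzygy edge, use Theorem~\ref{thm:LexOrderReesAlgebraPrincipleBorel} to show any vertex with $\nu=P$ but the wrong factorization has an outgoing $\calB(M)$-edge, and use minimality of $P$ plus Theorem~\ref{thm:LexOrderReesAlgebraPrincipleBorel} again to show the distinguished vertex has no outgoing edges. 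The bookkeeping is sound: the key observation that the lead term of a linear syzygy $x_jT_n - x_iT_m$ ($i<j$, $m=\frac{x_j}{x_i}n$) is $x_jT_n$ (since $T_n\succ T_m$ under the grevlex-induced order) and that dividing by it and multiplying by $x_iT_m$ strictly drops $\nu$ in Borel order, is exactly right. As you note at the end, this is essentially the $r=1$ case of Theorem~\ref{thm:MultiSink}; the reason the paper does not argue this way at this point is simply exposition order --- the corollary appears in Section 4, before Theorem~\ref{thm:MultiSink} is available --- so the paper falls back on the external citation. Your version is self-contained (and in fact proves the slightly stronger statement that $J_{R[It]}$ has a quadratic squarefree initial ideal, not merely that the three ring-theoretic properties hold), at the cost of roughly a paragraph of extra work.
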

\begin{proof}
	The Koszul property follows because $J_{\K[I]}$ has a Gr\"obner basis of quadrics by Theorem~\ref{thm:LexOrderReesAlgebraPrincipleBorel}.  The lead terms of this Gr\"obner basis are squarefree, hence it follows from~\cite[Proposition~13.15]{S96} that $K[I]$ is normal.  Hochster's well-known result then implies that $K[I]$ is Cohen-Macaulay.  These properties pass to the Rees algebra $R[It]$ by~\cite[Theorem~5.1]{HHV05}.
\end{proof}

%Since we only consider the Rees algebra of a single ideal in this section, we make use of~\cite[Theorem~5.1]{HHV05}.  This allows us to lift a Gr\"obner basis of the \textit{fiber ring} of an ideal to a Gr\"obner basis of its Rees algebra.  If $I$ is an ideal generated in a single degree then the fiber ring of $I$ coincides with the toric ring of $I$.  

\section{Principal $L$-Borel ideals and chordal bipartite incidence graphs}\label{sec:LBorel}
In this section we define a class of ideals containing the principal Borel ideals, which we call principal $L$-Borel ideals.  We also define an incidence condition for a collection of principal $L$-Borel ideals which is crucial for the Koszul property of the associated multi-Rees algebra, as we will see in Section~\ref{sec:MultiRees}.
%Recall that a partially ordered set (poset) on $\x$ is a relation on $\x$ which is transitive, reflexive, and antisymmetric.  If $\mathcal{P}$ is a poset we write $\le_\mathcal{P}$ to denote 

\begin{definition}\label{def:LinearPoset}
	Let $U$ be a non-empty subset of $\{x_1,\ldots,x_n\}$.  Write $L_U$ for the partially ordered set (poset) on $\{x_1,\ldots,x_n\}$ defined by $x_j<_{L_U} x_i$ whenever $x_i,x_j\in U$, $i<j$, and $x_k$ is incomparable to every other variable if $x_k\notin U$.  We call a poset $L$ on $\{x_1,\ldots,x_n\}$ a \textit{linear} poset if $L=L_U$ for some $U\subset \{x_1,\ldots,x_n\}$, and we call $U$ the \textit{support} of $L_U$.
\end{definition}

Clearly a non-empty subset $U\subset\{x_1,\ldots,x_n\}$ determines a unique linear poset $L$ with support $U$.  The opposite is also true unless $L$ is the anti-chain (i.e. $L$ does not compare any variables): if $U$ is any subset consisting of a single variable then the linear poset on $U$ is the anti-chain.

\begin{definition}\label{def:LBorel}
	Suppose $L=L_U$ is a linear poset on $\x=\{x_1,\ldots,x_n\}$ and $m$ is a monomial in $S=\K[x_1,\ldots,x_n]$.  An $L$-\textit{Borel} move (respectively, reverse $L$-Borel move) on $m$ is a monomial of the form $\frac{x_i}{x_j}m$ (respectively, $\frac{x_j}{x_i}m$), where $x_j<_L x_i$.  If a set of monomials is closed under $L$-Borel moves we call it an $L$-\textit{Borel set}.  We call the set of all monomials which can be obtained from a fixed monomial $m$ by $L$-Borel moves a principal $L$-Borel set with principal $L$-Borel generator $m$; we denote such a set by $L$-$\Borel(m)$.  We call a monomial ideal generated by an $L$-Borel set (respectively, principal $L$-Borel set) an $L$-Borel ideal (respectively, principal $L$-Borel ideal).
\end{definition}

\begin{definition}\label{def:EssentialVariables}
	Suppose $I$ is an $L$-Borel ideal.  We call a variable $x_i\in K[\x]$ an \textit{essential variable} of $I$ if
	\begin{enumerate}
		\item $x_i$ divides at least one monomial in $\gens(I)$
		\item $x_i$ does not appear in every monomial of $\gens(I)$ with the same exponent.
	\end{enumerate}
\end{definition}

\begin{observation}\label{obs:EssentialVariables}
	If $I$ is $L$-Borel with essential variables $E$, and $L_E$ is the linear poset on $E$, then $I$ is also an $L_E$-Borel ideal.  Moreover, $E$ is the smallest possible support of a linear poset $L$ with respect to which $I$ is $L$-Borel.
\end{observation}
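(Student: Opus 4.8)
The plan is to reduce both assertions to the single claim that \emph{if $I$ is $L$-Borel for a linear poset $L$ with support $U$, then $E\subseteq U$}. Granting this, the first assertion follows at once: $E\subseteq U$ forces $L_E$ to be a sub-order of $L$ (if $x_j<_{L_E}x_i$ then $x_i,x_j\in E\subseteq U$ and $i<j$, so $x_j<_L x_i$), hence every $L_E$-Borel move is an $L$-Borel move; and since $I$ is generated by some $L$-Borel set $S$, the set of all monomials of $I$ is itself closed under $L$-Borel moves — an $L$-Borel move on $s\cdot w$ with $s\in S$ either rewrites a variable of $w$, leaving a multiple of $s$, or rewrites a variable of $s$, landing on $s'\cdot w$ with $s'\in S$ — so that set of monomials is also closed under $L_E$-Borel moves and generates $I$, whence $I$ is $L_E$-Borel. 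For the second assertion, the first one realizes $E$ as the support of a linear poset for which $I$ is $L$-Borel, while the claim says $E$ lies in the support of every such linear poset, so $E$ is the smallest. (If $E=\emptyset$ then all the minimal generators have the same exponent vector, i.e.\ $\gens(I)$ is a single monomial, and there is nothing to prove.)

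To prove the claim I would argue by contradiction. Assume $x_i\in E$ but $x_i\notin U$. Because $x_i$ is incomparable in $L=L_U$ to every variable, no $L$-Borel move or reverse $L$-Borel move alters the exponent of $x_i$ on any monomial, so $\deg_{x_i}$ is constant along every sequence of $L$-Borel moves among the monomials of $I$. It then remains to deduce that $\deg_{x_i}$ is constant across $\gens(I)$, which contradicts condition (2) in Definition~\ref{def:EssentialVariables}. When $I$ is \emph{principal} $L$-Borel this is immediate: there $\gens(I)=L$-$\Borel(m)$ is a single $L$-Borel orbit, and $\deg_{x_i}$ — being unchanged by every $L$-Borel move — is constant on it, so $x_i$ fails condition (2) and cannot be essential. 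In general, one factors each monomial as its $U$-part (a monomial in the variables of $U$) times its $\overline U$-part, observes that $L$-Borel moves act only on $U$-parts, where they are ordinary Borel moves in the subring $\K[U]$ so that Lemma~\ref{lem:BorelMembership} applies, and combines this with the minimality of the generators to rule out two minimal generators coming from ``orbits'' with different $x_i$-exponents (if they had, one would divide a suitable Borel move of the other, contradicting minimality).

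The step I expect to be the main obstacle is precisely this transfer from a single $L$-Borel orbit to the full set of minimal generators: an $L$-Borel set generating $I$ can be a union of several $L$-Borel orbits, and $\gens(I)$ itself need not be $L$-Borel-closed, so ``$\deg_{x_i}$ constant on an orbit'' does not by itself finish the argument. Making this precise requires using the $U$/$\overline U$ factorization, the cumulative-exponent criterion of Lemma~\ref{lem:BorelMembership}, and minimality of the generators in a coordinated way. In the applications of this observation in the paper the relevant ideals are principal $L$-Borel, where, as noted above, this step is trivial.
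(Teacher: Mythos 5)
Your reduction to the claim ``if $I$ is $L$-Borel for a linear poset with support $U$, then $E\subseteq U$'' is clean, and the deductions from that claim are correct. But the claim itself, and in fact the Observation as literally stated for arbitrary $L$-Borel ideals, is \emph{false}, so the ``main obstacle'' you flag at the end is not a gap to be closed by combining Lemma~\ref{lem:BorelMembership} with minimality of generators --- it is genuinely unbridgeable. A counterexample: in $\K[x_1,x_2,x_3]$, take $U=\{x_1,x_2\}$ and $S=\{x_1^2,\,x_1x_2,\,x_1x_3,\,x_2^2\}$. The only $L_U$-Borel move available is $x_2\mapsto x_1$, so $S$ is $L_U$-Borel closed, $I=\langle S\rangle$ is $L_U$-Borel, and since every element of $S$ has degree $2$, $\gens(I)=S$. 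Each of $x_1,x_2,x_3$ appears with non-constant exponent across $\gens(I)$, so $E=\{x_1,x_2,x_3\}\not\subseteq U$; since $I$ is $L_U$-Borel with $U\subsetneq E$, the ``Moreover'' clause fails. Your proposed patch --- ``one would divide a suitable Borel move of the other, contradicting minimality'' --- breaks exactly here: $x_1^2$ and $x_1x_3$ differ in $x_3$-exponent, but neither admits any $L_U$-Borel move and neither divides the other. Even the first assertion can fail for non-principal $L$-Borel ideals: $I=\langle x_1^2,x_1x_2,x_2^2,x_3^2\rangle$ is $L_{\{x_1,x_2\}}$-Borel with $E=\{x_1,x_2,x_3\}$, yet any $L_E$-Borel set generating $I$ must contain $x_3^2$ and therefore its $L_E$-Borel move $x_2x_3$, which is not in $I$.

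Where you are right is in your closing parenthetical: for \emph{principal} $L$-Borel ideals the Observation holds, and the argument you sketch proves it --- $\gens(I)=L$-$\Borel(M)$ is a single orbit, so for $x_i\notin U$ the $x_i$-exponent is constant on $\gens(I)$, giving $E\subseteq U$, after which your reduction does the rest. The paper states the Observation without proof and, as you note, only invokes it for principal $L$-Borel ideals (Remark~\ref{rem:LBorelExt}, Proposition~\ref{prop:multireduction}, Theorem~\ref{thm:MultiSink}), so the principal-case reading is the right one. Rather than attempting to push the argument through the $U$-/$\overline U$-factorization for an arbitrary $L$-Borel set of generators, you should explicitly restrict to the principal case: the non-principal statement is simply not true.
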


The essential variables reflect which variables actually take part in $L$-Borel moves on generators of $I$.  We illustrate this with two examples.

\begin{example}
	Suppose $U=\{x_k,x_{k+1},\ldots,x_n\}$ for some fixed integer $1\le k\le n$ and let $L=L_U$.  Then $I=L$-$\Borel(x_k^a)=\langle x_k^a\rangle$ for any $a\in\mathbb{Z}_{>0}$.  According to Definition~\ref{def:EssentialVariables}, the set of essential variables of $I$ is empty.  This is appropriate because there are no $L$-Borel moves which occur among the generators of $I$.  Thus the antichain $L_{\emptyset}$ is the poset of minimal support with respect to which $I$ is $L$-Borel.
\end{example}

\begin{example}
	Consider the ideal $I=\langle x_1^3x_4,x_1^2x_2x_4,x_1x_2^2x_4\rangle\subset \K[x_1,x_2,x_3,x_4]$.  Let $U_1=\{x_1,x_2\},$ $U_2=\{x_1,x_2,x_3\}$, and put $L_1=L_{U_1}$, $L_2=L_{U_2}$.  Then $I=\langle L_1$-$\Borel(x_1x_2^2x_4)\rangle$ and $I=\langle L_2$-$\Borel(x_1x_2^2x_4)\rangle$.  We see that the essential variables of $I$ are $\{x_1,x_2\}=U_1$.
	
	Now consider the ideal $I'=\langle x_1^3,x_1^2x_2,x_1x_2^2\rangle\subset \K[x_1,x_2,x_3,x_4]$.  Let $U_1$ and $U_2$ be as above and set $U_3=\{x_1,x_2,x_3,x_4\}$ and $L_3=L_{U_3}$.  Then $I'=\langle L_1$-$\Borel(x_1x_2^2)\rangle=\langle L_2$-$\Borel(x_1x_2^2)\rangle=\langle L_3$-$\Borel(x_1x_2^2)\rangle =\langle \Borel(x_1x_2^2)\rangle$.  The essential variables of $I'$ are again $U_1=\{x_1,x_2\}$.
\end{example}

\begin{remark}\label{rem:LBorelExt}
	Let $U\subset \{x_1,\ldots,x_n\}$, $L=L_U$, and $m\in\K[\x]$ be a monomial.  Suppose $m=m_1m_2$ where $m_1$ is a monomial in variables contained in $U$ and $m_2$ is a monomial in variables not contained in $U$.  If we consider $m_1$ as a monomial in $\K[E]$, where $E$ is the essential variables of $\langle L$-$\Borel(m)\rangle$, then $L$-$\Borel(m)=\{m_2n:n\in\Borel(m_1)\}$.  Thus $L$-Borel ideals in $\K[\x]$ are extensions of Borel ideals from a polynomial subring, possibly multiplied by an additional monomial.  We will see in Proposition~\ref{prop:multireduction} that, as far as the defining equations of Rees and multi-Rees algebras are concerned, we may assume every principal $L$-Borel ideal is simply an extension of a principal Borel ideal from $\K[E]$.
	%There is a bijective correspondence between $L$-Borel ideals in $\K[\x]$ and 
	%An $L$-Borel ideal clearly can also be considered as the extension of a Borel ideal from the polynomial subring $\K[E]$ on the set $E$ of essential variables to $\K[\x]$.
\end{remark}

\begin{remark}\label{rem:QBorel}
	We chose our notation of $L$-Borel ideals to be consistent with~\cite{FMS13}, where the more general class of $Q$-Borel ideals is introduced.  These are ideals fixed under Borel moves drawn from an arbitrary poset $Q$ on $\x$.
\end{remark}

The Koszul property for principal $L$-Borel ideals can be subtle, as the following example from~\cite{BC17} shows.

\begin{example}\label{ex:NonKoszul}
	Let $\calI=\{I_1,I_2,I_3\}$ where $I_1=\langle x_1,x_2\rangle, I_2=\langle x_1,x_3\rangle$, and $I_3=\langle x_2,x_3\rangle$.  These are principal $L$-Borel ideals on the linear posets over $E_1=\{ x_1,x_2\},$ $E_2=\{x_1,x_3\},$ and $E_3=\{x_2,x_3\}$.  The multi-fiber ring $\K[\calI\bt]$ is the subring
	\[
	\K[x_1t_1,x_2t_1,x_1t_2,x_3t_2,x_2t_3,x_3t_3]\subset \K[\x,\bt].
	\]
	It is straightforward to see that $T_{x_1t_1}T_{x_3t_2}T_{x_2t_3}-T_{x_2t_1}T_{x_1t_2}T_{x_3t_3}$ is a minimal generator of the defining ideal both of the multi-fiber ring $\K[\calI\bt]$ and the multi-Rees algebra $R[\calI\bt]$.  Thus neither the multi-fiber ring $\K[\calI\bt]$ or the multi-Rees algebra $R[\calI\bt]$ is Koszul.
\end{example}

The fiber ring in Example~\ref{ex:NonKoszul} is in fact the toric edge ring of the bipartite graph on $\{x_1,x_2,x_3\}\sqcup\{t_1,t_2,t_3\}$ with edges $\{x_1,t_1\}$, $\{x_2,t_1\}$, $\{x_1,t_2\}$, $\{x_3,t_2\}$, $\{x_2,t_3\}$, and $\{x_3,t_3\}$.  By~\cite{OH99} the edge ring of a bipartite graph is Koszul if and only if the graph is \textit{chordal bipartite}; that is, every cycle of length greater than four has a chord.  Clearly the bipartite graph associated with Example~\ref{ex:NonKoszul} is a six-cycle without a chord, so the multi-fiber ring $\K[\calI\bt]$ cannot be Koszul.  In fact, one way to interpret the main result of~\cite{OH99} is as a classification of when the fiber ring of the multi-Rees algebra $R[\calI\bt]$ is Koszul, in the case when each ideal in $\calI$ is generated by a subset of the variables of $\K[\x]$ (see Theorem~\ref{thm:OH99}).  We consider how we can put a similar condition on a collection $\calI$ of principal $L$-Borel ideals to recover the Koszul property of $R[\calI\bt]$.

%Our motivation to consider such families of ideals arises from the result of Ohsugi and Hibi~\cite{OH99} that the toric ring of a bipartite graph is Koszul if and only if the graph is chordal bipartite.  

\begin{definition}
	Let $G$ be a bipartite graph with vertex set $V=\x\sqcup \bt$ and edges $E\subset \x\times \bt$.  For a fixed ordering $\{x_1,\ldots,x_n\}$ of $\x$ and $\{t_1,\ldots,t_r\}$ of $\bt$, the bi-adjacency matrix of $G$ is the matrix with rows indexed by $\x$, columns indexed by $\bt$, with a $1$ in position $(x_i,t_j)$ if $\{x_i,t_j\}\in E(G)$ and a $0$ otherwise.
	
	An $\mathsf{L}$-free ordering of $V$ is an ordering $\x=\{x_1,\ldots,x_n\}$ and $\bt=\{t_1,\ldots,t_r\}$ satisfying that if $1\le h<j\le n$, $1\le u<v\le r$, and $\{x_h,t_u\},\{x_j,t_u\},\{x_j,t_v\}\in E$, then $\{x_j,t_u\}\in E$.  Equivalently, the bi-adjacency matrix of $G$ has no submatrix of the form $\begin{bmatrix} 1 & 0\\ 1 & 1\end{bmatrix}$.  (This explains the terminology $\mathsf{L}$-free.)
\end{definition}

%Save for later?
\begin{definition}
	Let $G$ be a bipartite graph with vertex set $V=\x\sqcup \bt$ and edges $E\subset \x\times \bt$.  $G$ is a \textit{chordal bipartite} graph if every cycle of length at least six has a chord.  A \textit{chord} of a cycle is an edge of $G$ connecting non-adjacent vertices of the cycle.
\end{definition}

\begin{theorem}\cite{HKS85}\label{thm:ChordalBipartite}
	A bipartite graph $G$ is chordal bipartite if and only if its vertex set has an $\mathsf{L}$-free ordering.
\end{theorem}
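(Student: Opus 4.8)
For this equivalence I would handle the two implications separately; the direction from an $\mathsf{L}$-free ordering to chordality is short and self-contained, while the converse is the substantial one.

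\textbf{An $\mathsf{L}$-free ordering forces $G$ chordal bipartite.} Fix an $\mathsf{L}$-free ordering of $V=\x\sqcup\bt$ and suppose, for contradiction, that $C$ is a chordless cycle of length $2k$ with $k\ge 3$; its vertices alternate between $\x$ and $\bt$. Let $x_j$ be the vertex of $C$ in $\x$ of largest index, and let its two $C$-neighbors be $t_u,t_v$ with $u<v$. Let $x_h$ be the $C$-neighbor of $t_u$ other than $x_j$; since $k\ge 3$ the two $C$-neighbors of $t_u$ are distinct vertices of $\x$, so $x_h\ne x_j$, and maximality of $j$ gives $h<j$. Then $\{x_h,t_u\},\{x_j,t_u\},\{x_j,t_v\}\in E$, so the $\mathsf{L}$-free condition yields $\{x_h,t_v\}\in E$. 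If $x_h$ and $t_v$ were adjacent on $C$ then $x_h,t_u,x_j,t_v$ would form a $4$-cycle inside $C$, forcing $2k=4$; hence $x_h$ and $t_v$ are non-adjacent on $C$ and $\{x_h,t_v\}$ is a chord, a contradiction. So $G$ has no chordless cycle of length $\ge 6$.

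\textbf{$G$ chordal bipartite yields an $\mathsf{L}$-free ordering.} Here I would use the classical route through doubly lexical orderings of $0$--$1$ matrices (as in~\cite{HKS85}). First recall the standard equivalence: the bi-adjacency matrix $M$ of $G$ is \emph{totally balanced} if and only if $M$ has no square submatrix equal to the bi-adjacency matrix of an induced cycle of length $\ge 6$, i.e.\ if and only if $G$ is chordal bipartite. The plan then rests on two facts: (i) every $0$--$1$ matrix admits a doubly lexical ordering of its rows and columns, produced by a greedy refinement procedure (Lubiw); and (ii) if $M$ is totally balanced then in any doubly lexical ordering $M$ contains no $2\times 2$ submatrix equal, in the induced row and column order, to the ``$\Gamma$'' pattern $\bigl[\begin{smallmatrix}1&1\\1&0\end{smallmatrix}\bigr]$. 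Reversing the order of $\x$ turns $\Gamma$ into $\mathsf{L}$, so (ii) together with (i) and the total-balancedness criterion yields precisely an $\mathsf{L}$-free ordering of $V$.

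The main obstacle is fact (ii): ruling out the bad pattern in a doubly lexical ordering of a totally balanced matrix. I would argue by contradiction: starting from a $\Gamma$ at rows $r_1<r_2$ and columns $c_1<c_2$ (so $M[r_1,c_1]=M[r_1,c_2]=M[r_2,c_1]=1$ and $M[r_2,c_2]=0$), apply the row- and column-lexicality conditions alternately to ``grow'' the configuration — each use of double lexicality produces a fresh row or column extending an alternating pattern of $1$'s and $0$'s — until the pattern closes up into the bi-adjacency matrix of a chordless cycle of length $\ge 6$, contradicting total balancedness. The bookkeeping in this growth step — tracking which entries are forced to be $0$ versus $1$, and verifying that the extracted cycle is genuinely induced and of length $\ge 6$ — is the technical heart of the argument. (Alternatively one could attempt a direct induction on $|V|$: show that a chordal bipartite graph with an edge has a vertex $v$ whose neighbors' neighborhoods form a chain under inclusion, place $v$ first within its part, and check that extending an inductively built $\mathsf{L}$-free ordering of $G-v$ by $v$ preserves $\mathsf{L}$-freeness; the difficulty there reduces to essentially the same combinatorics.)
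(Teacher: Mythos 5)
The paper gives no proof of this theorem; it is stated with a citation to~\cite{HKS85} and used as a black box, so there is no internal argument to compare against.

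Your forward direction (an $\mathsf{L}$-free ordering forces $G$ chordal bipartite) is correct and complete. Locating the vertex $x_j$ of largest index on a chordless cycle $C$ of length $2k\ge 6$, taking its two $C$-neighbors $t_u,t_v$ with $u<v$ and the other $C$-neighbor $x_h$ of $t_u$ (so $h<j$ by maximality), applying the $\mathsf{L}$-free condition to produce the edge $\{x_h,t_v\}$, and then observing this must be a chord because a simple cycle of length $\ge 6$ cannot contain a $4$-cycle as a subgraph --- all of this is sound. One small remark worth flagging: the displayed definition in the paper contains a typo; the stated conclusion ``then $\{x_j,t_u\}\in E$'' already appears among the hypotheses, and the intended conclusion (which you correctly use, and which is what the bi-adjacency matrix characterization actually says) is $\{x_h,t_v\}\in E$.

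The converse is the substantive half, and it is not proved. Your plan hinges on two facts: (i) every $0$--$1$ matrix has a doubly lexical ordering, and (ii) in a doubly lexical ordering of a totally balanced matrix no $\Gamma$-pattern $\bigl[\begin{smallmatrix}1&1\\1&0\end{smallmatrix}\bigr]$ occurs. Fact (i) is standard, but fact (ii) is precisely the non-trivial content of~\cite{HKS85} and you explicitly defer it, calling the ``grow a $\Gamma$ into an induced cycle of length $\ge 6$'' step ``the technical heart of the argument'' without carrying it out; the alternative induction you mention (find a vertex whose neighbors' neighborhoods are nested, prepend it, recurse) is likewise left at the level of a remark that it ``reduces to essentially the same combinatorics.'' As a proof proposal this is a genuine gap: the one lemma you need is exactly the one you have not established, and neither route you outline closes it. You have, however, correctly identified the right reduction and the right literature, which is all the paper itself does.
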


\begin{remark}
	In~\cite{HKS85} and also in~\cite{OH99} the authors consider an ordering for $\x$ and $\bt$ so that the bi-adjacency matrix has no submatrix of the form $\mathsf{\Gamma}=\begin{bmatrix} 1 & 1\\ 1 & 0\end{bmatrix}$.  An $\mathsf{L}$-free ordering can easily be turned into an ordering which avoids submatrices of type $\mathsf{\Gamma}$ by reversing the order on $\x$ (since we have stipulated that the columns are labeled by $\x$).  We choose the convention of an $\mathsf{L}$-free ordering instead of a $\mathsf{\Gamma}$-free ordering because it matches best with both the Borel order on the $\x$ variables and (one of) the conventions for Ferrers diagrams -- see Example~\ref{ex:FerrersDiagrams}.
\end{remark}

%\begin{definition}
%Suppose $I\subset K[x_1,\ldots,x_n]$ is a monomial ideal.  If there is an ordered subset $\mathcal{S}\subset\x$ so that $\gens(I)=\Borel(M)$ in the subring $K[\mathcal{S}]$, we call $I$ a subring principal Borel ideal.  Equivalently, $\gens(I)$ is closed under borel moves of the form $m\to \frac{x_i}{x_j}m$ where $x_j$ divides $m$, $i<j$, and $x_i,x_j\in \mathcal{S}$.  If $I$ is a subring principal Borel ideal, let $E$ be the minimal subset of $\x$ so that $\gens(I)\subset K[E]$.  We call $E$ the \textit{essential variables} of $I$.  Equivalently, $\langle E\rangle$ is the unique maximal associated prime of $I$.
%\end{definition}

\begin{definition}
	Suppose that $\calI=\{I_1,\ldots,I_r\}$ is a collection of principal $L$-Borel ideals in $\K[\x]$.  Let $E_i$ be the essential variables of $I_i$ for $i=1,\ldots,r$.  We define the \textit{essential variables incidence graph of $\calI$}, denoted $G(\calI)$ as the bipartite graph on the vertex set $\x\sqcup\bt$ with edges $\{x_i,t_j\}$ if $x_i\in E_j$.  
	
	We say an ordering of the ideals $\calI$ is $\mathsf{L}$-free if it yields an $\mathsf{L}$-free ordering of the vertices of the essential variables incidence graph of $\calI$ (we must fix the ordering of the variables $\x$ since this ordering determines what moves are considered Borel moves).
\end{definition}

\begin{example}\label{ex:LBorelFamily}
	Consider the family $\calI=\{I_1,I_2,I_3,I_4,I_5\}$ of principal $L$-Borel ideals in $\K[x_1,x_2,x_3,x_4]$:
	\[
	\begin{array}{rl}
	I_1= & \langle x_4\rangle\\
	I_2= & \langle x_3^2,x_3x_4\rangle\\
	I_3= & \langle x_2^2,x_2x_3,x_2x_4,x_3^2,x_3x_4\rangle\\
	I_4= & \langle x_1^3,x_1^2x_2,x_1^2x_3,x_1x_2^2,x_1x_2x_3\rangle\\
	I_5= & \langle x_1^3,x_1^2x_2,x_1x_2^2\rangle
	\end{array}
	\]
	The ideals $I_i$ are $L_i$-Borel (for $1\le i\le 5$) with respect to the linear posets $L_{E_i}$, where $E_1=\{x_4\}, E_2=\{x_3,x_4\}, E_3=\{x_2,x_3,x_4\}, E_4=\{x_1,x_2,x_3\},$ and $E_5=\{x_1,x_2\}$, respectively.  The sets $E_1,E_2,E_3,E_4,$ and $E_5$ are the essential variables of $I_1,I_2,I_3,I_4,$ and $I_5$, respectively.  The principal $L_i$-Borel generators are, respectively, $x_4$, $x_3x_4$, $x_3x_4$, $x_1x_2x_3$, and $x_1x_2^2$.  The essential variables incidence graph $G(\calI)$ is shown in Figure~\ref{fig:essentialvariablesincidencegraph}.  The bi-adjacency matrix of $G(\calI)$ is
	\[
	\kbordermatrix{ & t_1 & t_2 & t_3 & t_4 & t_5 \\
		x_1 & 0 & 0 & 0 & 1 & 1 \\ 
		x_2 & 0 & 0 & 1 & 1 & 1 \\
		x_3 & 0 & 1 & 1 & 1 & 0 \\
		x_4 & 1 & 1 & 1 & 0 & 0
	}.
	\]
	Since the bi-adjacency matrix is $\mathsf{L}$-free, we see that $G(\calI)$ is chordal bipartite by Theorem~\ref{thm:ChordalBipartite}.
	
	\begin{figure}
		\centering
		\begin{tikzpicture}
		\node (x1) at (0,4) {$x_1$};
		\node (x2) at (0,3) {$x_2$};
		\node (x3) at (0,2) {$x_3$};
		\node (x4) at (0,1) {$x_4$};
		
		\node (t1) at (2,4.5) {$t_1$};
		\node (t2) at (2,3.5) {$t_2$};
		\node (t3) at (2,2.5) {$t_3$};
		\node (t4) at (2,1.5) {$t_4$};
		\node (t5) at (2,.5) {$t_5$};
		
		\draw (t1)--(x4) (t2)--(x3) (t2)--(x4) (t3)--(x2) (t3)--(x3) (t3)--(x4) (t4)--(x1) (t4)--(x2) (t4)--(x3) (t5)--(x1) (t5)--(x2);
		\end{tikzpicture}
		\caption{The essential variables incidence graph for Example~\ref{ex:LBorelFamily}}
		\label{fig:essentialvariablesincidencegraph}
	\end{figure}
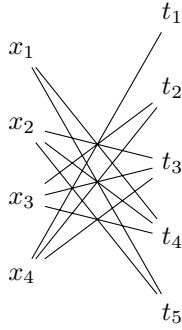

\end{example}

\begin{remark}
	Notice that $I_1,I_2,$ and $I_3$ in Example~\ref{ex:LBorelFamily} are not principal $L$-Borel with respect to any poset $L$ with support larger than $E_1,E_2,$ and $E_3$, respectively.  In contrast, $I_4$ and $I_5$ in Example~\ref{ex:LBorelFamily} are principal Borel ideals: that is, they are principal $L$-Borel with respect to the full linear poset $L$ on the set $\{x_1,x_2,x_3,x_4\}$, namely $x_4<_L x_3<_L x_2<_L x_1$.  The essential variables of $I_4$ and $I_5$ are nevertheless proper subsets of the full set of variables.
\end{remark}

\begin{example}[Producing $\mathsf{L}$-free orderings]\label{ex:FerrersDiagrams}
	Suppose $\mathcal{D}$ is a \textit{Ferrers}, \textit{skew Ferrers}, \textit{shifted Ferrers}, or \textit{shifted skew Ferrers} diagram.  We do not define these diagrams here, but we give pictures in Figure~\ref{fig:FerrersDiagrams} of diagrams of each of these types.  These are highly studied diagrams, and various ideals associated to them have quite nice properties (see for example~\cite{CN09,NR09,LS18}).  
	
	We label the rows of the diagram $\mathcal{D}$ from top to bottom by the variables $x_1,\ldots,x_n$ and the columns from left to right by $t_1,t_2,\ldots,t_r$.  We may consider the diagram $\mathcal{D}$ as a subset of the Cartesian product $\x\times\bt$ (with certain properties), and thus we can define an incidence graph on $\x\sqcup\bt$ with edges $\{x_i,t_j\}$ if $(x_i,t_j)\in\mathcal{D}$.  The bi-adjacency matrix of this incidence graph is easily seen from the diagram $\mathcal{D}$: the $(x_i,t_j)$ entry of the adjacency matrix is a $1$ if a box is present and a $0$ if a box is not present.
	
	It is straigtforward to see that this ordering of the vertices of the incidence graph of a Ferrers, skew Ferrers, or shifted skew Ferrers diagram will be an $\mathsf{L}$-free ordering.  For instance, the incidence graph of the skew Ferrers diagram in Figure~\ref{fig:FerrersDiagrams} is the essential variables incidence graph for the collection of ideals $\calI$ in Example~\ref{ex:LBorelFamily}.
	
	\begin{figure}
		\centering
		
		\begin{tikzpicture}[scale=.8]
		%Ferrer's Diagram
		%horizontal lines
		\draw[thick,black] (0,0)--(5,0) (0,-1)--(5,-1) (0,-2)--(5,-2) (0,-3)--(4,-3) (0,-4)--(3,-4);
		%vertical lines
		\draw[thick,black] (0,0)--(0,-4) (1,0)--(1,-4) (2,0)--(2,-4) (3,0)--(3,-4) (4,0)--(4,-3)  (5,0)--(5,-2);
		%column labels
		\node at (.5,.5){$t_1$};
		\node at (1.5,.5){$t_2$};
		\node at (2.5,.5){$t_3$};
		\node at (3.5,.5){$t_4$};
		\node at (4.5,.5){$t_5$};
		%row labels
		\node at (-.5,-.5){$x_1$};
		\node at (-.5,-1.5){$x_2$};
		\node at (-.5,-2.5){$x_3$};
		\node at (-.5,-3.5){$x_4$};
		\end{tikzpicture}
		
		\vspace{15 pt}
		
		\begin{tikzpicture}[scale=.6]
		%Skew Ferrer's Diagram
		%horizontal lines
		\draw[thick,black] (3,0)--(5,0) (2,-1)--(5,-1) (1,-2)--(5,-2) (0,-3)--(4,-3) (0,-4)--(3,-4);
		%vertical lines
		\draw[thick,black] (0,-3)--(0,-4) (1,-2)--(1,-4) (2,-1)--(2,-4) (3,0)--(3,-4) (4,0)--(4,-3)  (5,0)--(5,-2);
		\end{tikzpicture}
		\hspace{10 pt}
		\begin{tikzpicture}[scale=.6]
		%Shifted Ferrer's Diagram
		%horizontal lines
		\draw[thick,black] (0,0)--(5,0) (0,-1)--(5,-1) (1,-2)--(5,-2) (2,-3)--(4,-3) (3,-4)--(4,-4);
		%vertical lines
		\draw[thick,black] (0,0)--(0,-1) (1,0)--(1,-2) (2,0)--(2,-3) (3,0)--(3,-4) (4,0)--(4,-4)  (5,0)--(5,-2);
		\end{tikzpicture}
		\hspace{10 pt}
		\begin{tikzpicture}[scale=.6]
		%Skew Shifted Ferrer's Diagram
		%horizontal lines
		\draw[thick,black] (4,0)--(5,0) (2,-1)--(5,-1) (2,-2)--(5,-2) (2,-3)--(4,-3) (3,-4)--(4,-4);
		%vertical lines
		\draw[thick,black] (2,-1)--(2,-3) (3,-1)--(3,-4) (4,0)--(4,-4)  (5,0)--(5,-2);
		\end{tikzpicture}
		
		\caption{Top: a labeled Ferrers diagram.  Bottom (left to right): a skew Ferrers, shifted Ferrers, and shifted skew Ferrers diagram}
		\label{fig:FerrersDiagrams}
	\end{figure}
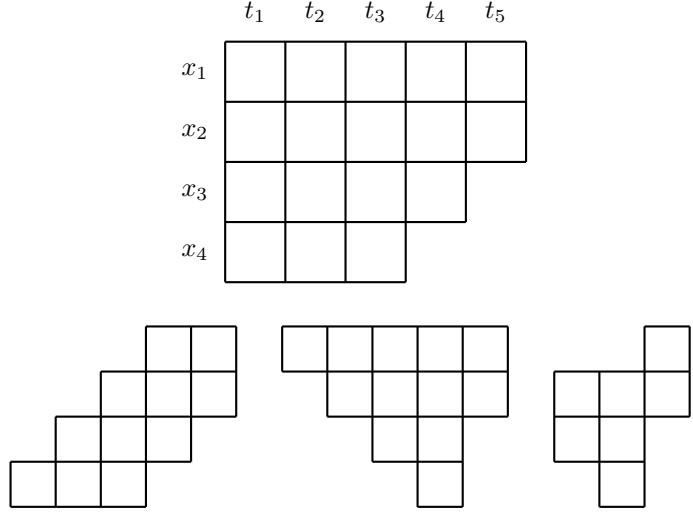
	
\end{example}

An important special case is when $\calI$ is a collection of principal Borel ideals.

\begin{proposition}\label{prop:Borel}
	Suppose $\calI=\{I_1,\ldots,I_r\}$ is a collection of principal Borel ideals of $\K[\x]$.  Then $\calI$ admits an $\mathsf{L}$-free ordering.
\end{proposition}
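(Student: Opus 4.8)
The plan is to reduce the proposition to a single structural fact about essential variables. The \textbf{main step}, and really the only nontrivial point, is to show that the set $E$ of essential variables of a principal Borel ideal $I=\langle\Borel(M)\rangle$ is an initial segment of $x_1,\ldots,x_n$: if $x_j\in E$ and $i<j$, then $x_i\in E$. To prove this I would first recall that for a principal Borel ideal $\gens(I)=\Borel(M)$, since every monomial of $\Borel(M)$ has degree $\deg M$ and so none properly divides another. Given $x_j\in E$, condition (1) of Definition~\ref{def:EssentialVariables} provides some $m\in\Borel(M)$ divisible by $x_j$, whence $\sigma_j(M)\ge\sigma_j(m)\ge 1$ by Lemma~\ref{lem:BorelMembership}; thus $x_k\mid M$ for some $k\ge j$. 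For $i<j$ we then have $k>i$, so $M':=\tfrac{x_i}{x_k}M$ is obtained from $M$ by a Borel move and hence $M'\in\Borel(M)=\gens(I)$. Now $x_i\mid M'$, which gives condition (1) for $x_i$, and $x_i$ occurs with exponent $a_i$ in $M$ but with exponent $a_i+1$ in $M'$, with $M$ and $M'$ both minimal generators of $I$, which gives condition (2) for $x_i$. Hence $x_i\in E$. (Alternatively one could extract the same fact from Observation~\ref{obs:EssentialVariables} together with the description of principal $L$-Borel ideals in Remark~\ref{rem:LBorelExt}, but the direct argument is short enough to include.)

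Next I would assemble the proposition from this. Write $E_k$ for the essential variables of $I_k$; by the main step each $E_k$ is of the form $\{x_1,\ldots,x_{p_k}\}$ for some $0\le p_k\le n$. Keep the fixed ordering $x_1,\ldots,x_n$ of $\x$ (this ordering is not ours to choose, since it determines which moves are Borel moves) and order $\bt$ arbitrarily, say as $t_1,\ldots,t_r$. Then in the bi-adjacency matrix of the essential variables incidence graph $G(\calI)$, the column indexed by $t_k$ has a $1$ exactly in rows $x_1,\ldots,x_{p_k}$. I would then verify directly that such a matrix has no submatrix of the form $\begin{bmatrix}1&0\\1&1\end{bmatrix}$: a $1$ in the lower-right corner of such a submatrix would sit in some row $x_j$ and column $t_v$ with $j\le p_v$, and the entry directly above it would lie in a row $x_h$ with $h<j\le p_v$ — forcing that entry to be $1$ rather than $0$, a contradiction. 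Therefore this ordering of the vertices of $G(\calI)$ is $\mathsf{L}$-free, which is precisely the assertion that $\calI$ admits an $\mathsf{L}$-free ordering.

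I do not anticipate a genuine obstacle: once the essential variables are identified as an initial segment, \emph{any} ordering of the $t_k$'s yields an $\mathsf{L}$-free ordering, so no combinatorial search is needed. The only mild subtlety worth a sentence in the write-up is the remark that $\gens(I)=\Borel(M)$ for a principal Borel ideal, since this is what makes Borel moves on $M$ produce minimal generators and thus makes both defining conditions of "essential variable" immediate to check.
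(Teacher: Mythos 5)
Your proof is correct, and the approach is essentially the paper's: identify that the essential variables $E_k$ of a principal Borel ideal form an initial segment $\{x_1,\ldots,x_{p_k}\}$ of the variables, and then read off the $\mathsf{L}$-free condition from the resulting ``staircase'' structure of the bi-adjacency matrix. There are two small but genuine deviations worth recording. First, the paper simply asserts the initial-segment fact (``Since $I_i$ is Borel, $E_i$ must either be empty or have the form $\{x_1,\ldots,x_k\}$\ldots''), whereas you supply a short proof using Lemma~\ref{lem:BorelMembership} and the equality $\gens(I)=\Borel(M)$; this is a worthwhile addition since the fact is the crux of the whole argument. Second, the paper re-indexes the ideals so that $|E_1|\ge\cdots\ge|E_r|$, producing a bi-adjacency matrix in the shape of a Ferrers diagram, and then appeals to that shape to conclude $\mathsf{L}$-freeness; you observe that no re-indexing of $\bt$ is needed at all, because the initial-segment structure of each column already rules out the submatrix $\left[\begin{smallmatrix}1&0\\1&1\end{smallmatrix}\right]$: a $1$ in position $(x_j,t_v)$ forces a $1$ in every higher position $(x_h,t_v)$ with $h<j$. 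This is a slightly stronger statement (every ordering of $\bt$ is $\mathsf{L}$-free, not merely the sorted one) and shortens the verification; the paper's choice buys the cleaner picture of a Ferrers diagram, which it then references in Example~\ref{ex:FerrersDiagrams}. Either route is fine.
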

\begin{proof}
	Let $E_i$ denote the set of essential variables of $I_i$ for $i=1,\ldots,r$.  Since $I_i$ is Borel, $E_i$ must either be empty or have the form $\{x_1,\ldots,x_k\}$ for some $2\le k\le n$ for every $1\le i\le r$.  If necessary, re-index the ideals $I_i$ so that $|E_1|\ge |E_2|\ge \cdots \ge |E_r|$.  Then the corresponding ordering on the vertices of the essential variables incidence graph $G(\calI)$ is clearly $\mathsf{L}$-free, hence so is $\calI$.  In fact, the bi-adjacency matrix of $G(\calI)$ (with this re-ordering) corresponds to a Ferrers diagram (see Example~\ref{ex:FerrersDiagrams}).
\end{proof}

\section{Koszul multi-Rees algebras of principal $L$-Borel ideals}\label{sec:MultiRees}

We begin this section by re-stating the main result of~\cite{OH99} in our context, as a statement about $L$-Borel ideals generated in degree one.

\begin{theorem}\cite{OH99}\label{thm:OH99}
	Let $\calI=\{I_1,\ldots,I_r\}$ be a collection of $L$-Borel ideals generated in degree one.  Then the multi-fiber ring $\K[\calI\bt]$ is Koszul if and only if the essential variables incidence graph $G(\calI)$ is chordal bipartite.
\end{theorem}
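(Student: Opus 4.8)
The plan is to recognize that an $L$-Borel ideal generated in degree one is nothing more than an ideal generated by a subset of the variables, so that $\K[\calI\bt]$ is, up to a polynomial extension, the toric edge ring of the bipartite graph $G(\calI)$; the theorem then follows directly from the main theorem of~\cite{OH99}. First I would observe that if $I_j$ is $L_j$-Borel and generated in degree one, then $\gens(I_j)=\{x_i:i\in S_j\}$ for some set $S_j$, and $S_j$ is closed under the order $<_{L_j}$ (if $x_i>_{L_j}x_k$ and $x_k\in\gens(I_j)$, then an $L_j$-Borel move on $x_k$ produces $x_i\in I_j$, hence $x_i\in\gens(I_j)$ since $I_j$ is generated in degree one). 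From Definition~\ref{def:EssentialVariables} it is then immediate that the essential variables of $I_j$ are exactly $\gens(I_j)$ when $|\gens(I_j)|\ge 2$, and are empty when $I_j=\langle x_i\rangle$ is principal. Consequently the essential variables incidence graph $G(\calI)$ is obtained from the bipartite graph $H$ on $\x\sqcup\bt$ with edge set $\{\{x_i,t_j\}: x_i\in\gens(I_j)\}$ by deleting the vertices $t_j$ of degree at most one (together with any pendant edge).

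Next I would identify the ring. Since $\calI$ is generated in degree one, $\K[\calI\bt]=\K[x_it_j:x_i\in\gens(I_j)]$, which is precisely the toric edge ring $\K[H]$: the generator $x_it_j$ corresponds to the edge $\{x_i,t_j\}$, distinct edges give distinct monomials, and the edge ring is exactly this subring. I would then check that deleting a pendant vertex $t_j$ of $H$ (i.e. one with $\gens(I_j)=\{x_i\}$) only removes a free polynomial variable: the indeterminate $T_{x_it_j}$ cannot occur in any binomial of the toric ideal, since comparing $t_j$-degrees on the two sides of such a binomial forces $T_{x_it_j}$ to appear with the same exponent on both sides (as $x_it_j$ is the unique generator involving $t_j$). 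Hence $\K[\calI\bt]\cong \K[G(\calI)]\otimes_\K \K[\,T_{x_it_j}:\gens(I_j)=\{x_i\}\,]$, a polynomial extension of $\K[G(\calI)]$, so $\K[\calI\bt]$ is Koszul if and only if $\K[G(\calI)]$ is Koszul. Dually, attaching or deleting pendant vertices does not change whether a graph is chordal bipartite (a pendant vertex lies on no cycle), so $H$ is chordal bipartite if and only if $G(\calI)$ is.

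Finally I would invoke the main result of~\cite{OH99}: the toric edge ring of a finite bipartite graph is Koszul if and only if the graph is chordal bipartite (equivalently, its toric ideal admits a quadratic Gr\"obner basis). Applying this to $H$ (or directly to $G(\calI)$) and combining it with the two equivalences of the previous paragraph yields that $\K[\calI\bt]$ is Koszul if and only if $G(\calI)$ is chordal bipartite, which is the assertion.

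The genuine mathematical content here resides entirely in~\cite{OH99}; the only delicate points are the bookkeeping that separates the essential variables $E_j$ from $\gens(I_j)$ in the principal case, and the verification that the extra generators of $\K[\calI\bt]$ coming from pendant vertices are free polynomial variables. I expect this bookkeeping, rather than any substantive argument, to be the only real obstacle.
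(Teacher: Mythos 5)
Your proof is correct and takes essentially the same approach as the paper, which simply restates Ohsugi–Hibi's theorem in the $L$-Borel language without supplying details; your bookkeeping (degree-one $L$-Borel ideals are variable ideals, essential variables match generators except for principal ideals, pendant edges contribute free polynomial variables, pendant deletion preserves chordal bipartiteness) is exactly what is needed and is carried out correctly. The one small imprecision is that $G(\calI)$ retains the isolated $t_j$ vertices rather than deleting them, but this affects neither the edge ring nor chordal bipartiteness, so the argument stands.
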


Our goal in this section is to generalize Theorem~\ref{thm:OH99} as much as possible to the multi-Rees algebra and multi-fiber ring of an arbitrary collection of $L$-Borel ideals.  (We cannot completely generalize Theorem~\ref{thm:OH99} -- one difficulty is that we are not free to re-order the $\x$ variables as this could destroy the $L$-Borel property for the ideals in $\calI$.)  We focus on the multi-Rees algebra, however slight modifications in the proofs will yield similar statements for the multi-fiber ring, as we indicate later.  Before proceeding, we show that we can assume the $L$-Borel ideals of $\calI$ are simply extensions of principal Borel ideals from polynomial subrings (see Remark~\ref{rem:LBorelExt}).

\begin{proposition}\label{prop:multireduction}
	Let $M_1,\ldots,M_r$ be a collection of monomials in $\K[\x]$.  Let $L_1,\ldots,L_r$ be linear posets on $\x$, and put $I_i=L_i$-$\Borel(M_i)$ for $i=1,\ldots,r$.  Suppose that, for $i=1,\ldots,r$, $M_i$ factors as $M_i=M'_im_i$, where $M'_i$ is divisible only by variables in the support of the essential variables $E_i$ and $m_i$ is divisible only by variables not in the support of $E_i$.  Put $I'_i=L_i$-$\Borel(M_i)$ for $i=1,\ldots,r$ and $\calI'=\{I'_1,\ldots,I'_r\}$.  Then $R[\calI\bt]\cong R[\calI'\bt]$ and $\K[\calI\bt]\cong \K[\calI'\bt]$.
\end{proposition}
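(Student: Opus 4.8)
The plan is to build an explicit monomial substitution on the ambient polynomial ring $\K[\x,\bt]$ which carries $R[\calI'\bt]$ isomorphically onto $R[\calI\bt]$, and likewise $\K[\calI'\bt]$ onto $\K[\calI\bt]$; here $I_i'=L_i\text{-}\Borel(M_i')$ denotes the principal $L_i$-Borel ideal generated by the essential part $M_i'$ of $M_i$.

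First I would record how the minimal generators of $I_i$ and $I_i'$ are related. Since every variable dividing $m_i$ is non-essential for $I_i$, no $L_i$-Borel move applied to $M_i=M_i'm_i$ can alter the factor $m_i$; hence $L_i\text{-}\Borel(M_i)=m_i\cdot\Borel(M_i')$, with the Borel closure taken inside $\K[E_i]$, which is precisely the content of Remark~\ref{rem:LBorelExt}. Dividing out the common factor $m_i$ then identifies $\gens(I_i)$ with $\{m_i n : n\in\gens(I_i')\}$; in particular $E_i$ is also the essential-variable set of $I_i'$, so $G(\calI)=G(\calI')$, although we will not need this.

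Next I would define the $\K$-algebra homomorphism $\psi\colon\K[\x,\bt]\to\K[\x,\bt]$ by $\psi(x_j)=x_j$ for $1\le j\le n$ and $\psi(t_k)=m_kt_k$ for $1\le k\le r$. The only point that needs an argument is injectivity of $\psi$: the $n+r$ elements $x_1,\dots,x_n,m_1t_1,\dots,m_rt_r$ are algebraically independent over $\K$ — expanding a hypothetical relation in the $\K[\x]$-module basis $\{\bt^{\bbeta}\}$ of $\K[\x,\bt]$ and using that $\K[\x]$ is a domain forces all coefficients to vanish — so a $\K$-algebra map out of the polynomial ring $\K[\x,\bt]$ carrying its $n+r$ variables to these elements must be injective. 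Thus $\psi$ is an isomorphism onto the subring $\K[x_1,\dots,x_n,m_1t_1,\dots,m_rt_r]$.

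Finally I would verify that $\psi$ matches generating sets. For $n\in\gens(I_k')\subseteq\K[E_k]\subseteq\K[\x]$ we have $\psi(n t_k)=n\cdot m_k t_k=m_k n t_k$, so $\psi$ sends the generators $x_1,\dots,x_n$ together with $\{n t_k : n\in\gens(I_k'),\,1\le k\le r\}$ of $R[\calI'\bt]$ onto $x_1,\dots,x_n$ together with $\{m_k n t_k : n\in\gens(I_k')\}=\{m't_k : m'\in\gens(I_k)\}$, which generate $R[\calI\bt]$; injectivity of $\psi$ then gives $R[\calI'\bt]\cong R[\calI\bt]$. The same computation restricted to the generators $\{n t_k\}$ of $\K[\calI'\bt]$ yields $\K[\calI'\bt]\cong\K[\calI\bt]$. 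I expect the only genuinely substantive step to be the first one — the reduction of $\gens(I_i)$ to $m_i\cdot\gens(I_i')$ through the non-essential variables — and even that is essentially an application of Remark~\ref{rem:LBorelExt}; the remainder is a formal verification that the substitution $t_k\mapsto m_kt_k$ is an injective ring map identifying the two presentations.
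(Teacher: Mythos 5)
Your proposal is correct and follows essentially the same route as the paper: both define the substitution $t_k\mapsto m_k t_k$, which carries the generating set $\{x_j\}\cup\{n t_k : n\in\gens(I_k')\}$ of $R[\calI'\bt]$ bijectively onto $\{x_j\}\cup\{m_k n t_k\}=\{x_j\}\cup\{m' t_k : m'\in\gens(I_k)\}$. The one real difference is cosmetic but worthwhile: you define the map on all of $\K[\x,\bt]$ and prove injectivity there via algebraic independence of $x_1,\dots,x_n,m_1 t_1,\dots,m_r t_r$ (or, equivalently, by noting the map has a left inverse $t_k\mapsto m_k^{-1}t_k$ after inverting the $m_k$), whereas the paper defines it directly on $R[\calI'\bt]$ and argues injectivity informally by ``reading off the powers of $m_i$ from the powers of $t_i$.'' Your version of the injectivity step is a bit more rigorous; the argument is otherwise the same.
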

\begin{proof}
	We define a map $\phi:R[\calI'\bt]\to R[\calI\bt]$ on generators by $\phi(x_i)=x_i$ for $i=1,\ldots,n$ and $\phi(nt_i)=m_int_i$ when $n\in \gens(I'_i)$, and extend linearly.  This map is clearly surjective.  Since the powers of $m_i$ can be read off the powers of $t_i$, it is also straightforward to see that $\phi$ is injectve.  Hence $R[\calI\bt]\cong R[\calI'\bt]$.  A similar map shows $\K[\calI\bt]\cong \K[\calI'\bt]$.
\end{proof}

Henceforth we only consider $\calI=\{I_1,\ldots,I_r\}$, where $I_i=L_i$-$\Borel(M_i)$ and $M_i$ is only divisible by essential variables of $I_i$.  Equivalently, $I_i$ is the extension of the ideal $\Borel(M_i)$ from the polynomial subring $\K[E_i]$ for $i=1,\ldots,r$, where $E_i$ is the set of essential variables of $I_i$.

\begin{convention}[Monomial order for $\K\left\lbrack\x,\bT\right\rbrack$ ]\label{conv:MonomialOrder}
	Suppose $\calI=\{I_1,\ldots,I_r\}$ is a collection of principal $L$-Borel ideals of $\K[\x]$ and that the given ordering of $\calI$ is an $\mathsf{L}$-free ordering.  We order the variables of $\K[\x,\bT]$ by $T_{mt_i}\prec T_{nt_j}$ if $i>j$ or $i=j$ and $m\prec_{\grevlex} n$.  Furthermore we make $T_{mt_i}\succ x_j$ for any variable $T_{mt_i}\in\bT$ and $x_j\in\x$.  On top of this ordering of the variables of $\K[\x,\bT]$ we put the lexicographic order.
\end{convention}

\begin{definition}\label{def:MultiSink}
	Let $M_1,\ldots,M_r$ be a collection of monomials in $\K[\x]$.  Let $L_1,\ldots,L_r$ be linear posets on $\x$, and put $I_i=L_i$-$\Borel(M_i)$ for $i=1,\ldots,r$.  We assume that $L_i=L_{E_i}$, the linear poset associated to the essential variables of $I_i$, for $i=1,\ldots,r$.  We further assume, using Proposition~\ref{prop:multireduction} if necessary, that every variable dividing $M_i$ is in the support of $E_i$, so $I_i$ is the extension of a principal Borel ideal from the subring $\K[E_i]$ for $i=1,\ldots,r$.  Suppose $\mu\bt^{\bbeta}=\mu t_1^{\beta_1}\cdots t_r^{\beta_r}$ is a monomial in $\K[\x,\bt]$.  Define $\mu_1,\ldots,\mu_r$ inductively as follows.  If $\beta_1=0$, then set $\mu_1=1$.  Otherwise, if there is a monomial in $L_1$-$\Borel(M_1^{\beta_1})$ which divides $\mu$, let $\mu_1$ be the smallest monomial in $L_1$-$\Borel(M_1^{\beta_1})$ under Borel order dividing $\mu$ (which exists by Lemma~\ref{lem:BorelMultidegreeDivision}).  If there is no monomial in $L_1$-$\Borel(M_1^{\beta_1})$ which divides $\mu$, then we stop the procedure and say $\mu_i$ is not defined for $i=1,\ldots,r$.  Now we define $\mu_{\ell+1}$ from $\mu_{\ell}$.  First, if $\beta_{\ell+1}=0$, then $\mu_{\ell+1}=1$.  Otherwise, if there is a monomial in $L_{\ell+1}$-$\Borel(M_{\ell+1}^{\beta_{\ell+1}})$ which divides $\mu/(\mu_1\cdots\mu_{\ell})$, then we set $\mu_{\ell+1}$ to be the smallest such monomial under Borel order.  If there is no monomial in $L_{\ell+1}$-$\Borel(M_{\ell+1}^{\beta_{\ell+1}})$ which divides $\mu/(\mu_1\cdots\mu_{\ell})$, then we stop the procedure and say $\mu_{k}$ is not defined for $\ell<k\le r$.  If $\mu_1,\ldots,\mu_r$ are all defined then, for $i=1,\ldots,r$, we define
	\[
	\bT^{\mu_it_i^{\beta_i}}_{\min}:=\prod_{j=1}^{\beta_i} T_{N_jt_i},
	\]
	where $\{N_1,\ldots,N_{\beta_i}\}=\bs(M_i,\mu_i)$.  (The Borel sort algorithm should be carried out in the polynomial ring $\K[E_i]$, where $E_i$ is the set of essential variables of $I_i$, for $i=1,\ldots,k$.)  Finally (again assuming $\mu_1,\ldots,\mu_r$ are defined) we put $\nu=\mu/(\mu_1\cdots\mu_r)$ and define
	\[
	\bT^{\mu\bt^{\bbeta}}_{\min}:=\nu \bT^{\mu_1t_1^{\beta_1}}_{\min} \cdots \bT^{\mu_rt_r^{\beta_r}}_{\min}.
	\]
	If any of the monomials $\mu_1,\cdots,\mu_r$ are not defined, then we say $\bT^{\mu\bt^{\bbeta}}_{\min}$ is not defined.
\end{definition}

\begin{remark}
	By construction, if $\bT^{\mu\bt^{\bbeta}}_{\min}$ is defined then it is the lexicographically smallest monomial in $\K[\x,\bT]_{\mu\bt^{\bbeta}}$.
\end{remark}

The procedure outlined in Definition~\ref{def:MultiSink} often fails, however we will show that it never fails if $\calI$ is an $\mathsf{L}$-free ordered collection of $L$-Borel ideals.

\begin{example}
	Let $I_1=\langle x_1,x_2\rangle, I_2=\langle x_1,x_3\rangle,$ and $I_3=\langle x_2,x_3\rangle$, and $\calI=\{I_1,I_2,I_3\}$ be the collection of $L$-Borel ideals from Example~\ref{ex:NonKoszul}.  Let $\mu\bt^{\bbeta}=x_1x_2x_3t_1t_2t_3$, so $\mu=x_1x_2x_3$.  Then the minimal monomial in $I_1$ under Borel order dividing $\mu$ is $x_2$.  So $\mu_1=x_2$.  Likewise the minimal monomial in $I_2$ dividing $\mu/\mu_1=x_1x_3$ is $x_3$, so $\mu_2=x_3$.  However it is then impossible to define $\mu_3$.  Notice that there is no $\mathsf{L}$-free ordering of $\calI$ in this case.
\end{example}

\begin{example}
	Let $\calI$ be the collection of $L$-Borel ideals from Example~\ref{ex:LBorelFamily}, and put $\mu\bt^{\bbeta}=x_1^6x_2^9x_3^6x_4^4t_1t_2^2t_3^2t_4^2t_5^2$.  Then we have $\mu_1=x_4$, $\mu_2=x_3^2x_4^2$, $\mu_3=x_3^3x_4$, $\mu_4=x_1^2x_2^3x_3$, $\mu_5=x_1^2x_2^4$, and $\nu=x_1^2x_2^2$.  We then compute
	\[
	\bT^{\mu\bt^{\bbeta}}_{\min}=x_1^2x_2^2T_{x_4t_1}T^2_{x_3x_4t_2}T_{x_3^2t_3}T_{x_3x_4t_3}T_{x_1x_2^2t_4}T_{x_1x_2x_3t_4}T^2_{x_1x_2^2t_5}.
	\]
\end{example}

Now we define the set of quadrics which we will show is a Gr\"obner basis for the defining equations of the multi-Rees algebra.  Suppose that $\calI$ is an $\mathsf{L}$-free ordered collection of $L$-Borel ideals.  We consider quadrics in the defining ideal of the multi-Rees algebra $R[\calI\bt]$ of three types.  First, we have quadrics of \textit{symmetric type}; we name them symmetric type because they are relations on the symmetric algebra.  These are quadrics of the form
\[
x_sT_{mt_i}-x_tT_{\frac{x_s}{x_t}mt_i}
\]
where $m\in I_i$ and $\frac{x_s}{x_t}m$ is an $L_i$-Borel move on $m$.

Then we have quadrics of \textit{fiber type}, which are relations on the defining ideal of the multi-fiber ring.  We split these further into \textit{principal} and \textit{bi-principal} fiber type.  A quadric of principal fiber type has the form
\[
T_{mt_i} T_{nt_i}-T_{\frac{x_s}{x_t}mt_i}T_{\frac{x_t}{x_s}nt_i},
\]
where $m,n\in I_i$, $\frac{x_s}{x_t}m$ is an $L_i$-Borel move on $m$, and $\frac{x_t}{x_s}n$ is a reverse $L_i$-Borel move on $n$.
A quadric of bi-principal fiber type has the form
\[
T_{mt_i} T_{nt_j}-T_{\frac{x_t}{x_s}mt_i}T_{\frac{x_s}{x_t}nt_j},
\]
where $m\in I_i,n\in I_j$, $i<j$, $\frac{x_t}{x_s}m$ is a reverse $L_i$-Borel move on $m$, and $\frac{x_s}{x_t}n$ is an $L_j$-Borel move on $n$.

\begin{theorem}\label{thm:MultiSink}
	Let $M_1,\ldots,M_r$ be a collection of monomials in $\K[\x]$.  Let $L_1,\ldots,L_r$ be linear posets on $\x$, and put $I_i=L_i$-$\Borel(M_i)$ for $i=1,\ldots,r$.  Let $\calI=\{I_1,\ldots,I_r\}$ be an $\mathsf{L}$-free ordering and $\mathcal{B}\subset \K[\x,\bT]$ the set of quadrics of symmetric and fiber type. 
	
	Then, for any $\mu\bt^{\bbeta}=\mu t_1^{\beta_1}\cdots t_r^{\beta_r}\in\K[\x,\bt]$ for which $\K[\x,\bT]_{\mu\bt^{\bbeta}}$ is non-empty, the monomial $\bT^{\mu\bt^{\bbeta}}_{\min}\in \K[\x,\bT]_{\mu\bt^{\bbeta}}$ from Definition~\ref{def:MultiSink} is the unique sink of $\vec{\Gamma}_{\mu\bt^{\bbeta},\calB}$.  In particular, the quadrics of symmetric and fiber type form a quadratic Gr\"obner basis for $J_{R[\calI\bt]}$ with respect to the monomial order in Convention~\ref{conv:MonomialOrder}.
\end{theorem}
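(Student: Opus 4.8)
The plan is to verify the second criterion of Proposition~\ref{prop:Graph}: that $\vec{\Gamma}_{\mu\bt^{\bbeta},\calB}$ has a unique sink whenever it is non-empty. Since edges of $\vec{\Gamma}_{\mu\bt^{\bbeta},\calB}$ strictly decrease the monomial order of Convention~\ref{conv:MonomialOrder}, this is a finite directed acyclic graph, so every non-empty fiber graph has at least one sink. It therefore suffices to prove that \emph{every} sink equals $\bT^{\mu\bt^{\bbeta}}_{\min}$; as a byproduct this shows $\bT^{\mu\bt^{\bbeta}}_{\min}$ is defined (i.e. the procedure of Definition~\ref{def:MultiSink} never stalls) whenever the fiber is non-empty. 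Using Proposition~\ref{prop:multireduction} we reduce to the case where each $I_i$ is the extension of a principal Borel ideal from $\K[E_i]$, with $E_i$ the set of essential variables of $I_i$ and $L_i=L_{E_i}$; in particular every minimal generator of $I_i$ lies in $\K[E_i]$. We then induct on $r$, the base case $r=0$ being trivial (the fiber is the single monomial $\mu$).

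For the inductive step, let $N=\nu'\prod_{i=1}^{r}\prod_{p=1}^{\beta_i}T_{N_{i,p}t_i}\in\K[\x,\bT]_{\mu\bt^{\bbeta}}$ be a sink, where $N_{i,p}\in\gens(I_i)$ and $\nu'\in\K[\x]$; thus $\nu'\prod_{i,p}N_{i,p}=\mu$. The heart of the argument is to pin down the factors coming from $I_1$, since the variables $T_{\bullet t_1}$ are lexicographically dominant. Set $\mu_1^N=\prod_{p}N_{1,p}$. By Lemma~\ref{lem:BorelFactorization}, $\mu_1^N\in\Borel(M_1^{\beta_1})$, and $\mu_1^N$ divides $\mu$, so by Lemma~\ref{lem:BorelMultidegreeDivision} $\mu_1^N\in\Borel(\mu_1)$, where $\mu_1$ is the Borel-minimal monomial of $\Borel(M_1^{\beta_1})$ dividing $\mu$ (as in Definition~\ref{def:MultiSink}). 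I claim $\mu_1^N=\mu_1$. If not, Lemma~\ref{lem:BorelFactorizationMultidegreeDivision} supplies indices $u<v$ in $E_1$ and a factor $N_{1,\ell}$ with $\tfrac{x_v}{x_u}N_{1,\ell}\in\gens(I_1)$, $\tfrac{x_v}{x_u}\mu_1^N\mid\mu$, and $\tfrac{x_v}{x_u}\mu_1^N\in\Borel(M_1^{\beta_1})$. Since $\tfrac{x_v}{x_u}\mu_1^N\mid\mu$ while $\mu_1^N$ already accounts for all $I_1$-factors of $N$, the variable $x_v$ must divide either $\nu'$ or some $N_{j_0,p_0}$ with $j_0>1$. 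In the first case, the symmetric-type quadric $x_uT_{\frac{x_v}{x_u}N_{1,\ell}t_1}-x_vT_{N_{1,\ell}t_1}$ lies in $\calB$ and divides $N$ (note $x_v\mid\nu'$); applying it replaces $T_{N_{1,\ell}t_1}$ by the $\grevlex$-smaller variable $T_{\frac{x_v}{x_u}N_{1,\ell}t_1}$, hence produces a lexicographically smaller monomial in the same fiber, contradicting that $N$ is a sink. In the second case we invoke the $\mathsf{L}$-free hypothesis: from $x_u,x_v\in E_1$ (so $\{x_u,t_1\},\{x_v,t_1\}\in E(G(\calI))$) and $x_v\in E_{j_0}$ (since $x_v\mid N_{j_0,p_0}\in\K[E_{j_0}]$), the $\mathsf{L}$-free ordering forces $\{x_u,t_{j_0}\}\in E(G(\calI))$, i.e. $x_u\in E_{j_0}$; hence $\tfrac{x_u}{x_v}N_{j_0,p_0}$ is a genuine $L_{j_0}$-Borel move on $N_{j_0,p_0}$ and so lies in $\gens(I_{j_0})$. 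Then the bi-principal fiber-type quadric $T_{N_{1,\ell}t_1}T_{N_{j_0,p_0}t_{j_0}}-T_{\frac{x_v}{x_u}N_{1,\ell}t_1}T_{\frac{x_u}{x_v}N_{j_0,p_0}t_{j_0}}$ lies in $\calB$, divides $N$, and again yields a lexicographically smaller monomial in the fiber. Either way we contradict that $N$ is a sink, so $\mu_1^N=\mu_1$.

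With $\mu_1^N=\mu_1$ fixed, the multiset $\{N_{1,p}\}$ is a factorization of $\mu_1$ into $\beta_1$ factors from $\Borel(M_1)$, i.e. a vertex of $\vec{\Gamma}_{\mu_1,\calB(M_1)}$ inside $\K[T_m:m\in\Borel(M_1)]$. The principal fiber-type quadrics supported on $I_1$ are precisely the images under $T_m\mapsto T_{mt_1}$ of the binomials in $\calB(M_1)$ of~\eqref{eq:ReesAlgebraGB}, and the induced order on the $t_1$-block agrees with the order of Theorem~\ref{thm:LexOrderReesAlgebraPrincipleBorel}; since this block is lexicographically dominant, an outgoing edge of $\prod_pT_{N_{1,p}}$ in $\vec{\Gamma}_{\mu_1,\calB(M_1)}$ would lift to an outgoing edge of $N$. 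As $N$ is a sink, Theorem~\ref{thm:LexOrderReesAlgebraPrincipleBorel} forces $\prod_pT_{N_{1,p}t_1}=\bT^{\mu_1t_1^{\beta_1}}_{\min}$. Now write $N=\bT^{\mu_1t_1^{\beta_1}}_{\min}\cdot N^{\mathrm{rest}}$, where $N^{\mathrm{rest}}=\nu'\prod_{i\ge 2}\prod_pT_{N_{i,p}t_i}$ lies in the fiber over $(\mu/\mu_1)\,t_2^{\beta_2}\cdots t_r^{\beta_r}$ for the collection $\calI'=\{I_2,\dots,I_r\}$, which is again an $\mathsf{L}$-free ordering (deleting the first column of the bi-adjacency matrix preserves the $\mathsf{L}$-free property). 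Any outgoing edge of $N^{\mathrm{rest}}$ for $\calI'$ comes from a symmetric or fiber quadric not involving $t_1$, hence is also an outgoing edge of $N$; so $N^{\mathrm{rest}}$ is a sink, and by the inductive hypothesis it equals the monomial $\bT^{\mu\bt^{\bbeta}}_{\min}$ of Definition~\ref{def:MultiSink} for $\calI'$. Since the data $\mu_2,\dots,\mu_r,\nu$ of Definition~\ref{def:MultiSink} for $\calI$ coincide with the corresponding data for $\calI'$ applied to $\mu/\mu_1$, we conclude $N=\bT^{\mu\bt^{\bbeta}}_{\min}$, completing the induction. The "in particular" statement then follows from Proposition~\ref{prop:Graph}: the quadrics of symmetric and fiber type are visibly binomials in $J_{R[\calI\bt]}$, so they form a Gr\"obner basis, and one checks that each leading term (the "outer" product, such as $x_tT_{\frac{x_s}{x_t}mt_i}$ or $T_{m't_i}T_{n't_j}$) is squarefree.

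The main obstacle is the bi-principal case in the proof that $\mu_1^N=\mu_1$: the whole point of the $\mathsf{L}$-free condition is to guarantee that the move compensating a reverse Borel move on an $I_1$-factor is a \emph{legal} Borel move in the later ideal $I_{j_0}$ (so that the resulting monomial is still a generator, hence gives an honest quadric in $\calB$); without it Example~\ref{ex:NonKoszul} shows the conclusion can fail. A secondary point needing care is the interface with Theorem~\ref{thm:LexOrderReesAlgebraPrincipleBorel}: one must verify that the monomial order of Convention~\ref{conv:MonomialOrder} restricts on the $t_1$-block to the lexicographic-over-$\grevlex$ order used there, and that this block is lexicographically dominant, so that the $I_1$-part may be optimized greedily before passing to $\{I_2,\dots,I_r\}$. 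Everything else is routine bookkeeping: for each deviation from $\bT^{\mu\bt^{\bbeta}}_{\min}$ one exhibits a specific quadric of $\calB$, and checks it divides $N$, keeps one in the same fiber, and strictly decreases the lexicographic order.
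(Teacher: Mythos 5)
Your proof is correct and follows essentially the same path as the paper's: induct (you on $r$, the paper on the support of $\bbeta$ -- equivalent bookkeeping), pin down the $t_1$-block factorization as the Borel-minimal one by exhibiting a symmetric or bi-principal fiber quadric giving an outgoing edge whenever it deviates, invoke Theorem~\ref{thm:LexOrderReesAlgebraPrincipleBorel} to force that block to $\bT^{\mu_1t_1^{\beta_1}}_{\min}$, and then pass to the remaining ideals by induction. Your explicit citation of Lemma~\ref{lem:BorelFactorizationMultidegreeDivision} to move the reverse Borel move to a single factor is in fact slightly cleaner than the paper's appeal to Lemma~\ref{lem:BorelMultidegreeDivision} alone, but the underlying argument is identical.
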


\begin{proof}
	We induct on the size of the support of $\bbeta$; that is, the number of integers $\beta_i$ which are non-zero.  If $\bbeta=\mathbf{0}$, then $\mu'=\mu=\bT^{\mu\bt^{\bbeta}}_{\min}$ is the only monomial in $\K[\x,\bT]_{\mu\bt^{\bbeta}}$, so the theorem is trivially satisfied.
	
	Now suppose that $\bt^{\bbeta}$ is divisible by $t_u$ for some $u\ge 1$.  We assume $u$ is the least integer so that $t_u$ divides $\bt^{\bbeta}$.  If $u=1$, then $\mu_u=\mu_1$ in the notation of Definition~\ref{def:MultiSink}.  Otherwise $\mu_1=\mu_2=\cdots=\mu_{u-1}=1$, and $\mu_u$ is the smallest monomial in $\Borel(M_u^{\beta_u})$ which divides $\mu$ (since we assume $\K[\x,\bT]_{\mu\bt^{\bbeta}}$ is non-empty there must be at least one monomial in $\Borel(M_u^{\beta_u})$ which divides $\mu$).  Now suppose $\bT'=\nu'\prod_{i=u}^{k}\prod_{s=1}^{\beta_i} T_{m_s^{(i)}t_i}$ is a monomial in $\K[\x,\bT]_{\mu\bt^{\bbeta}}$, where $\nu'\in\K[\x]$ and the superscript $(i)$ records that $m_s^{(i)}\in \Borel(M_i)$.  Assume further that $\bT'$ is a sink.  Put $\bT'_u:=\prod_{s=1}^{\beta_u}T_{m_s^{(u)}t_u}$ and $\mu'_u:=\prod_{s=1}^{\beta_u} m_s^{(u)}$.
	
	First, assume that $\mu'_u=\mu_u$.  Then, since we assume $\bT'$ is a sink, we must have $\bT'_u=\bT^{\mu_ut_u^{\beta_u}}_{\min}$ by Theorem~\ref{thm:LexOrderReesAlgebraPrincipleBorel} (here we use only the fiber quadrics of principal type).  Since we assume $\bT'$ is a sink, $\bT'/\bT'_u$ must also be a sink of $\vec{\Gamma}_{\mu\bt^{\bbeta}/(\mu_u t_u^{\beta_u}),\calB}$.  By induction on the size of the support of $\bbeta$, $\bT'/\bT'_u=\bT^{\mu\bt^{\bbeta}/(\mu_u t_u^{\beta_u})}_{\min}$ hence $\bT'=\bT^{\mu\bt^{\bbeta}}_{\min}$.
	
	Now assume $\mu'_u\neq \mu_u$.  We necessarily have that $\mu'_u$ divides $\mu$ and $\mu'_u\in\Borel(M_u^{\beta_u})$.  It follows from the definition of $\mu_u$ and Lemma~\ref{lem:BorelMultidegreeDivision} that $\mu'_u\in L_u$-$\Borel(\mu_u)$ and there is a reverse $L_u$-Borel move $\frac{x_j}{x_h}\mu'_u\in L_u$-$\Borel(\mu_u)$ so that $\frac{x_j}{x_h}\mu'_u$ divides $\mu$.  Since $\prod_{s=1}^{\beta_i} m^{(i)}_s=\mu'_i$, the same reverse Borel move can be applied to one of the factors $m^{(i)}_s$, preserving that $\frac{x_j}{x_h}m^{(i)}_s\in L_u$-$\Borel(M_u)$.  Without loss assume $s=1$, so $\frac{x_j}{x_h}m^{(i)}_1\in L_u$-$\Borel(M_u)$.
	
	We consider two cases.  First, if $x_j$ divides $\nu$, then
	\[
	\bT''=\frac{x_h}{x_j}\bT'\cdot T_{\frac{x_j}{x_h}m^{(u)}_1t_u}/T_{m^{(u)}_1t_u}\in \K[\x,\bt]_{\mu\bt^{\bbeta}},
	\]
	$\bT''\prec \bT'$, and $\bT'-\bT''$ is divisible by the quadric 
	\[
	x_jT_{m^{(u)}_1t_u}-x_hT_{\frac{x_j}{x_h}m^{(u)}_1t_u}
	\]
	of symmetric type.  Hence $\bT'\rightarrow \bT''$ is a directed edge of $\vec{\Gamma}_{\mu\bt^{\bbeta},\calB}$ and $\bT'$ is not a sink, contrary to assumption.
	
	Now suppose $x_j$ does not divide $\nu$.  Since $\frac{x_j}{x_h}\mu'_u$ divides $\mu$, it follows that $x_j$ divides $\mu/\mu'_u$.  Thus there is some index $v>u$ and some monomial $n\in\Borel(M_v)$ so that $x_j$ divides $n$ and $T_{nt_v}$ divides $\bT'$.  Since $\calI$ is an $\mathsf{L}$-free ordering, $x_h$ must also be an essential variable of $I_v$ and thus $\frac{x_h}{x_j}n$ is an $L_v$-Borel move on $n$.  Thus
	\[
	\bT''=\bT'\cdot\frac{T_{x_j/x_hm^{(u)}_1t_u}}{T_{m^{(u)}_1t_u}}\cdot\frac{T_{x_h/x_j nt_v}}{T_{nt_v}}\in \K[\x,\bt]_{\mu\bt^{\bbeta}},
	\]
	$\bT''\prec \bT'$, and $\bT'-\bT''$ is divisible by the quadric
	\[
	T_{{m^{(u)}_1t_u}}T_{nt_v}-T_{x_j/x_hm^{(u)}_1t_u}T_{x_h/x_j nt_v}
	\]
	of bi-principal fiber type.  Hence $\bT'\rightarrow \bT''$ is a directed edge of $\vec{\Gamma}_{\mu\bt^{\bbeta},\calB}$ and $\bT'$ is not a sink, contrary to assumption.
	
	We conclude that if $\bT'$ is a sink, then $\bT'=\bT^{\mu\bt^{\bbeta}}_{\min}$.  Since $\vec{\Gamma}_{\mu\bt^{\bbeta},\calB}$ must have at least one sink, $\bT^{\mu\bt^{\bbeta}}_{\min}$ is the unique sink.
\end{proof}

Although we have focused on the defining equations of the multi-Rees algebra, a similar statement is true for the defining equations of the multi-fiber ring of the multi-Rees algebra.  We omit the proof as it is essentially a repeat of the proof of Theorem~\ref{thm:MultiSink} except we have no need of the quadrics of symmetric type.

\begin{theorem}\label{thm:MultiFiber}
	Suppose $\calI=\{I_1,\ldots,I_r\}$ is an $\mathsf{L}$-free ordered collection of principal $L$-Borel ideals.  Then the quadrics of fiber type form a quadratic Gr\"obner basis for the defining equations $J_{\K[\calI\bt]}\subset \K[\bT]$ for the multi-fiber ring $\K[\calI\bt]$.  We take the monomial order on $\K[\bT]$ to be the monomial order induced on $\K[\bT]$ as a subring of $\K[\x,\bT]$, where the latter is given the monomial order of Convention~\ref{conv:MonomialOrder}. 
\end{theorem}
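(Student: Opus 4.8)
The plan is to deduce Theorem~\ref{thm:MultiFiber} from Theorem~\ref{thm:MultiSink} and Proposition~\ref{prop:Graph}, exploiting the fact that a principal $L$-Borel ideal is automatically generated in a single degree. Using Proposition~\ref{prop:multireduction} we may assume each $M_i$ is divisible only by the essential variables of $I_i$, as in Theorem~\ref{thm:MultiSink}; this changes neither $\K[\calI\bt]$ nor, up to relabelling the $\bT$ variables, the induced monomial order. Write $\psi\colon\K[\bT]\to\K[\x,\bt]$ for the toric map $\psi(T_{mt_i})=mt_i$, so that $\K[\calI\bt]=\mathrm{im}(\psi)$ and $J_{\K[\calI\bt]}=\ker\psi$. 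Let $\calB'$ be the set of fiber-type quadrics (principal and bi-principal); each lies in $\ker\psi$, so by Proposition~\ref{prop:Graph} it suffices to show that $\vec{\Gamma}_{\mu\bt^{\bbeta},\calB'}$ is empty or has a unique sink for every monomial $\mu\bt^{\bbeta}\in\K[\x,\bt]$, where $\K[\bT]$ carries the order induced from Convention~\ref{conv:MonomialOrder}.

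First I would note that each $I_i=L_i$-$\Borel(M_i)$ is generated in the single degree $d_i=\deg M_i$, since $L$-Borel moves preserve degree. Hence if $\mu\bt^{\bbeta}\in\K[\calI\bt]$ then $\deg\mu=\sum_i\beta_i d_i$, and \emph{every} monomial $\x^{\delta}\prod_{i,s}T_{m^{(i)}_st_i}$ lying over $\mu\bt^{\bbeta}$ under the Rees map $\phi\colon\K[\x,\bT]\to\K[\x,\bt]$ of Section~\ref{ss:multirees} has $|\delta|=\deg\mu-\sum_i\beta_id_i=0$. In other words $\K[\x,\bT]_{\mu\bt^{\bbeta}}=\K[\bT]_{\mu\bt^{\bbeta}}$ and the monomial $\nu$ of Definition~\ref{def:MultiSink} equals $1$; in particular the graphs $\vec{\Gamma}_{\mu\bt^{\bbeta},\calB'}$ and $\vec{\Gamma}_{\mu\bt^{\bbeta},\calB}$ — with $\calB$ the set of symmetric- \emph{and} fiber-type quadrics of Theorem~\ref{thm:MultiSink} — have the same vertex set. (If $\mu\bt^{\bbeta}\notin\K[\calI\bt]$ both graphs are empty and there is nothing to prove.)

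Next I would check that the two graphs are in fact identical on this common vertex set. No symmetric-type quadric $x_sT_{mt_i}-x_tT_{\frac{x_s}{x_t}mt_i}$ can divide a difference of two monomials of $\K[\bT]_{\mu\bt^{\bbeta}}$, since any monomial multiple of it has a term divisible by $x_s$, hence not lying in $\K[\bT]$; so $\calB$ contributes no edges beyond those of $\calB'$, while the $\calB'$-edges are visibly the same in both. Since the monomial order on $\K[\bT]$ is literally the restriction of that on $\K[\x,\bT]$, the edge directions agree as well, so $\vec{\Gamma}_{\mu\bt^{\bbeta},\calB}=\vec{\Gamma}_{\mu\bt^{\bbeta},\calB'}$. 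By Theorem~\ref{thm:MultiSink} the former is empty or has the unique sink $\bT^{\mu\bt^{\bbeta}}_{\min}$; hence so does the latter, and Proposition~\ref{prop:Graph} gives that $\calB'$ is a Gr\"obner basis for $J_{\K[\calI\bt]}$ under the induced order — a Gr\"obner basis of quadrics, by construction of $\calB'$.

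I expect no genuinely hard step here: the only point that warrants care is the one isolated in the second paragraph, namely that passing from $\K[\bT]$ to $\K[\x,\bT]$ adds neither new monomials to a fiber over an element of $\K[\calI\bt]$ nor new edges between them, which is exactly where single-degree generation is used. As the remark preceding the theorem indicates, one may instead re-run the proof of Theorem~\ref{thm:MultiSink} with $\nu\equiv 1$: the branch ``$x_j$ divides $\nu$'' — the only place the symmetric quadrics entered — is then vacuous, so from $\mu'_u\neq\mu_u$ one obtains a reverse $L_u$-Borel move producing $\frac{x_j}{x_h}m^{(u)}_1\in L_u$-$\Borel(M_u)$ with $x_j\mid\mu/\mu'_u$; since $\mu=\prod_i\mu'_i$, $x_j$ must divide $\mu'_v$ for some $v>u$, and $\mathsf{L}$-freeness of $\calI$ then supplies the bi-principal fiber quadric showing the putative sink is not a sink. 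Either route yields the theorem.
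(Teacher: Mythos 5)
Your proof is correct, and it takes a genuinely different (and arguably cleaner) route than the one the paper has in mind. The paper omits the proof, saying it is ``essentially a repeat of the proof of Theorem~\ref{thm:MultiSink} except we have no need of the quadrics of symmetric type'' --- that is, the authors intend for the reader to re-run the induction on the support of $\bbeta$, dropping the symmetric branch. You instead \emph{deduce} the result from Theorem~\ref{thm:MultiSink} used as a black box, via two clean observations: (1) because each $I_i$ is generated in the single degree $\deg M_i$, any monomial of $\K[\x,\bT]$ in the $\phi$-fiber over $\mu\bt^{\bbeta}\in\K[\calI\bt]$ has trivial $\x$-part, so the Rees fiber coincides with the multi-fiber-ring fiber $\K[\bT]_{\mu\bt^{\bbeta}}$; and (2) a symmetric quadric can never connect two monomials lying in $\K[\bT]$, since both of its terms (and any monomial multiple of them) are divisible by an $\x$-variable. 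Hence $\vec{\Gamma}_{\mu\bt^{\bbeta},\calB'}=\vec{\Gamma}_{\mu\bt^{\bbeta},\calB}$ as directed graphs, and Theorem~\ref{thm:MultiSink} plus Proposition~\ref{prop:Graph} finish the job. This reduction is more modular than a repeat of the induction, at the small cost of having to make the degree/$\nu=1$ observation explicit; you also correctly note at the end that the paper's re-run route works too, since $\nu\equiv 1$ makes the ``$x_j\mid\nu$'' branch vacuous. One tiny inaccuracy: when $\mu\bt^{\bbeta}\notin\K[\calI\bt]$ it is only the multi-fiber graph $\vec{\Gamma}_{\mu\bt^{\bbeta},\calB'}$ that must be empty --- the Rees graph $\vec{\Gamma}_{\mu\bt^{\bbeta},\calB}$ can be nonempty if $\mu\bt^{\bbeta}\in R[\calI\bt]\setminus\K[\calI\bt]$ --- but since Proposition~\ref{prop:Graph} is being applied to the map $\psi$, emptiness of the multi-fiber graph alone is what you need, so nothing breaks.
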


\begin{corollary}\label{cor:Koszul}
	Let $\calI$ be a collection of principal $L$-Borel ideals so that $\calI$ admits an $\mathsf{L}$-free ordering.  Then $R[\calI\bt]$ and $\K[\calI\bt]$ are Koszul, Cohen-Macaulay, and normal.
\end{corollary}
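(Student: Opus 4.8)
The plan is to read off all three properties from the quadratic Gr\"obner bases of Theorems~\ref{thm:MultiSink} and~\ref{thm:MultiFiber}, together with the fact that their lead terms are squarefree. First I would re-index $\calI$ so that the given ordering is $\mathsf{L}$-free (one exists by hypothesis) and, invoking Proposition~\ref{prop:multireduction}, replace each $I_i$ by the extension of $\Borel(M_i)$ from $\K[E_i]$ without changing $R[\calI\bt]$ or $\K[\calI\bt]$; this places us in the exact setting of Theorems~\ref{thm:MultiSink} and~\ref{thm:MultiFiber}. Those theorems say that, in the monomial order of Convention~\ref{conv:MonomialOrder}, the toric ideals $J_{R[\calI\bt]}\subset\K[\x,\bT]$ and $J_{\K[\calI\bt]}\subset\K[\bT]$ have quadratic Gr\"obner bases --- the symmetric and fiber quadrics, respectively the fiber quadrics. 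Since a standard graded $\K$-algebra presented as a quotient of a polynomial ring by an ideal with a quadratic Gr\"obner basis is Koszul \cite[Section~3.1]{CDR13}, both $R[\calI\bt]$ and $\K[\calI\bt]$ are Koszul.

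Next I would verify that the lead terms of these Gr\"obner bases are squarefree, arguing by type. For a symmetric quadric $x_sT_{mt_i}-x_tT_{\frac{x_s}{x_t}mt_i}$ one has $s<t$ (since $x_s>_{L_i}x_t$), hence $m\prec_\grevlex\frac{x_s}{x_t}m$, so $T_{\frac{x_s}{x_t}mt_i}$ is the largest variable occurring; as every $T$-variable exceeds every $x$-variable in Convention~\ref{conv:MonomialOrder}, the lead term is $x_tT_{\frac{x_s}{x_t}mt_i}$, a product of two distinct variables. For a principal fiber quadric $T_{mt_i}T_{nt_i}-T_{\frac{x_s}{x_t}mt_i}T_{\frac{x_t}{x_s}nt_i}$ the lead term is $T_{mt_i}T_{nt_i}$ by Theorem~\ref{thm:LexOrderReesAlgebraPrincipleBorel}, and $m\ne n$: if $m=n$ then $T_{mt_i}^2$ is the lex-least monomial in its fiber, since $(m,m)$ is the only Borel factorization of $m^2$ whose $\succeq_\grevlex$-largest factor equals $m$ and any other factorization has a strictly $\grevlex$-larger first factor --- so $T_{mt_i}^2$ is the unique sink and no such quadric occurs. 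Finally, a bi-principal fiber quadric has lead term $T_{mt_i}T_{nt_j}$ with $i<j$, squarefree because $t_i\ne t_j$. Hence $\mathrm{in}_\prec(J_{R[\calI\bt]})$ and $\mathrm{in}_\prec(J_{\K[\calI\bt]})$ are generated by squarefree monomials.

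Finally, $R[\calI\bt]$ and $\K[\calI\bt]$ are toric rings (the images of $\phi$ and of its restriction to $\K[\bT]$), so the existence of a squarefree initial ideal of the defining ideal forces the ring to be normal by \cite[Proposition~13.15]{S96}, and Hochster's theorem then gives that a normal affine semigroup ring is Cohen--Macaulay; alternatively one can first establish normality and Cohen--Macaulayness for $\K[\calI\bt]$ and transfer them to $R[\calI\bt]$ in the spirit of \cite[Theorem~5.1]{HHV05}. The one step that requires genuine care is the squarefreeness of the lead terms in the second paragraph --- in particular ruling out quadrics of the form $T_{mt_i}^2-(\cdots)$; everything else is a chain of citations, so I do not expect a serious obstacle.
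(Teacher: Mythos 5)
Your proof is correct and follows the same route as the paper's: re-order $\calI$ to make the given ordering $\mathsf{L}$-free, apply Theorems~\ref{thm:MultiSink} and~\ref{thm:MultiFiber} to get quadratic Gr\"obner bases (hence Koszulness), observe that the lead terms are squarefree, and conclude normality from~\cite[Proposition~13.15]{S96} and Cohen--Macaulayness from Hochster's theorem. The paper states the squarefree claim without elaboration, so your care there is a welcome addition.

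One imprecision in your second paragraph is worth flagging, though it does not affect the conclusion. You assert that the lead term of a principal fiber quadric is always $T_{mt_i}T_{nt_i}$ and that the case $m=n$ is excluded from $\calB$. In fact the paper's set $\calB$ places no constraint $m\ne n$, and the lex lead term may be the \emph{other} side of the binomial. For instance with $M=x_2^2$, $m=n=x_1x_2$, $s=1$, $t=2$, the quadric $T_{x_1x_2t_i}^2-T_{x_1^2t_i}T_{x_2^2t_i}$ lies in $\calB$ and its lex lead term is $T_{x_1^2t_i}T_{x_2^2t_i}$ (since $T_{x_1^2t_i}\succ T_{x_1x_2t_i}\succ T_{x_2^2t_i}$), which is squarefree even though the other side is a square. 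The general fact saving you is this: if $\{a,b\}$ and $\{c,d\}$ are two distinct unordered factorizations of the same monomial into factors of equal degree with $a=b$, then writing $a=\x^\alpha$, $c=\x^{\alpha+\delta}$, $d=\x^{\alpha-\delta}$ with $\delta\ne 0$ forces, up to relabeling, $c\succ_{\grevlex} a\succ_{\grevlex} d$, hence $T_cT_d\succ_{\lex}T_a^2$; so a square can never be the larger side. The same reasoning handles the potential square on the right-hand side of a principal fiber quadric. Thus every binomial in $\calB$ has squarefree lead term, and your citations to~\cite[Proposition~13.15]{S96} and Hochster go through exactly as in the paper.
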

\begin{proof}
	If necessary, re-order $\calI$ so that it is $\mathsf{L}$-free.  Applying Theorem~\ref{thm:MultiSink}, $J_{R[\calI\bt]}$ has a Gr\"obner basis of quadrics, hence $R[\calI\bt]$ is Koszul.  Furthermore, the leading terms of this Gr\"obner basis under the monomial order from Convention~\ref{conv:MonomialOrder} are squarefree.  Hence it follows from~\cite[Proposition~13.15]{S96} that $R[\calI\bt]$ is normal and thus by Hochster's well known result, $R[\calI\bt]$ is Cohen-Macaulay.   For the multi-fiber ring $\K[\calI\bt]$, first apply Theorem~\ref{thm:MultiFiber} and then use the same argument.
\end{proof}

\begin{corollary}\label{cor:PrincipalBorel}
	Let $\calI$ be a collection of principal Borel ideals.  Then $R[\calI\bt]$ and $\K[\calI\bt]$ are Koszul, Cohen-Macaulay, and normal.
\end{corollary}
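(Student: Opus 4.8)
The plan is to observe that this is an immediate specialization of Corollary~\ref{cor:Koszul}, so the entire proof reduces to checking that the hypotheses of that corollary are met. First I would recall that every principal Borel ideal is in particular a principal $L$-Borel ideal, namely for $L$ the full linear poset $x_n <_L x_{n-1} <_L \cdots <_L x_1$ on all of $\x$ (this is exactly the remark made after Example~\ref{ex:LBorelFamily}). Hence a collection $\calI = \{I_1,\ldots,I_r\}$ of principal Borel ideals is in particular a collection of principal $L$-Borel ideals in the sense of Section~\ref{sec:LBorel}, and the notions of essential variables and essential variables incidence graph $G(\calI)$ apply to it. Next I would invoke Proposition~\ref{prop:Borel}: since each $I_i$ is Borel, its set of essential variables $E_i$ is either empty or of the form $\{x_1,\ldots,x_{k_i}\}$, so after re-indexing the ideals so that $|E_1| \ge |E_2| \ge \cdots \ge |E_r|$ the bi-adjacency matrix of $G(\calI)$ is a Ferrers diagram and in particular contains no submatrix of type $\mathsf{L}$. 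Thus $\calI$ (with this re-ordering) is an $\mathsf{L}$-free ordered collection of principal $L$-Borel ideals.

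With that in hand, Corollary~\ref{cor:Koszul} applies verbatim and yields that both the multi-Rees algebra $R[\calI\bt]$ and the multi-fiber ring $\K[\calI\bt]$ are Koszul, Cohen-Macaulay, and normal. There is no genuine obstacle here: every ingredient has already been established earlier in the paper, and the only point requiring a sentence of justification is that principal Borel ideals furnish a collection of principal $L$-Borel ideals admitting an $\mathsf{L}$-free ordering, which is precisely the content of Proposition~\ref{prop:Borel}. (Alternatively, one could bypass Corollary~\ref{cor:Koszul} and chain Theorem~\ref{thm:MultiSink} and Theorem~\ref{thm:MultiFiber}, which give quadratic Gr\"obner bases with squarefree leading terms for $J_{R[\calI\bt]}$ and $J_{\K[\calI\bt]}$, with~\cite[Proposition~13.15]{S96} and Hochster's theorem; but routing through Corollary~\ref{cor:Koszul} avoids repeating that standard argument, so I would write the short proof.)
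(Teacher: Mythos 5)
Your argument is exactly the paper's: it routes through Proposition~\ref{prop:Borel} to supply an $\mathsf{L}$-free ordering and then applies Corollary~\ref{cor:Koszul}. This is correct and matches the published proof, which simply cites those two results.
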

\begin{proof}
	This is immediate from Corollary~\ref{cor:Koszul} and Proposition~\ref{prop:Borel}.
	%The essential variables incidence graph of a collection of principal Borel ideals is $\mathsf{L}$-free for any ordering of $\calI$.  In fact there is an ordering of $\calI$ whose essential variables incidence graph is also the incidence graph of a Ferrer's diagram (see Example~\ref{ex:FerrersDiagrams}).  Thus the result follows from Corollary~\ref{cor:Koszul}.
\end{proof}

\begin{corollary}\label{cor:bipperm}
	Let $\calI$ be a collection of principal $L$-Borel ideals so that each ideal of $\calI$ is also principal $L$-Borel under any permutation of $\x$.  If the essential variables incidence graph of $\calI$ is chordal bipartite then $R[\calI\bt]$ and $\K[\calI\bt]$ are Koszul, Cohen-Macaulay, and normal.
\end{corollary}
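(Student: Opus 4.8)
The plan is to reduce the statement to Corollary~\ref{cor:Koszul} by relabeling the $\x$-variables.  The only reason Corollary~\ref{cor:Koszul} does not apply on the nose is that it requires $\calI$ to admit an $\mathsf{L}$-free ordering, and although the chordal bipartite hypothesis on the essential variables incidence graph $G(\calI)$ guarantees, via Theorem~\ref{thm:ChordalBipartite}, that \emph{some} ordering of the vertex set $\x\sqcup\bt$ is $\mathsf{L}$-free, that ordering will in general permute the $\x$-variables and so need not respect the $L$-Borel structure of the ideals in $\calI$.  The hypothesis that each $I_i$ is principal $L$-Borel under every permutation of $\x$ is precisely what lets us absorb this permutation; it is a genuine extra condition, since relabeling a linear poset need not produce a linear poset in the sense of Definition~\ref{def:LinearPoset}.

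First I would invoke Theorem~\ref{thm:ChordalBipartite} to fix an $\mathsf{L}$-free ordering of $G(\calI)$, say $x_{a_1},\ldots,x_{a_n}$ on the $\x$-side and $t_{b_1},\ldots,t_{b_r}$ on the $\bt$-side.  Let $\rho$ be the automorphism of $\K[\x]$ sending $x_{a_i}\mapsto x_i$ and set $I_i'=\rho(I_i)$; by hypothesis each $I_i'$ is again principal $L$-Borel with respect to the standard decreasing order on $\x$, which is the order underlying Sections~\ref{sec:Borel}--\ref{sec:MultiRees}.  Since forming the minimal generating set and the two conditions defining an essential variable are all preserved by a relabeling, the set of essential variables of $I_j'$ is exactly $\rho(E_j)$, where $E_j$ denotes the essential variables of $I_j$ (so $I_j'$ is $L_{\rho(E_j)}$-Borel by Observation~\ref{obs:EssentialVariables}).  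Consequently the entry of the bi-adjacency matrix of $G(\calI')$ in the standard orderings is a $1$ in position $(x_k,t_j)$ exactly when $x_{a_k}\in E_j$; after additionally reordering the list of ideals as $\calI''=\{I_{b_1}',\ldots,I_{b_r}'\}$, the standard-ordered bi-adjacency matrix of $G(\calI'')$ coincides with the bi-adjacency matrix of the $\mathsf{L}$-free ordering of $G(\calI)$ fixed above.  Hence the standard ordering of $\calI''$ is an $\mathsf{L}$-free ordering, and $\calI''$ is a collection of principal $L$-Borel ideals admitting an $\mathsf{L}$-free ordering.

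Then I would apply Corollary~\ref{cor:Koszul} to $\calI''$ to conclude that $R[\calI''\bt]$ and $\K[\calI''\bt]$ are Koszul, Cohen-Macaulay, and normal.  Finally, relabeling the $\x$-variables by $\rho$ and permuting the list of ideals induce isomorphisms $R[\calI\bt]\cong R[\calI''\bt]$ and $\K[\calI\bt]\cong\K[\calI''\bt]$: the automorphism $\rho$ extends to $\K[\x,\bt]$ fixing each $t_i$ and carries $R[\calI\bt]$ onto $R[\{\rho(I_i)\}\bt]$, while reordering the ideals merely renames the auxiliary variables $t_i$.  Since each of the three properties is invariant under ring isomorphism, the corollary follows.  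The only real work is the bookkeeping in the middle paragraph — keeping straight how the permutation of $\x$ furnished by Theorem~\ref{thm:ChordalBipartite} transforms $G(\calI)$ into $G(\calI'')$ compatibly with the $\mathsf{L}$-free condition — together with the (minor) verification of the isomorphisms at the end; there is no genuine obstacle once the hypothesis is read as allowing us to relabel into the standard variable order.
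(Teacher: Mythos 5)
Your proposal is correct and takes essentially the same approach as the paper: invoke Theorem~\ref{thm:ChordalBipartite} to obtain an $\mathsf{L}$-free ordering, use the hypothesis to absorb the induced permutation of $\x$ into a relabeling of the ideals, observe that essential variables transform correspondingly so the relabeled collection admits an $\mathsf{L}$-free ordering in the standard sense, and apply Corollary~\ref{cor:Koszul} together with the fact that relabeling variables and reindexing the $t_i$ give ring isomorphisms preserving Koszulness, Cohen–Macaulayness, and normality. Your writeup just spells out the bookkeeping that the paper's terse proof leaves implicit.
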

\begin{proof}
	Since $G(\calI)$ is chordal bipartite, there is an $\mathsf{L}$-free ordering of its vertices.  Fix this ordering.  With respect to this re-ordering of the variables $\x$, every ideal in $\mathcal{I}$ is still principal $L$-Borel by assumption (the linear ordering has changed according to the permutation applied to $\x$).  Hence we now have an $\mathsf{L}$-free collection of principal $L$-Borel ideals, and we can apply Corollary~\ref{cor:Koszul}.
\end{proof}

\begin{corollary}
	Suppose that $\calI$ is a collection of ideals, each of which is a power of an ideal generated by variables.  If the essential variables graph of $\calI$ is chordal bipartite then $R[\calI\bt]$ and $\K[\calI\bt]$ are Koszul, Cohen-Macaulay, and normal.
\end{corollary}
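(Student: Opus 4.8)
The plan is to recognize that every power of an ideal generated by variables is a principal $L$-Borel ideal, to check that this remains true after any permutation of the variables, and then to quote Corollary~\ref{cor:bipperm} verbatim.

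First I would verify the basic identification. Suppose $J=(x_{j_1},\ldots,x_{j_k})^d$ with $j_1<\cdots<j_k$, and set $U=\{x_{j_1},\ldots,x_{j_k}\}$ and $L=L_U$. The minimal generators of $J$ are exactly the monomials of degree $d$ supported on $U$. Working inside the subring $\K[U]$, Lemma~\ref{lem:BorelMembership} shows that $x_{j_k}^d$ is the unique Borel-minimal monomial of degree $d$ (its cumulative exponents $\sigma_i$ are as small as possible among degree-$d$ monomials in $\K[U]$), so $\Borel(x_{j_k}^d)$, computed in $\K[U]$, is precisely the set of all degree-$d$ monomials in $U$. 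By Remark~\ref{rem:LBorelExt} this set coincides with $L$-$\Borel(x_{j_k}^d)$, so $J$ is the principal $L$-Borel ideal with $L$-Borel generator $x_{j_k}^d$. When $k=1$ the ideal is principal and this is trivial. As a byproduct, when $k\ge 2$ the essential variables of $J$ are exactly $U$: each $x_{j_\ell}$ divides the generator $x_{j_\ell}x_{j_k}^{d-1}$ but appears with exponent $0$ in $x_{j_m}^d$ for $m\ne\ell$; hence the essential variables incidence graph $G(\calI)$ is the graph the hypothesis of the corollary refers to.

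Next I would observe the permutation-invariance. For any permutation $\pi$ of $\x$, the ideal $\pi(J)=(x_{\pi(j_1)},\ldots,x_{\pi(j_k)})^d$ is again a power of an ideal generated by variables, hence by the previous paragraph it is principal $L'$-Borel for the linear poset $L'$ ordering $\{x_{\pi(j_1)},\ldots,x_{\pi(j_k)}\}$ by index. Applying this to each ideal of $\calI$, every ideal of $\calI$ is principal $L$-Borel under any permutation of $\x$, which is exactly the standing hypothesis of Corollary~\ref{cor:bipperm}.

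With these observations the result is immediate: since the essential variables incidence graph $G(\calI)$ is assumed chordal bipartite, Corollary~\ref{cor:bipperm} applies and yields that $R[\calI\bt]$ and $\K[\calI\bt]$ are Koszul, Cohen-Macaulay, and normal. There is no real obstacle here beyond the bookkeeping in the first step; everything substantive has already been done in Theorem~\ref{thm:MultiSink} and Corollary~\ref{cor:bipperm}.
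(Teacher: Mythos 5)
Your proposal is correct and follows the same route as the paper: identify a power of a variable ideal as a principal $L$-Borel ideal (with $L$-Borel generator the largest-indexed variable to the appropriate power), observe that this property persists under any re-ordering of the variables because the minimal generating set is \emph{all} degree-$d$ monomials in the support, and then invoke Corollary~\ref{cor:bipperm}. The paper's proof is terser (it asserts the permutation-invariance without the Lemma~\ref{lem:BorelMembership} computation), but the underlying argument is identical.
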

\begin{proof}
	Let $I=\{x_i:x_i\in E\}$ for some $E\subset \x$.  Then $I^k$ is principal $L_E$-Borel for any $k$ and any ordering of the variables $\x$.  The result now follows from Corollary~\ref{cor:bipperm}.
\end{proof}

\section{Concluding Remarks and Questions}\label{sec:Conclusion}
\begin{remark}
Algorithm~\ref{alg:BS} can be modified to produce unique sinks in the case that $\K[\x]$ has lexicographic instead of graded reverse lexicographic order.  Moreover, there is a much simpler algorithm than Algorithm~\ref{alg:BS} which produces the unique sink for $\vec{\Gamma}_{\mu,\calB}$ if \textit{graded reverse lexicographic order} is used on $\K[\bT]$ instead of lexicographic order.  This comes at the cost of leading terms which are no longer squarefree.
\end{remark}

\begin{question}
	If $I$ is a principal Borel ideal, is the set of binomials indicated in~\eqref{eq:ReesAlgebraGB} a \textit{universal} Gr\"obner basis for the toric ideal $J_{\K[I]}$?
\end{question}

\begin{remark}
	It follows from Corollary~\ref{cor:Koszul} that the multi-Rees algebra of a collection of principal $L$-Borel ideals which admits an $\mathsf{L}$-free ordering is Koszul.  We do not know if this is a necessary condition for the multi-Rees algebra of a collection of principal $L$-Borel ideals to be Koszul.
\end{remark}

\begin{remark}
	If $\calI=\{I_1,\ldots,I_k\}$ is an arbitrary collection of principal $L$-Borel ideals, then the multi-Rees algebra $R[\calI\bt]$ is always Cohen-Macaulay and normal. We can see this as follows. From~\cite[Proposition~2.9]{FMS13} we obtain that principal $L$-Borel ideals are polymatroidal.  The multi-Rees algebra of polymatroidal ideals is Cohen-Macaulay and normal by~\cite[Theorem~5.4]{BC17}. 
	Since principal $Q$-Borel ideals (see Remark~\ref{rem:QBorel}) are also polymatroidal, the multi-Rees algebra of a collection of principal $Q$-Borel ideals is also Cohen-Macaulay and normal.  $Q$-Borel ideals are introduced in~\cite{FMS13}; they interpolate between arbitrary monomial ideals and Borel ideals. 
\end{remark}

%To our knowledge, the Koszul property of the Rees algebra of a principal $Q$-Borel ideal has not been investigated.  We leave this as a question for the interested reader.

\begin{question}\label{ques:PrincipalQBorel}
	Does the defining equations of the Rees algebra of a principal $Q$-Borel ideal have a Gr\"obner basis of quadrics?
\end{question}

\begin{remark}\label{rem:WhiteConjecture}
	Since principal $Q$-Borel ideals are polymatroidal, Question~\ref{ques:PrincipalQBorel} is a special case of a question of Herzog and Hibi -- namely whether the toric ideal of a polymatroidal ideal has a Gr\"obner basis of quadrics~\cite{HH02}.  This in turn is an extension of a conjecture of White that the base ring of a matroid has a defining ideal generated by quadrics~\cite{W80}.
\end{remark}

%Remark about characteristic p here (taken out since we have removed characteristic 0 assumption)

\begin{question}
	Suppose we are given a collection $\calI=\{I_1,\ldots,I_r\}$ of monomial ideals in $\K[\x]$ so that $J_{\K[I_it]}$ has a Gr\"obner basis of quadrics with respect to a monomial order on $\K[\x]$ which is fixed for $i=1,\ldots,r$.  Suppose additionally that $\calI$ is a subset of a sufficiently combinatorial family of ideals, such as Lex segment ideals, ideals of Veronese type, polymatroidal ideals, etc.  Is there an appropriate incidence condition -- perhaps depending on the family of ideals of which $\calI$ is a subset -- so that $J_{\K[\calI\bt]}$ and $J_{R[\calI\bt]}$ also have a Gr\"obner basis of quadrics?  (If $\calI$ is a collection of principal $L$-Borel ideals then the appropriate incidence condition is that $\calI$ is $\mathsf{L}$-free.)
\end{question}

%Is there an incidence condition for a collection of polymatroidal ideals which is an appropriate replacement for the $\mathsf{L}$-free condition for a collection of principal $L$-Borel ideals?  If so, one could extend the question of Herzog and Hibi from Remark~\ref{rem:WhiteConjecture} to the multi-Rees algebra of a collection of polymatroidal ideals satisfying an appropriate incidence condition.

%What about extensions of the sorting algorithm to lex segment ideals, ideals of Veronese type, etc.?  Can a similar argument be used to show the multi-Rees algebras are Koszul?

%\bibliography{ReesBib.bib}

\begin{thebibliography}{10}
	
	\bibitem{Blasiak08}
	Jonah Blasiak.
	\newblock The toric ideal of a graphic matroid is generated by quadrics.
	\newblock {\em Combinatorica}, 28(3):283--297, 2008.
	
	\bibitem{BC17}
	Winfried Bruns and Aldo Conca.
	\newblock Linear resolutions of powers and products.
	\newblock In {\em Singularities and computer algebra}, pages 47--69. Springer,
	Cham, 2017.
	
	\bibitem{BC17b}
	Winfried Bruns and Aldo Conca.
	\newblock Products of {B}orel fixed ideals of maximal minors.
	\newblock {\em Adv. in Appl. Math.}, 91:1--23, 2017.
	
	\bibitem{CDR13}
	Aldo Conca, Emanuela De~Negri, and Maria~Evelina Rossi.
	\newblock Koszul algebras and regularity.
	\newblock In {\em Commutative algebra}, pages 285--315. Springer, New York,
	2013.
	
	\bibitem{CN09}
	Alberto Corso and Uwe Nagel.
	\newblock Monomial and toric ideals associated to {F}errers graphs.
	\newblock {\em Trans. Amer. Math. Soc.}, 361(3):1371--1395, 2009.
	
	\bibitem{CLS19}
	David~A. Cox, Kuei-Nuan Lin, and Gabriel Sosa.
	\newblock Multi-{R}ees algebras and toric dynamical systems.
	\newblock {\em Proc. Amer. Math. Soc.}, 147(11):4605--4616, 2019.
	
	\bibitem{DN99}
	Emanuela De~Negri.
	\newblock Toric rings generated by special stable sets of monomials.
	\newblock {\em Math. Nachr.}, 203:31--45, 1999.
	
	\bibitem{DFMSS19}
	Michael DiPasquale, Christopher~A. Francisco, Jeffrey Mermin, Jay Schweig, and
	Gabriel Sosa.
	\newblock The {R}ees algebra of a two-{B}orel ideal is {K}oszul.
	\newblock {\em Proc. Amer. Math. Soc.}, 147(2):467--479, 2019.
	
	\bibitem{EisenbudHunekeUlrich03}
	David Eisenbud, Craig Huneke, and Bernd Ulrich.
	\newblock What is the {R}ees algebra of a module?
	\newblock {\em Proc. Amer. Math. Soc.}, 131(3):701--708, 2003.
	
	\bibitem{FMS11}
	Christopher~A. Francisco, Jeffrey Mermin, and Jay Schweig.
	\newblock Borel generators.
	\newblock {\em J. Algebra}, 332:522--542, 2011.
	
	\bibitem{FMS13}
	Christopher~A. Francisco, Jeffrey Mermin, and Jay Schweig.
	\newblock Generalizing the {B}orel property.
	\newblock {\em J. Lond. Math. Soc. (2)}, 87(3):724--740, 2013.
	
	\bibitem{M2}
	Daniel~R. Grayson and Michael~E. Stillman.
	\newblock Macaulay2, a software system for research in algebraic geometry.
	\newblock Available at \url{http://www.math.uiuc.edu/Macaulay2/}.
	
	\bibitem{HH02}
	J\"{u}rgen Herzog and Takayuki Hibi.
	\newblock Discrete polymatroids.
	\newblock {\em J. Algebraic Combin.}, 16(3):239--268 (2003), 2002.
	
	\bibitem{HHO18}
	J\"{u}rgen Herzog, Takayuki Hibi, and Hidefumi Ohsugi.
	\newblock {\em Binomial ideals}, volume 279 of {\em Graduate Texts in
		Mathematics}.
	\newblock Springer, Cham, 2018.
	
	\bibitem{HHV05}
	J\"{u}rgen Herzog, Takayuki Hibi, and Marius Vladoiu.
	\newblock Ideals of fiber type and polymatroids.
	\newblock {\em Osaka J. Math.}, 42(4):807--829, 2005.
	
	\bibitem{HKS85}
	A.~J. Hoffman, A.~W.~J. Kolen, and M.~Sakarovitch.
	\newblock Totally-balanced and greedy matrices.
	\newblock {\em SIAM J. Algebraic Discrete Methods}, 6(4):721--730, 1985.
	
	\bibitem{Jb18}
	Babak Jabarnejad.
	\newblock Equations defining the multi-{R}ees algebras of powers of an ideal.
	\newblock {\em J. Pure Appl. Algebra}, 222(7):1906--1910, 2018.
	
	\bibitem{LP14}
	Kuei-Nuan Lin and Claudia Polini.
	\newblock Rees algebras of truncations of complete intersections.
	\newblock {\em J. Algebra}, 410:36--52, 2014.
	
	\bibitem{LS18}
	Kuei-Nuan Lin and Yi-Huang Shen.
	\newblock Koszul blowup algebras associated to three-dimensional {F}errers
	diagrams.
	\newblock {\em J. Algebra}, 514:219--253, 2018.
	
	\bibitem{MS05}
	Ezra Miller and Bernd Sturmfels.
	\newblock {\em Combinatorial commutative algebra}, volume 227 of {\em Graduate
		Texts in Mathematics}.
	\newblock Springer-Verlag, New York, 2005.
	
	\bibitem{NR09}
	Uwe Nagel and Victor Reiner.
	\newblock Betti numbers of monomial ideals and shifted skew shapes.
	\newblock {\em Electron. J. Combin.}, 16(2, Special volume in honor of Anders
	Bj\"{o}rner):Research Paper 3, 59, 2009.
	
	\bibitem{OH99}
	Hidefumi Ohsugi and Takayuki Hibi.
	\newblock Koszul bipartite graphs.
	\newblock {\em Adv. in Appl. Math.}, 22(1):25--28, 1999.
	
	\bibitem{R99}
	J.~Ribbe.
	\newblock On the defining equations of multi-graded rings.
	\newblock {\em Comm. Algebra}, 27(3):1393--1402, 1999.
	
	\bibitem{Schweig11}
	Jay Schweig.
	\newblock Toric ideals of lattice path matroids and polymatroids.
	\newblock {\em J. Pure Appl. Algebra}, 215(11):2660--2665, 2011.
	
	\bibitem{SimisUlrichVasconcelos03}
	Aron Simis, Bernd Ulrich, and Wolmer~V. Vasconcelos.
	\newblock Rees algebras of modules.
	\newblock {\em Proc. London Math. Soc. (3)}, 87(3):610--646, 2003.
	
	\bibitem{Sosa14}
	Gabriel {Sosa}.
	\newblock {On the Koszulness of multi-Rees algebras of certain strongly stable
		ideals}.
	\newblock {\em arXiv e-prints}, page arXiv:1406.2188, June 2014.
	
	\bibitem{S96}
	Bernd Sturmfels.
	\newblock {\em Gr\"{o}bner bases and convex polytopes}, volume~8 of {\em
		University Lecture Series}.
	\newblock American Mathematical Society, Providence, RI, 1996.
	
	\bibitem{W80}
	Neil~L. White.
	\newblock A unique exchange property for bases.
	\newblock {\em Linear Algebra Appl.}, 31:81--91, 1980.
	
\end{thebibliography}
%\bibliographystyle{plain}

\end{document}